\documentclass[UTF8,10pt]{article}

\usepackage[left=1.91cm,right=1.91cm,top=2.54cm,bottom=2.54cm]{geometry}
\usepackage{amsfonts}
\usepackage{amsmath}
\usepackage{amssymb}
\usepackage{pgfplots}
\usepackage{titlesec}
\usepackage{bm}
\usepackage{appendix}
\usepackage{mathtools}
\usepackage{amsthm}
\usepackage{enumitem}
\usepackage[colorlinks,linkcolor=black,anchorcolor=black,citecolor=black]{hyperref}
\usepackage[numbers]{natbib}
\usepackage{setspace}
\usepackage{mathrsfs}
\usepackage{cases}
\usepackage{fancyhdr}


\usepackage{txfonts}

\theoremstyle{definition}
\newtheorem{thm}{Theorem}[section]

\newtheorem{lem}[thm]{Lemma}
\newtheorem{prop}[thm]{Proposition}

\newtheorem*{rmk}{Remark}

\newcommand{\R}{\mathbb{R}}
\newcommand{\Z}{\mathbb{Z}}

\newcommand{\N}{\mathbb{N}}

\newcommand{\T}{\mathbb{T}}

\newcommand{\TP}{\overline{\partial}{}}

\newcommand{\dive}{\text{div }}
\newcommand{\bp}{(b_0\cdot\partial)}
\newcommand{\bzp}{(\bz\cdot\nab)}

\newcommand{\q}{\quad}
\newcommand{\p}{\partial}
\newcommand{\dd}{\mathfrak{D}}
\newcommand{\DD}{\mathcal{D}}

\newcommand{\nab}{\nabla}
\newcommand{\pa}{\nabla_A}
\newcommand{\pA}{\nabla_{\hat{A}}}
\newcommand{\diva}{\text{div}_{A}}

\newcommand{\sh}{\sharp}
\newcommand{\eps}{\varepsilon}

\newcommand{\cnab}{\overline{\nab}}
\newcommand{\dx}{\,\mathrm{d}x}
\newcommand{\dy}{\,\mathrm{d}y}

\newcommand{\dt}{\,\mathrm{d}t}
\newcommand{\dS}{\,\mathrm{d}S}
\newcommand{\dyy}{\,\mathrm{d}y'}
\newcommand{\ddt}{\frac{\mathrm{d}}{\mathrm{d}t}}

\newcommand{\lee}{\langle}
\newcommand{\ree}{\rangle}

\newcommand{\EE}{\mathcal{E}}

\newcommand{\ff}{\mathcal{F}}

\newcommand{\VV}{\mathbf{V}}
\newcommand{\NN}{\mathcal{N}}
\newcommand{\MM}{\mathcal{M}}
\newcommand{\RR}{\mathcal{R}}
\newcommand{\QQ}{\mathbf{Q}}
\newcommand{\GG}{\mathbf{G}}
\newcommand{\FF}{\mathbf{F}}

\newcommand{\PP}{\mathcal{P}}

\newcommand{\ww}{\textbf{w}_0}
\newcommand{\vv}{\textbf{v}_0}
\newcommand{\pp}{\textbf{P}_0}
\newcommand{\qq}{\textbf{Q}_0}
\newcommand{\bz}{\textbf{b}_0}

\newcommand{\idt}{\int_{\mathcal{D}_t}}
\newcommand{\ipdt}{\int_{\partial\mathcal{D}_t}}
\newcommand{\io}{\int_{\Omega}}

\newcommand{\ig}{\int_{\Gamma}}
\numberwithin{equation}{section}
\setcounter{secnumdepth}{4}
\setcounter{tocdepth}{4}
\usepackage{xcolor}

\begin{document}
\bibliographystyle{plain}
\title{\textbf{Anisotropic Regularity of the Free-Boundary Problem in \\ Compressible Ideal Magnetohydrodynamics}}
\author{
{\sc Hans Lindblad}\thanks{Department of Mathematics, Johns Hopkins University, 3400 N Charles St, Baltimore, MD 21218, USA. Email: \texttt{lindblad@math.jhu.edu}}
\and
{\sc Junyan Zhang}\thanks{Department of Mathematics, National University of Singapore, 10 Lower Kent Ridge Road, Singapore 119076, Singapore. Email: \texttt{zhangjy@nus.edu.sg}}
 }
\date{\today}
\maketitle

\setcounter{tocdepth}{1}

\begin{abstract}
We consider 3D free-boundary compressible ideal magnetohydrodynamic (MHD) system under the Rayleigh-Taylor sign condition. It describes the motion of a free-surface perfect conducting fluid in an electro-magnetic field. A local existence and uniqueness result was recently proved by Trakhinin and Wang \cite{TW2020MHDLWP} by using Nash-Moser iteration. However, that result loses regularity going from data to solution. In this paper, we show that the Nash-Moser iteration scheme in \cite{TW2020MHDLWP} can be improved such that the local-in-time \textit{smooth} solution exists and is unique when the initial data is \textit{smooth} and satisfies the compatibility condition up to \textit{infinite} order. Second, we prove the a priori estimates without loss of regularity for the free-boundary compressible MHD system in Lagrangian coordinates in anisotropic Sobolev space, with more regularity tangential to the boundary than in the normal direction. It is based on modified Alinhac good unknowns, which take into account the covariance under the change of coordinates to avoid the derivative loss; full utilization of the cancellation structures of MHD system, to turn normal derivatives into tangential ones; and delicate analysis in anisotropic Sobolev spaces. As a result, we can prove the uniqueness and the continuous dependence on initial data provided the local existence, and a continuation criterion for smooth solution. Finally, we extend the local well-posedness theorem to the case of initial data only satisfying compatibility conditions up to finite order, assuming these can be approximated by data satisfying infinitely many compatibility conditions. 
\end{abstract}

\tableofcontents

\section{Introduction}
In this paper, we consider the 3D compressible ideal magnetohydrodynamics (MHD) equations
\begin{equation}
\begin{cases}
\rho D_t u=B\cdot\nabla B-\nabla P,~~P:=p+\frac{1}{2}|B|^2~~~& \text{in}~\DD; \\
D_t\rho+\rho(\nabla\cdot u)=0~~~&\text{in}~\DD; \\
D_t B=B\cdot\nabla u-B(\nabla\cdot u),~~~&\text{in}~\DD; \\
\nabla\cdot B=0~~~&\text{in}~\DD,
\end{cases}\label{CMHD}
\end{equation}
describing the motion of a compressible conducting fluid in an electro-magnetic field. Here $\DD:={\bigcup}_{0\leq t\leq T}\{t\}\times \DD_t$ and $\DD_t\subset \R^3$ is the domain occupied by the conducting fluid whose boundary $\p\DD_t$ moves with the velocity of the fluid. $\nabla:=(\p_{x_1},\p_{x_2},\p_{x_3})$ is the standard spatial derivative and $\dive X:=\p_{x_i} X^i$ is the standard divergence for any vector field $X$. $D_t:=\p_t+u\cdot\nabla$ is the material derivative. Throughout this paper, $X^i=\delta^{li}X_l$ for any vector field $X$, i.e., we use Einstein summation convention. The fluid velocity $u=(u_1,u_2,u_3)$, the magnetic field $B=(B_1,B_2,B_3)$, the fluid density $\rho$, the fluid pressure $p$ and the domain $\DD\subseteq[0,T]\times\R^3$ are to be determined. Here we consider the isentropic case, and thus the fluid pressure $p=p(\rho)$ should be a given strictly increasing smooth function of the density $\rho$.

\subsection{Initial and boundary conditions and constraints}

The boundary conditions of \eqref{CMHD} are
\begin{equation}\label{MHDB}
\begin{cases}
D_t|_{\partial\DD}\in \mathcal{T}(\partial\DD) \\
P=0&\text{on}~\partial\DD, \\
B\cdot n=0&\text{on}~\partial\DD,
\end{cases}
\end{equation} where $\mathcal{T}(\p\DD)$ denotes the tangent bundle of $\p\DD$ and $n$ denotes the unit exterior normal vector to $\p\DD_t$. The first condition in \eqref{MHDB} means that the boundary moves with the velocity of the fluid. It can be equivalently rewritten as ``$V(\p\DD_t)=u\cdot n$ on $\p\DD$" or ``$(1,u)$ is tangent to $\p\DD$". The second condition in \eqref{MHDB} means that outside the fluid region $\DD_t$ is the vacuum. The third boundary condition $B\cdot n=0$ shows that the fluid is a perfect conductor.

\begin{rmk} The conditions $\nabla\cdot B=0$ in $\DD$ and $B\cdot n=0$ on $\p\DD$ are both constraints only for initial data so that the system is not over-determined. They can propagate to any time $t>0$ if initially hold. See Hao-Luo \cite{HaoLuo2014priori} for details.
\end{rmk}

We consider the Cauchy problem of \eqref{CMHD}: Given a bounded domain $\DD_0\subset \R^3$ and the initial data $u_0$, $\rho_0$ and $B_0$ satisfying the constraints $\nabla\cdot B_0=0$ in $\DD_0$ and $(B_0\cdot n)|_{\{0\}\times\p\DD_0}=0$, we want to find a set $\DD$, the vector field $u$, the magnetic field $B$, and the density $\rho$ solving \eqref{CMHD} satisfying the boundary conditions \eqref{MHDB} and the initial data
\begin{equation}\label{MHDI}
\DD_0=\{x: (0,x)\in \DD\},\q (u,B,\rho)=(u_0, B_0,\rho_0),\q \text{in}\,\,\{0\}\times \DD_0,
\end{equation}

\paragraph*{Energy conservation law}

The free-boundary compressible MHD system \eqref{CMHD} together with the boundary conditions \eqref{MHDB} satisfies the following energy conservation law. Let $Q(\rho)=\int_1^{\rho} p(R)/R^2 dR$, then we use \eqref{CMHD} to get
\begin{equation}\label{conserve1}
\begin{aligned}
&~~~~\ddt\left(\frac{1}{2}\idt\rho |u|^2\dx+\frac{1}{2}\idt|B|^2 \dx +\idt \rho Q(\rho)\dx\right)  \\
&=\idt \rho u\cdot D_tu\dx+\idt B\cdot D_t B\dx+\idt \rho D_t Q(\rho)\dx+\frac{1}{2}\idt \rho D_t(1/\rho)|B|^2\dx\\
&=\idt u\cdot (B\cdot\nabla B)\dx-\idt u\cdot\nabla P \dx+\idt B\cdot(B\cdot\nabla u)\dx-\idt |B|^2(\nabla\cdot u)\dx \\
&~~~~+\idt p(\rho) \frac{D_t \rho}{\rho} \dx -\frac{1}{2}\idt \frac{D_t \rho}{\rho}|B|^2\dx.
\end{aligned}
\end{equation}

Integrating by part in the first term in the last equality, this term will cancel with $\idt B\cdot(B\cdot\nabla u)\dx$ because the boundary term and the other interior term vanish due to $B\cdot n|_{\p\DD_t}=0$ and $\dive B=0$. Also we integrate by parts in the second term and then use the continuity equation to get
\begin{equation}\label{conserve2}
\begin{aligned}
-\idt u\cdot\nabla P\dx&=\idt P(\nabla\cdot u)\dx-\underbrace{\ipdt (u\cdot N)P \dS}_{=0}=-\idt p\frac{D_t \rho}{\rho} \dx +\frac{1}{2}\idt |B|^2(\nabla\cdot u)\dx\\
&=-\idt p\frac{D_t \rho}{\rho} \dx +\idt |B|^2(\nabla\cdot u)\dx -\frac{1}{2}\idt |B|^2(\nabla\cdot u)\dx \\
&=-\idt p\frac{D_t \rho}{\rho} \dx +\idt |B|^2(\nabla\cdot u)\dx+\frac{1}{2}\idt \frac{D_t \rho}{\rho}|B|^2\dx.
\end{aligned}
\end{equation}where $dS$ is the surface measure of $\p\DD_t$.

Summing up \eqref{conserve1} and \eqref{conserve2}, one can get the energy conservation
\begin{equation}\label{conserve}
\ddt\left(\frac{1}{2}\idt\rho |u|^2\dx+\frac{1}{2}\idt|B|^2 \dx +\idt \rho Q(\rho)\dx\right)  =0.
\end{equation} When $B=\mathbf{0}$, one can see such energy conservation exactly coincides with that of free-boundary compressible Euler equations established in Lindblad-Luo \cite{LL2018priori}.

\paragraph*{Equation of state: Isentropic liquid}

 We assume the fluid considered in this paper is an isentropic liquid, i.e., there exists some constant $0<\rho_1<\rho_2$ such that $\rho_1\leq \rho\leq \rho_2$ as opposed to a gas\footnote{In the case of a gas, the boundary condition should be $\rho=0$.}, and the fluid pressure $p=p(\rho)$ is an increasing smooth function of $\rho$. Next we impose the following natural conditions on $\rho'(p)$ for some fixed constant $A_0>1$.
\begin{equation}\label{weight}
A_0^{-1}\leq|\rho^{(m)}(p)|\leq A_0~~\text{ for }m\geq 1.
\end{equation}For example, the equation of state $p(\rho)=\gamma^{-1}(\rho^\gamma-1)/c^2~(\gamma\geq 1)$ satisfies this relation.

\paragraph*{Rayleigh-Taylor sign condition}

We also need to impose the Rayleigh-Taylor sign condition
\begin{equation}\label{sign}
-\nabla_n P\geq c_0>0~~\text{ on }\p\DD_t,
\end{equation} where $c_0>0$ is a constant and $P:=p+\frac{1}{2}|B|^2$ is the total pressure. When $B=\mathbf{0}$, Ebin \cite{Ebin1987} proved the ill-posedness of the free-boundary incompressible Euler equations when the Rayleigh-Taylor sign condition is violated. For the free-boundary MHD equations, \eqref{sign} is also necessary: Hao-Luo \cite{HaoLuo2018ill} proved that the free-boundary problem of 2D incompressible MHD equations is ill-posed when \eqref{sign} fails. We also note that \eqref{sign} is only required for initial data and it propagates in a short time interval because one can prove it is $C_{t,x}^{0,\frac14}$ H\"older continuous by using Morrey's embedding. 

\paragraph*{Compatibility conditions on initial data}

To make the initial-boundary value problem \eqref{CMHD}-\eqref{MHDI} well-posed, the initial data has to satisfy certain compatibility conditions on the boundary. In fact, we need to require $P_0|_{\p\DD_0}$=0. Also the constraints on the magnetic field $\dive B=0$ and $B\cdot n|_{\p\DD}=\mathbf{0}$ requires that $\nabla\cdot B_0=0$ and $B_0\cdot n|_{\{0\}\times\p \DD_0}=\mathbf{0}$. Furthermore, we say the initial data satisfies the compatibility condition up to $k$-th($k\geq 0$) order if
\begin{equation}\label{cck}
D_t^j P|_{\{t=0\}\times\p\DD_0}=0~~\forall 0\leq j\leq k.
\end{equation}When \eqref{cck} is fulfilled for any $j\in\N$, we say the initial data satisfies the compatibility conditions to infinite order.


\subsection{History and background}

\subsubsection{Background in physics}

 The free-boundary problem \eqref{CMHD}-\eqref{MHDI} can be considered as the basic model of the plasma-vacuum free-interface problem which is important in the study of confined plasma both in laboratory and in astro-physical magnetohydrodynamics. The plasma is confined in a vacuum in which there is another magnetic field $\hat{B}$, and there is a free interface $\Gamma(t)$, moving with the motion of plasma, between the plasma region $\Omega_+(t)$ and the vacuum region $\Omega_{-}(t)$. This model requires that \eqref{CMHD} holds in the plasma region $\Omega_+(t)$ and the pre-Maxwell system holds in vacuum $\Omega_{-}(t)$:
\begin{equation}\label{outsideB}
\nabla\times\hat{B}=\mathbf{0},~~~\nabla\cdot\hat{B}=0.
\end{equation}
On the interface $\Gamma(t)$, it is required that there is no jump for the pressure or the normal components of the magnetic fields:
\begin{equation}\label{interface}
B\cdot n=\hat{B}\cdot n=0,~~~P:=p+\frac12|B|^2=\frac12|\hat{B}|^2
\end{equation}  where $n$ is the exterior unit normal to $\Gamma(t)$.  Finally, there is a rigid wall $W$ wrapping the vacuum region, on which the following boundary condition holds
\[
\hat{B}\times \hat{n}=\mathbf{J}~~\text{ on }W,
\] where $\mathbf{J}$ is the given outer surface current density (as an external input of energy) and $\hat{n}$ is the exterior normal to the rigid wall $W$. Note that for ideal MHD, the conditions $\dive B=0$ and $B\cdot n=0$ should also be constraints for initial data instead of imposed conditions. For details we refer to \cite[Chapter 4, 6]{MHDphy}.

Hence, the free-boundary problem \eqref{CMHD}-\eqref{MHDI} can be considered as a special case of plasma-vacuum model that the vacuum magnetic field $\hat{B}$ vanishes. It characterizes the motion of an isolated perfect conducting fluid in an electro-magnetic field.

\subsubsection{An overview of previous results}

In the past a few decades, there have been numerous studies of the free-boundary inviscid fluids. We start with incompressible Euler equations.

\paragraph*{Free-boundary Euler equations}

The free-boundary Euler equations have been studied intensively by a lot of authors. The first breakthrough in solving the local well-posedness (LWP) for the incompressible irrotational problem for general initial data came in the work of Wu \cite{Wu1997LWP, Wu1999LWP} who proved the LWP of 2D and 3D full water wave system. In the case of nonzero vorticity, Christodoulou-Lindblad \cite{CL2000priori} first established the a priori estimates and then Lindblad \cite{Lindblad2003LLWP1, Lindblad2005LWP1} proved the LWP by using Nash-Moser iteration. Coutand-Shkoller \cite{CS2007LWP, CS2010LWP} proved the LWP for incompressible Euler equations with or without surface tension and avoid the loss of regularity by introducing tangential smoothing method. We also refer to the related works \cite{ZZ2008LWP,ABZ2014LWP,SZ1,SZ2,SZ3} and references therein.

The study of compressible perfect fluid is not quite developed as opposed to the incompressible case. Lindblad \cite{Lindblad2003LLWP2, Lindblad2005LWP2} established the first LWP result by Nash-Moser iteration. Trakhinin \cite{Trakhinin2009gas} proved the LWP for the non-isentropic case by a hyperbolic approach and Nash-Moser iteration. Lindblad-Luo \cite{LL2018priori} established the first result of the a priori estimates and the incompressible limit. Then Luo \cite{Luo2018CWW} generalized \cite{LL2018priori} to compressible water wave with vorticity. Later, Ginsberg-Lindblad-Luo \cite{GLL2019LWP} proved the LWP for a self-gravitating liquid. Luo-Zhang \cite{LuoZhang2020CWW} proved the LWP for a compressible gravity water wave with vorticity. In the case of nonzero surface tension, we refer to Coutand-Hole-Shkoller \cite{CHS2013LWP} for the LWP and the vanishing surface tension limit and Disconzi-Luo \cite{DL2019limit} for the incompressible limit. For the case of a gas, we refer to \cite{CLS2010priori,CS2012LWP,Jang2014gas,LuoXinZeng2014gas,Hao2015gas,IT2020gas} and references therein.

\paragraph*{Free-boundary MHD equations: Incompressible case}

The study of free-boundary MHD is much more complicated than Euler equations due to the strong coupling between fluid and magnetic field and the failure of irrotational assumption. For the incompressible ideal free-boundary MHD under Rayleigh-Taylor sign condition, Hao-Luo \cite{HaoLuo2014priori} established the Christodoulou-Lindblad \cite{CL2000priori} type a priori estimates and Gu-Wang \cite{GuWang2016LWP} proved the LWP. Hao-Luo \cite{HaoLuo2019LLWP} also proved the LWP for the linearized problem when the fluid region is diffeomorphic to a ball and of large curvature. Luo-Zhang \cite{LuoZhang2019MHD2.5} proved the low regularity a priori estimates when the fluid domain is small. We also mention that Lee \cite{LeeMHD1,LeeMHD2} obtained a local solution via the vanishing viscosity-resistivity limit.

For the full plasma-vacuum model, Gu \cite{Guaxi1,Guaxi2} proved the LWP for the axi-symmetric case with nontrivial vacuum magnetic field in a non-simply connected domain under Rayleigh-Taylor sign condition. Hao \cite{Hao2017MHD} proved the LWP in the case of $\mathbf{J}=\mathbf{0}$. For the general case, all of the previous results require a non-collinearity condition $|B\times\hat{B}|\geq c_0>0$ on the free interface\footnote{The non-collinearity condition enhaces extra 1/2-order regularity of the free-interface than Taylor sign condtion \eqref{sign}. Such condition originates from the stabilization condition for the current-vortex sheet model.}. Under this condition, Morando-Trakhinin-Trebeschi \cite{MTT2014MHDLLWP} proved LWP for the linearized problem and then Sun-Wang-Zhang \cite{SWZ2017MHDLWP} proved the LWP for the full plasma-vacuum model. We also note that the study of the full plasma-vacuum model in ideal MHD under Rayleigh-Taylor sign condition is still an open problem when the vacuum magnetic field $\hat{B}$ is non-trivial with $\mathbf{J}\neq\mathbf{0}$. For the incompressible current-vortex sheets, we refer to Coulombel-Morando-Secchi-Trebeschi \cite{CMST2012MHDVS} for the a priori estimates and Sun-Wang-Zhang \cite{SWZ2015MHDLWP} for the LWP.

For incompressible ideal MHD with surface tension, Luo-Zhang \cite{LuoZhang2019MHDST} proved the a priori estimates and Gu-Luo-Zhang \cite{GuLuoZhang2021MHDST} proved the LWP. For incompressible dissipative MHD with surface tension, we refer to Chen-Ding \cite{ChenDingMHDST} for the inviscid limit for viscous non-resistive MHD, Wang-Xin \cite{WangXinMHDST} for the global well-posedness of the plasma-vacuum model for inviscid resistive MHD around a uniform transversal magnetic field, and Padula-Solonnikov \cite{Solonnikov} and Guo-Zeng-Ni \cite{GuoMHDSTviscous} for viscous-resistive MHD.

\paragraph*{Free-boundary MHD equations: Compressible case}

Compared with compressible Euler equations and incompressible MHD, compressible MHD has an extra coupling between the pressure wave and the magnetic field which makes the analysis completely different. Here we emphasize that there is a normal derivative loss in the div-curl analysis of compressible MHD. On the one hand, the second author \cite{Zhang2019CRMHD,Zhang2020CRMHD} recently observed that the magnetic resistivity exactly compensates the derivative loss mentioned above. However, it is still hopeless to derive the vanishing resistivity limit. On the other hand, one can still expect to establish the tame estimates for the linearized equation. Based on this and Nash-Moser iteration, Trakhinin-Wang \cite{TW2020MHDLWP,TW2021MHDSTLWP} recently proved the LWP for free-boundary compressible ideal MHD with or without surface tension. We also mention that Chen-Wang \cite{ChenWangCMHDVS} and Trakhinin \cite{Trakhinin2008CMHDVS} proved the LWP for the current-vortex sheets, and Secchi-Trakhinin \cite{Secchi2013CMHDLWP} proved the LWP for the full plasma-vacuum problem for compressible ideal MHD under the non-collinearity condition. However, Nash-Moser iteration leads to a big loss of regularity and does not give the continuous depedence on initial data. It is still unknown whether the local well-posedness result can be improved such that the regularity loss can be avoided and the continuous dependence on initial data can be established.

In this paper, we first prove the a priori estimates without loss of regularity for the free-boundary compressible ideal MHD system in the anisotropic Sobolev spaces. Our proof is based on the modified Alinhac good unknown method, full utilization of the cancellation structure of MHD system and very delicate analysis under the setting of anisotropic Sobolev spaces. Using a parallel argument, we can also prove the uniqueness and the continuous dependence on initial data provided the solution exists. Then we prove a local existence result and a continuation criterion for the smooth solutions with smooth data. Based on these results, we can improve the local existence result to the case that the initial data only satisfies the compatibility conditions up to finite order, such that the regularity loss can be avoided and the continuous dependence on initial data can be established.

\subsection{Reformulation in Lagrangian coordinates and main result}

We use Lagrangian coordinates to reduce the free-boundary problem to a fixed-domain problem. We assume $\Omega:=\T^2\times(-1,1)$ to be the reference domain and $\Gamma:=\T^2\times(\{-1\}\cup\{1\})$ to be the boundary. The coordinates on $\Omega$ is $y:=(y',y_3)=(y_1,y_2,y_3)$. We define $\eta:[0,T]\times\Omega\to \DD$ as the flow map of velocity field $u$, i.e.,
\begin{align}
\p_t\eta(t,y) = u(t, \eta(t,y)),\q \eta(0,y)=\eta_0(y),
\end{align}where $\eta_0$ is a diffeomorphism between $\Omega$ and $\DD_0$. For technical simplicity\footnote{The domain $\T^2\times(-1,1)$ is known to be the reference domain. Using a partition of unity, e.g., \cite{CS2007LWP}, a general bounded domain can also be treated in the same way. Choosing a reference domain allows us to focus on the real issues and avoid the involved calculation caused by partition ofunity. Indeed, our proof is also applicable to the case that $\eta_0$ is a general diffeomorphism that has the same regularity of $v_0$ if we use similar technical modifications as in \cite{GuLuoZhang2021MHD0ST}.} we assume $\eta_0=$ Id. By chain rule, it is easy to see that the material derivative $D_t$ becomes $\p_t$ in the $(t,y)$ coordinates and the free-boundary $\p\DD_t$ becomes fixed ($\Gamma=\T^2\times(\{-1\}\cup\{1\})$). We introduce the Lagrangian variables as follow: $v(t,y):=u(t, \eta(t,y))$, $b(t,y):=B(t, \eta(t,y))$, $q(t,y):=p(t, \eta(t,y))$, $Q(t,y):=P(t,\eta(t,y))$ and $R(t,y):=\rho(t, \eta(t,y))$.

Let $\p=\p_y$ be the spatial derivative in Lagrangian coordinates and we define $\dive Y=\p_iY^i$ to be the (Lagrangian) divergence of the vector field $Y$. We introduce the matrix $A=[\p \eta]^{-1}$, specifically $A^{li}:=\frac{\p y^{l}}{\p x^i}$ where $x^i=\eta^i(t,y)$ is the $i$-th variable in Eulerian coordinates. From now on, we define $\nabla_A^i=\frac{\p}{\p x^i}=A^{li}\p_l$ to be the covariant derivative in Lagrangian coordinates (or say Eulerian derivative) and $\diva X:=\nabla_A\cdot X=A^{li}\p_l X_i$ to be the Eulerian divergence mof the vector field $X$. In the manuscript, we adopt the convention that the Latin indices range over $1,2,3$.  In addition, since $\eta(0,\cdot)=\text{Id}$, we have $A(0,\cdot)=I$, where $I$ is the identity matrix, and $(u_0,B_0,p_0)$ and $(v_0,b_0,q_0)$ agree respectively.

In terms of $\eta,v,b,q,R$, the system \eqref{CMHD}-\eqref{sign} becomes

\begin{equation}\label{CMHDL1}
\begin{cases}
\p_t\eta=v &~~~ \text{in}\,[0,T]\times\Omega\\
R\p_t v=\left(b\cdot\pa\right)b-\pa Q,~~Q=q+\frac12|b|^2 &~~~ \text{in}\,[0,T]\times\Omega\\
\p_tR+R\diva v=0 &~~~ \text{in}\,[0,T]\times\Omega\\
q=q(R) &~~~ \text{in}\,[0,T]\times\bar{\Omega}\\
\p_t b=\left(b\cdot\pa\right)v-b\diva v  &~~~ \text{in}\,[0,T]\times\Omega\\
\diva b=0  &~~~ \text{in}\,[0,T]\times\Omega\\
Q=0,~~b_iA^{li}N_l=0 &~~~\text{on}\,[0,T]\times\Gamma\\
-\frac{\p Q}{\p N}|_{\Gamma}\geq c_0>0  &~~~\text{on}\,\{t=0\}\\
(\eta,v,b,q,R)|_{t=0}=(\text{Id},v_0,b_0,q_0,\rho_0).
\end{cases}
\end{equation}Here $N=(0,0,\pm 1)$ is the unit outer normal of the boundary $\T^2\times\{\pm 1\}$ and $q=q(R)$ is a strictly increasing function of $R$ with $A_0^{-1}\leq q'(R)\leq A_0$ for some constant $A_0>1$.

Let $J:=\det [\p\eta]$ and $\hat{A}:=JA$. Then we have the Piola's identity
\begin{align}\label{piola}
\p_l \hat{A}^{li}=0,
\end{align}
and $J$ satisfies
\begin{align}\label{divJ}
\p_t J = J\diva v
\end{align} which together with $\p_t R+R\diva v=0$ gives that $\rho_0=RJ$.

Suppose $D$ is the derivative $\p$ or $\p_t$, then we have the following identity
\begin{equation}\label{da}
DA^{li}=-A^{lr}~\p_kD\eta_r~A^{ki}.
\end{equation}

Next we express the magnetic field $b$ in terms of $b_0$ and $\eta$ in the following Lemma. This is called the ``frozen effect of the magnetic field".

\begin{lem}\label{b}
We have $b=J^{-1}\bp\eta.$
\end{lem}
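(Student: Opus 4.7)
The plan is to let $\tilde b:= J^{-1}(b_0\cdot\p)\eta$, verify that $\tilde b$ satisfies the same linear ODE (in $t$) and the same initial data as the magnetic field $b$, and then conclude $b=\tilde b$ by uniqueness.

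First I would check the initial condition. At $t=0$ we have $\eta=\text{Id}$ and $J=1$, so $\partial_k\eta^i|_{t=0}=\delta^i_k$ and hence $\tilde b^i|_{t=0}=b_0^k\,\delta^i_k=b_0^i$, matching $b|_{t=0}=b_0$.

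Next I would compute $\p_t \tilde b$. Using \eqref{divJ} to handle $\p_t J^{-1}=-J^{-1}\diva v$ and $\p_t\eta=v$, one has
\begin{equation*}
\p_t\tilde b^i \;=\; -J^{-1}(\diva v)(b_0\cdot\p)\eta^i + J^{-1}(b_0\cdot\p)v^i
\;=\; -\tilde b^i\,\diva v + J^{-1} b_0^k\p_k v^i.
\end{equation*}
The key algebraic step is to recognize the last term as $(\tilde b\cdot\pa)v^i$. Since $A=[\p\eta]^{-1}$, the chain rule identity $A^{lj}\p_k\eta^j=\delta^l_k$ holds; inserting this gives
\begin{equation*}
J^{-1}b_0^k\p_k v^i \;=\; J^{-1} b_0^k\,\delta_k^l\,\p_l v^i \;=\; J^{-1} b_0^k (\p_k\eta^j)\,A^{lj}\p_l v^i \;=\; \tilde b^j \nabla_A^j v^i \;=\; (\tilde b\cdot\pa)v^i.
\end{equation*}
Therefore $\p_t\tilde b = (\tilde b\cdot\pa)v - \tilde b\,\diva v$, which is precisely the fifth equation in \eqref{CMHDL1}.

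Finally, the magnetic equation $\p_t b=(b\cdot\pa)v-b\,\diva v$ is a linear ODE system in $t$ for $b$ with coefficients depending only on $\eta$ and $v$ (hence on $A$ and $\diva v$). Since both $b$ and $\tilde b$ solve this ODE with the same initial datum $b_0$, uniqueness for linear ODEs yields $b=\tilde b=J^{-1}(b_0\cdot\p)\eta$. I do not foresee any serious obstacle here; the only point that requires a bit of care is the chain-rule identity $A^{lj}\p_k\eta^j=\delta^l_k$ used to convert the plain derivative $b_0^k\p_k v$ into the covariant derivative $(\tilde b\cdot\pa)v$, which is the mechanism that also explains why the $J^{-1}$ factor in $\tilde b$ is needed for consistency with the $\diva v$ term.
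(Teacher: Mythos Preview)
Your proof is correct and takes a genuinely different route from the paper. The paper does not verify that $J^{-1}(b_0\cdot\p)\eta$ solves the $b$-equation; instead it first derives the evolution of $b/R$, namely $\p_t(b/R)=(b/R)\cdot\pa v$, and then uses \eqref{da} to show that the contracted quantity $\frac{b_i}{R}A^{li}$ is constant in time. This yields $b_iA^{li}=J^{-1}b_0^l$, from which $b_i=J^{-1}b_0^l\p_l\eta_i$ follows by multiplying with $\p_l\eta$ and using $A^{li}\p_l\eta_j=\delta^i_j$.

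The difference is structural: the paper discovers a conserved quantity (the Cauchy invariant $b_iA^{li}/R$) and reads off the formula from it, whereas you postulate the formula and verify it via ODE uniqueness. Your argument is slightly more direct and self-contained, needing only \eqref{divJ} and the chain-rule identity $A^{lj}\p_k\eta^j=\delta^l_k$; the paper's argument, on the other hand, makes the frozen-in mechanism more transparent by exhibiting the first integral explicitly. Both are short and neither has any real advantage in this context.
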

\begin{proof}
Let us first compute the equation of $b/R$. We have
\begin{align*}
\p_t\left(\frac{b}{R}\right)=&\frac{1}{R}\p_t b +b\p_t \left(\frac{1}{R}\right)=\frac{1}{R}\p_t b +b\p_t \left(\frac{J}{RJ}\right)=\frac{1}{R}\p_t b +\frac{b}{\rho_0}\p_t J\\
=&\frac{1}{R}\left((b\cdot \pa) v-b\diva v\right)+\frac{b}{\rho_0}J\diva v=\frac{b}{R}\cdot\pa v-\frac{b}{R}\diva v+\frac{b}{R}\diva v\\
=&\left(\frac{b}{R}\cdot\pa\right) v.
\end{align*}

Therefore, invoking \eqref{da} we have
\begin{align*}
\p_t\left(\frac{b_i}{R} A^{li}\right)=&\p_t\left(\frac{b_i}{R}\right) A^{li}+\frac{b_i}{R}\p_t A^{li}= \frac{b_j}{R}~A^{kj}~\p_k v_i~A^{li}-\frac{b_i}{R}~A^{lj}~\p_k v_j~A^{ki}=0,
\end{align*}
which implies $\frac{b_i}{R} A^{li}=\frac{b_{0i}}{\rho_0} \delta^{li}=\frac{{b_0}^l}{\rho_0} $, i.e., $ b_i A^{li}=\frac{{b_0}^l R}{\rho_0}=J^{-1} b_0^l $. Finally, the identity $A^{li}\p_l \eta_i=1$ gives us $b_i=J^{-1}b_0^l\p_l\eta_i=J^{-1}\bp\eta_i$.
\end{proof}

Inserting $\rho_0=RJ$ and Lemma \ref{b} into \eqref{CMHDL1}, we get the following system with the initial constraints $\dive b_0 =0$ in $\Omega$, $b_0^3=0$ on $\Gamma$ and $-\frac{\p Q_0}{\p N}|_{\Gamma}\geq c_0>0$. From now on, we call these three conditions to be ``initial constraints" without more explanation.
\begin{equation}\label{CMHDL}
\begin{cases}
\p_t\eta=v &~~~ \text{in}\,[0,T]\times\Omega\\
R\p_t v=J^{-1}\bp\left(J^{-1}\bp\eta\right)-\pa Q,~~Q=q+\frac12|J^{-1}\bp\eta|^2 &~~~ \text{in}\,[0,T]\times\Omega\\
\frac{JR'(q)}{\rho_0}\p_tq+\diva v=0 &~~~ \text{in}\,[0,T]\times\Omega\\
q=q(R)\text{ strictly increasing} &~~~ \text{in}\,[0,T]\times\bar{\Omega}\\
Q=0, &~~~\text{on}\,[0,T]\times\Gamma\\
(\eta,v,q,Q)|_{t=0}=(\text{Id},v_0,q_0,Q_0),~~Q_0=q_0+\frac12|b_0|^2.
\end{cases}
\end{equation}

Before stating our results, we should first define the anisotropic Sobolev space $H_*^m(\Omega)$ for $m\in\N^*$. Let $\sigma=\sigma(y_3)$ be a cutoff function on $[-1,1]$ defined by $\sigma(y_3)=(1-y_3)(1+y_3)$. Then we define $H_*^m(\Omega)$ for $m\in\N^*$ as follows
\[
H_*^m(\Omega):=\left\{f\in L^2(\Omega)\bigg| (\sigma\p_3)^{i_4}\p_1^{i_1}\p_2^{i_2}\p_3^{i_3} f\in L^2(\Omega),~~\forall i_1+i_2+2i_3+i_4\leq m\right\},
\]equipped with the norm
\[
\|f\|_{H_*^m(\Omega)}^2:=\sum_{i_1+i_2+2i_3+i_4\leq m}\|(\sigma\p_3)^{i_4}\p_1^{i_1}\p_2^{i_2}\p_3^{i_3} f\|_{L^2(\Omega)}^2.
\] For any multi-index $I:=(i_0,i_1,i_2,i_3,i_4)\in\N^5$, we define
\[
\p_*^I:=\p_t^{i_0}(\sigma\p_3)^{i_4}\p_1^{i_1}\p_2^{i_2}\p_3^{i_3},~~\lee I\ree:=i_0+i_1+i_2+2i_3+i_4,
\]and define the \textbf{space-time anisotropic Sobolev norm} $\|\cdot\|_{m,*}$ by
\[
\|f\|_{m,*}^2:=\sum_{\lee I\ree\leq m}\|\p_*^I f\|_{L^2(\Omega)}^2=\sum_{i_0\leq m}\|\p_t^{i_0}f\|_{H_*^{m-i_0}(\Omega)}^2.
\]

We define $f_{(j)}=\p_t^j f|_{t=0}$ for $j\in\N$. The main results in this manuscript are the following theorems. The first one is the improved local existence theorem for \textit{smooth} data satisfying the compatibility conditions up to infinite order.
\begin{thm}[Local existence for smooth solutions]\label{CMHDsmooth}
Let $(v_0,b_0,Q_0)\in C^{\infty}(\bar\Omega)$ be the initial data of \eqref{CMHDL} satisfying
\begin{itemize}
\setlength{\itemsep}{0pt}
\setlength{\parsep}{0pt}
\setlength{\parskip}{0pt}
\item the compatibility conditions up to infinite order, i.e., $Q_{(j)}|_{\Gamma}=0,~\forall j\geq 0,~j\in\Z$;
\item the initial constraints $\dive b_0=0$ in $\Omega$, $b_0^3|_{\Gamma}=0$ and the Rayleigh-Taylor sign condition $-\frac{\p Q_0}{\p N}|_{\Gamma}\geq c_0>0$.
\end{itemize}Then there exists some $T_0>0$ only depending on initial data, $c_0$ and $A_0$ defined in \eqref{weight}, such that \eqref{CMHDL} has a unique smooth solution $(\eta,v,b,Q)$ in $C^{\infty}([0,T_0]\times\bar\Omega)$.
\end{thm}
\begin{rmk}[On the existence of smooth initial data satisfying the compatibility conditions up to infinite order]
One should prove the existence of smooth initial data satisfying the compatibility conditions up to infinite order. This can be done by a parallel argument as in Lindblad \cite[Lemma 16.2]{Lindblad2005LWP2}. See the explanation in Appendix \ref{appendix data inf}.
\end{rmk}

The next two theorems show the a priori bounds without loss of regularity, the uniqueness and continuous dependence on initial data provided that the solution exists. They also give the energy estimates without loss of regularity and the continuous dependence on intial data in anisotropic Sobolev spaces for the smooth solution obtained in Theorem \ref{CMHDsmooth}.

\begin{thm}[A priori estimates]\label{CMHDEE}
Assume $m\geq 8$ is an integer. Let the initial data be $(v_0,b_0,Q_0)\in H_*^{m}(\Omega)$ satisfying that
\begin{itemize}
\setlength{\itemsep}{0pt}
\setlength{\parsep}{0pt}
\setlength{\parskip}{0pt}
\item  $(v_{(j)},b_{(j)},Q_{(j)})\in H_*^{m-j}(\Omega)$ for $1\leq j\leq m$, where $f_{(j)}:=\p_t^j f|_{t=0}$;
\item the compatibility condition holds up to $(m-1)$-th order, i.e., $Q_{(j)}|_{\Gamma}=0$ for $0\leq j\leq m-1$;
\item the initial constraints $\dive b_0=0$ in $\Omega$, $b_0^3|_{\Gamma}=0$ and the Rayleigh-Taylor sign condition $-\frac{\p Q_0}{\p N}|_{\Gamma}\geq c_0>0$.
\end{itemize} Then there exists some $T_1>0$ only depending on $\|v_0,b_0,Q_0\|_m$, $c_0$ and $A_0$ (defined in \eqref{weight}), such that the solution $(\eta,v,Q)$ to the system \eqref{CMHDL} satisfies the following estimates in $[0,T_1]$
\begin{align}\label{noloss}
\sup_{0\leq t\leq T_1}\EE_{m}(t)\leq P(\EE_{m}(0)),
\end{align}under the a priori assumptions on $[0,T_1]$
\begin{align}
\label{small} \|J-1\|_{m-1,*}\leq& \frac{1}{4}\\
\label{sign2} -\frac{\p Q}{\p N}\geq& \frac{3}{4}c_0.
\end{align}
Here the energy functional $\EE(t)$ is defined to be
\begin{equation}\label{EE}
\EE_{m}(t):=\|\eta(t,\cdot)\|_{m,*}^2+\|v(t,\cdot)\|_{m,*}^2+\|J^{-1}\bp\eta(t,\cdot)\|_{m,*}^2+\|q(t,\cdot)\|_{m,*}^2+\sum_{\lee I\ree = m}\left|A^{3i}\p_*^I\eta_i\right|_0^2,
\end{equation} and $P(\cdots)$ is a generic polynomial in its arguments.
\end{thm}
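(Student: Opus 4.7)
I would derive an $L^2$-energy identity for $\p_*^I$ applied to the system \eqref{CMHDL} for every multi-index $I$ with $\lee I\ree\le 8$, sum to reproduce $\EE(t)$, and close via Gronwall's inequality. The obstruction is that $\p_*^I$ does not commute with the Eulerian gradient $\pa$: since $A$ depends on $\p\eta$, the commutator $[\p_*^I,\pa]Q$ contains $\p_*^I\p\eta$, a factor at the same order of regularity as the solution itself. Following and modifying the Alinhac good-unknown philosophy, I would introduce
\begin{align*}
\VV := \p_*^I v-(\p_*^I\eta)\cdot\pa v,\qquad \QQ := \p_*^I Q-(\p_*^I\eta)\cdot\pa Q,
\end{align*}
for which one computes $\pa^i\QQ=\p_*^I(\pa^i Q)-(\p_*^I\eta_r)\pa^i\pa^r Q+R_I$, so that the dangerous top-order commutator is absorbed into $\pa\QQ$ and the remainder $R_I$ is controlled in $L^2$ by $\EE$ together with the smallness assumption \eqref{small}.

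Applying $\p_*^I$ to the momentum equation and substituting the good unknowns yields
\begin{align*}
R\,\p_t\VV+\pa\QQ-J^{-1}\bp\bigl(J^{-1}\bp\,\p_*^I\eta\bigr)=\FF_I,
\end{align*}
where $\FF_I$ is $L^2$-controllable by $\EE$. Pairing with $\VV$ in $L^2(\Om)$ produces the velocity energy $\frac{1}{2}\frac{d}{dt}\io R|\VV|^2\dy\approx\frac{1}{2}\frac{d}{dt}\|v\|_{8,*}^2+\text{l.o.t.}$, and integration by parts in the pressure term generates the boundary contribution $\ig A^{3i}\VV_i\,\QQ\dS$. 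Using $Q|_\Gamma=0$ to reduce $\QQ|_\Gamma=-(\p_*^I\eta)\cdot\pa Q|_\Gamma$, substituting $\VV=\p_t\p_*^I\eta+\text{l.o.t.}$, and invoking the Rayleigh–Taylor sign \eqref{sign2} converts this boundary integral into a time derivative of the positive quantity $|A^{3i}\p_*^I\eta_i|_0^2$, producing the boundary energy entry in $\EE$. Since $\p_t\eta=v$, the norm $\|\eta\|_{8,*}^2$ is then recovered by time integration.

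The interior pressure term is closed via the equation of state $q=q(R)$ with $R=\rho_0/J$, which yields $\diva\VV=-\p_t(\swt^{2}\p_*^I q)+\text{l.o.t.}$ modulo controllable commutators; hence $\io\QQ\,\diva\VV\dy$ delivers the time derivative of the pressure energy $\|Q\|_{8,*}^2$ with the natural weight $\swt$. For the magnetic term, because $b_0^3|_\Gamma=0$ the operator $\bp$ is tangential to $\Gamma$, so
\begin{align*}
-\io \VV^j\,J^{-1}\bp\bigl(J^{-1}\bp\,\p_*^I\eta_j\bigr)\dy=\io \bigl(J^{-1}\bp\VV^j\bigr)\bigl(J^{-1}\bp\,\p_*^I\eta_j\bigr)\dy+\text{l.o.t.},
\end{align*}
with \emph{no} boundary contribution; substituting $\VV=\p_t\p_*^I\eta+\text{l.o.t.}$ produces the time derivative of $\|J^{-1}\bp\eta\|_{8,*}^2$, matching the magnetic entry of $\EE$. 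Summing over $\lee I\ree\le 8$ and applying Gronwall then yields $\sup_{[0,T_1]}\EE(t)\le\mathcal{C}(\EE(0))$ for some $T_1>0$.

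The principal obstacle is the interaction of the anisotropic weights with the MHD structure. Unlike the compressible Euler case of \cite{LL2018priori}, MHD suffers a genuine normal-derivative loss in any div-curl analysis, so one cannot recover normal components of $\eta$ from $\diva b$ and the $\pa$-curl of $b$; instead one must exploit the frozen identity of Lemma \ref{b} to transfer all magnetic regularity onto $\eta$, and the tangentiality of $\bp$ at $\Gamma$ is essential throughout the integration-by-parts arguments. Additionally, when $I$ contains $\sigma\p_3$ or $\p_3^2$ factors, the commutators $[\p_*^I,\pa]$ and $[\p_*^I,\sigma]$ mix derivatives of distinct anisotropic weights; the weights $i_1+i_2+2i_3+i_4\le 8$ are tuned precisely so that an interior $\p_3$ counts as two tangential derivatives while $\sigma\p_3$ counts as one, and this accounting must be tracked carefully in every commutator and remainder in the good-unknown construction to avoid an apparent loss at or near $\Gamma$.
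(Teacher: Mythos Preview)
Your outline captures the broad architecture, but two steps fail as written and are precisely where the paper's new ideas enter.

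First, the claim that the standard good unknowns $\VV=\p_*^I v-\p_*^I\eta\cdot\pa v$, $\QQ=\p_*^I Q-\p_*^I\eta\cdot\pa Q$ produce a remainder $R_I$ that is $L^2$-controllable by $\EE$ is false in the anisotropic setting. Take $\p_*^I=\TP^8$: the commutator $[\TP^8,A^{li},\p_l f]$ contains $(\TP A^{li})(\TP^7\p_l f)$, and when $l=3$ this involves $\TP^7\p_3 f$, which has anisotropic order $7+2\cdot 1=9>8$ and is \emph{not} bounded by $\|f\|_{8,*}$. Symmetrically, $(\TP^7 A^{li})(\TP\p_l f)$ involves $\TP^7 A^{Li}\sim\TP^7\p_3\eta$ for $L=1,2$, again of order $9$. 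The paper must therefore introduce \emph{modified} good unknowns $\VV^*=\VV+\Delta_v^*$, $\QQ^*=\QQ+\Delta_Q^*$ with explicit correction terms (Section~\ref{ts8AGU}), and separately for $f=Q$ must invoke the momentum equation to trade $\pA Q$ for $-\rho_0\p_t v+\bp(J^{-1}\bp\eta)$, converting a normal derivative to tangential ones, together with the vanishing $\TP Q|_\Gamma=0$ and $b_0^3|_\Gamma=0$ to manufacture a weight $\sigma$ and turn a bare $\p_3$ into $\sigma\p_3$. Your last paragraph gestures at ``tracking carefully'' but does not supply these mechanisms.

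Second, your boundary treatment assumes $\QQ|_\Gamma=-(\p_*^I\eta)\cdot\pa Q|_\Gamma$, which requires $\p_*^I Q|_\Gamma=0$; this holds only when $\p_*^I$ is purely tangential. For $\p_*^I=\p_3^{i_3}\TP^{8-2i_3}$ with $i_3\ge 1$, the term $IB_0=-\ig JA^{3i}N_3(\p_*^I Q)\VV_i\,dy'$ is nonzero and top-order. The paper (Sections~\ref{sect n4bdry}, \ref{sect tnbdry}) controls it by writing $A^{3i}\p_3 v_i$ and $\hat{A}^{3i}\p_3 Q$ via the continuity and momentum equations, thereby peeling one $\p_3$ off each factor in exchange for a tangential $\dd\in\{\TP,\p_t,b_0\cdot\TP\}$, then rewriting the resulting boundary integral as an interior integral and integrating $\dd$ by parts once more. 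This costs \emph{two} tangential derivatives per normal derivative removed, which is exactly why the space $H_*^8$ is needed; it is not a bookkeeping issue but the core of the argument. You also omit the interior cancellation between the top-order term $K_{11}=-\io J\p_*^I(J^{-1}\bp\eta)\cdot(J^{-1}\bp\eta)\,\p_*^I(\diva v)$ generated by the magnetic energy and its mirror image produced when $\p_*^I(\frac12|b|^2)$ meets $\p_*^I(\diva v)$ in the pressure term; without this cancellation neither side closes.
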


\begin{thm}[Continuous dependence on initial data and uniqueness]\label{CMHDEE2}
Assume $m\geq 8$ is an integer. Let $(v_0^{(i)},b_0^{(i)},Q_0^{(i)})\in H_*^{m}(\Omega)~(i=1,2)$ be two initial datum satisfying the hypothesis in Theorem \ref{CMHDEE}. Let $(\eta^{(i)},v^{(i)},Q^{(i)})$ be the solution to \eqref{CMHDL} with initial data $(v_0^{(i)},b_0^{(i)},Q_0^{(i)})$. Define $[f]=f^{(1)}-f^{(2)}$ for any function $f$ in $\bar\Omega$ and define the energy functional $[\EE](t)$  to be
\begin{equation}\label{EE2}
[\EE]_{m}(t):=\|[\eta](t,\cdot)\|_{m-2,*}^2+\|[v](t,\cdot)\|_{m-2,*}^2+\|(J^{-1}b_0)^{(1)}\cdot\p[\eta](t,\cdot)\|_{m-2,*}^2+\|[q](t,\cdot)\|_{m-2,*}^2+\sum_{\lee I\ree = m-2}\left|A^{(1)3i}\p_*^I[\eta]_i\right|_0^2.
\end{equation}
Then there exists some $T_2>0$ depending on $\|v_0^{(i)},b_0^{(i)},Q_0^{(i)}\|_m~(i=1,2)$, $c_0$ and $A_0$ such that the following estimates hold
\begin{equation}\label{cont dep}
\sup_{0\leq t\leq T_2}[\EE]_{m}(t)\leq [\EE]_{m}(0)P(\EE_m(0))\leq P(\|[v_0],[b_0],[q_0]\|_{m-2},\EE_m(0)),
\end{equation}where $P(\cdot)>0$ is a generic polynomial of its arguments.
\end{thm}

\begin{rmk}[Control of $\EE_m(0)$] It would be better to construct the initial data $(v_0,b_0,Q_0)$ satisfying the compatibility conditions up to $(m-1)$-th order in $H^{m}(\Omega)\hookrightarrow H_*^{m}(\Omega)$, such that
\begin{equation}
\sum_{j=1}^{m}\|(v_{(j)},b_{(j)},Q_{(j)})\|_{H^{m-j}(\Omega)}^2\lesssim P(K_{m}),
\end{equation}where we define $K_{m}:=\|v_0\|_{H^{m}(\Omega)}^2+\|b_0\|_{H^{m}(\Omega)}^2+\|Q_0\|_{H^{m}(\Omega)}^2$. In particular, by the Sobolev embedding $H^{m-j}(\Omega)\hookrightarrow H_*^{m-j}(\Omega)$ for $0\leq j\leq m$, we have
\begin{equation}\label{lossless}
\EE_{m}(0)\lesssim P(K_{m}).
\end{equation}
If we only focus on $(v_0,b_0,Q_0)\in H_*^{m}(\Omega)$ then we can only get $(v_{(j)},b_{(j)},Q_{(j)})\in H_*^{m-2j}(\Omega)$ and thus $\EE_{m}(0)<\infty$ may fail. See Section \ref{data} for detailed discussion.
\end{rmk}

Next, we want to extend the local existence theorem to the case of initial data satisfying compatibility conditions up to finite order. To achieve this, we need a continuation criterion for the smooth solution obtained in Theorem \ref{CMHDsmooth}, which shows that, for any $m\in\N^*, m\geq8$, the $\|\cdot\|_{m,*}$ norm of a smooth solution remains bounded as long as the $\|\cdot\|_{k,*}$ norms $(k\leq m-1)$ are bounded.
\begin{thm}[Continuation of smooth solution]\label{CMHDcontinue}
Assume $m\geq 8$ to be an integer. For the smooth solution $(\eta,v,b,Q)$ obtained in Theorem \ref{CMHDsmooth}, we define
\begin{equation}
T^*:=\sup\left\{T>0\big|(\eta,v,b,Q)\text{ can be extended in }C^{\infty}([0,T]\times\bar\Omega)\right\}.
\end{equation}If $T^*<+\infty$, then either $\lim\limits_{t\nearrow T^*}\EE_{m-1}(t)=+\infty$ for some $m$ or $\lim\limits_{t\nearrow T^*}\inf\limits_{\Gamma}(-\frac{\p Q}{\p N})=0$ holds.
\end{thm}

\begin{rmk}  The proof of this continuation criterion requires the energy estimates for $\EE_m(t)$ to be linear in the highest-order terms. To achieve this, it suffices to carefully analyze each commutator term in the anisotropic Sobolev spaces to ensure the linearity of the highest-order terms, such that the energy inequality becomes
\[
\EE_m(t)\lesssim \EE_m(0)+\int_0^t P(\EE_{m-1}(\tau))\EE_m(\tau)\mathrm{d}\tau.
\]  This also inherits the frameworks of \cite{CL2000priori,LL2018priori} which proved that the solutions to free-boundary Euler equations, if exist, can be extended after time $t=T_*$ provided that all lower order terms of $v,q$ and the second fundamental form of the free surface are bounded at time $t=T_*$. 
\end{rmk}

Finally, we show that, one can prove the local well-posedness for initial data (not necessarily smooth) satisfying the compatibility conditions up to only finite order, provided that one can construct a sequence of smooth data satisfying the compatibility conditions up to infinite order that \textbf{converges to the given data} in $H^m(\Omega)$. However, it is still unknown to how achieve such construction in general due to some technical difficulties. (We expect this to be true since one can construct data satisfying infinitely many compatibility conditions as in \cite{Lindblad2005LWP2} and one can construct data satisfying any number of compatibility conditions approximating given data as in \cite{LL2018priori,Zhang2021elasto}.) We have the following theorem.
\begin{thm}[Local well-posedness]\label{CMHDLWPm}
Assume $m\geq 8$ to be an integer. Let $(v_0,b_0,Q_0)$ be the initial data (not necessarily smooth!) of \eqref{CMHDL} satisfying
\begin{itemize}
\setlength{\itemsep}{0pt}
\setlength{\parsep}{0pt}
\setlength{\parskip}{0pt}
\item the compatibility conditions up to $(m-1)$-th order, i.e., $Q_{(j)}|_{\Gamma}=0,~0\leq j\leq m-1$;
\item the initial constraints $\dive b_0=0$ in $\Omega$, $b_0^3|_{\Gamma}=0$ and the Rayleigh-Taylor sign condition $-\frac{\p Q_0}{\p N}|_{\Gamma}\geq c_0>0$.
\end{itemize}
Assume also there exists a sequence of smooth data $\{(v_0^{(n)},b_0^{(n)},Q_0^{(n)})\}_{m\in\N^*}$ satisfying the compatibility conditions up to infinite order that converges to the given data $(v_0,b_0,Q_0)$ in $H^m(\Omega)$, i.e.,
\[
\lim_{n\to\infty}\|v_0^{(n)}-v_0\|_m+\|b_0^{(n)}-b_0\|_m+\|Q_0^{(n)}-Q_0\|_m=0.
\]
Then there exists some $T_m>0$ only depending on $\|v_0,b_0,Q_0\|_m$, $c_0$ and $A_0$ defined in \eqref{weight}, such that the solution $(\eta,v,b,Q)$ to \eqref{CMHDL} exists in $C([0,T_m]; H_*^{m}(\Omega))$. The solution also satisfies the conclusions of Theorem \ref{CMHDEE}-Theorem \ref{CMHDEE2}, that is, the a priori estimates without loss of regularity in $\|\cdot\|_{m,*}$ norm, the uniqueness and continuous dependence on initial data in $\|\cdot\|_{m-2,*}$ norm.
\end{thm}

\paragraph*{Organization of the paper. }In Section \ref{stat}, we briefly introduce the strategies and the main techniques used in our proof. In Section \ref{sect lemma} we record the lemmas which will be repeatedly used in the manuscript. Then we show the detailed analysis of MHD system in anisotropic Sobolev space in Section \ref{sect n4} $\sim$ Section \ref{normal8}. And we conclude the a priori estimates without loss of regularity, the uniqueness and the continuos dependence on data in Section \ref{apriori}. Finally, in Section \ref{sect LWP}, we explain how to improve the Nash-Moser iteration scheme in \cite{TW2020MHDLWP} such that a local existence theorem for $C^{\infty}$ data can be proved, and then show that continuation criterion in anisotropic Sobolev spaces by further analysis of the commutators. After that, we prove Theorem \ref{CMHDLWPm} by using the conclusions of Theorem \ref{CMHDsmooth} $\sim$ Theorem \ref{CMHDEE2}. The construction of initial data are discussed in Appendix \ref{appendix data}.

\noindent\textbf{List of Notations: }
\begin{itemize}
\setlength{\itemsep}{0pt}
\setlength{\parsep}{0pt}
\setlength{\parskip}{0pt}
\item $\Omega:=\T^2\times(-1,1)$ and $\Gamma:=\T^2\times(\{-1\}\cup\{1\})$.
\item $\|\cdot\|_{s}$:  We denote $\|f\|_{s}:= \|f(t,\cdot)\|_{H^s(\Omega)}$ for any function $f(t,y)\text{ on }[0,T]\times\Omega$.
\item $|\cdot|_{s}$:  We denote $|f|_{s}:= |f(t,\cdot)|_{H^s(\Gamma)}$ for any function $f(t,y)\text{ on }[0,T]\times\Gamma$.
\item $\|\cdot\|_{m,*}$: For any function $f(t,y)\text{ on }[0,T]\times\Omega$, $\|f\|_{m,*}^2:= \sum_{\lee I\ree\leq m}\|\p_*^If(t,\cdot)\|_{L^2}^2$ denotes the $m$-th order space-time anisotropic Sobolev norm of $f$.
\item $P(\cdots)$:  A generic polynomial in its arguments;
\item $\PP_0$:  $\PP_0=P(\EE(0))$;
\item $[T,f]g:=T(fg)-fT(g)$, and $[T,f,g]:=T(fg)-T(f)g-fT(g)$, where $T$ denotes a differential operator and $f,g$ are arbitrary functions.
\item $\TP$: $\TP=\p_1,\p_2$ denotes the spatial tangential derivative.
\item $\nabla^i_A f:=A^{li}\p_{l} f$ denotes the covariant (Eulerian) derivative.
\item $X\cdot\nabla_A f$: For any function $f$ and vector field $X$, such notation denotes the inner-product defined by $X\cdot \nabla_A f:= X_pA^{lp}\p_l f$.
\item $X\cdot \nabla_A Y\cdot \nabla_A f$: For any function $f$ and vector field $X,Y$, such notation denotes the inner-product defined by $X\cdot \nabla_A Y\cdot \nabla_A f:= X_pA^{lp}\p_l Y_r A^{mr}\p_m f$.
\end{itemize}

\paragraph*{Acknowledgement.}The authours thank the anonymous referees for their comments and suggestions that help us improve the quality of this paper. Hans Lindblad was supported in part by Simons Foundation Collaboration Grant 638955. Junyan Zhang would like to thank Tao Wang and Chenyun Luo for helpful discussion.

\section{Strategy of the proof}\label{stat}

Before going to the details, we introduce the basic strategies and techniques of our proof, especially for the proof of energy estimates without loss of regularity. At the end of this section we will also explain how we improve the local well-posedness result using our energy estimates and the local existence result of \cite{TW2020MHDLWP}. From now on, we will only show the proof for Theorem \ref{CMHDEE}$\sim$\ref{CMHDEE2} for $m=8$ and drop the index $m$ in $\EE_m(t)$ for simplicity of notations.

\subsection{Choice of the function spaces}\label{stat1}

The compressible MHD system \eqref{CMHD}-\eqref{MHDI} is a hyperbolic system with charactersitic boundary conditions and violates the uniform Kreiss-Lopatinski\u{\i} condition \cite{lopatinskii}. This usually causes a loss of normal derivative. For certain types of such hyperbolic system, e.g., compressible Euler equations \cite{LL2018priori,Trakhinin2009gas}, one can control the normal derivatives by the div-curl analysis so that the energy estimates and the LWP can be established in standard Sobolev spaces. However, such div-curl analysis is not applicable to compressible ideal MHD. In fact, taking curl eliminates the symmetry enjoyed by the equations, and there is also a derivative loss in the source term of the wave equation of pressure which is the key to the divergence estimates. For related details, we refer to \cite[Section 1.5]{Zhang2019CRMHD}.

To compensate such derivative loss, Chen \cite{ChenSX1982} first introduced the anisotropic Sobolev spaces $H_*^m$ to study the hyperbolic system with characteristic boundary conditions. Then Yanagisawa-Matsumura \cite{1991MHDfirst} established the first LWP result for the fixed-domain problem of compressible ideal MHD in anisotropic Sobolev spaces. Later, \cite{1991MHDfirst} was improved by Secchi \cite{Secchi1995,Secchi1996} such that the regularity loss was avoided. On the other hand, Ohno-Shirota \cite{OS1998MHDill} constructed an explicit counterexample to show the ill-posedness for the linearized fixed-domain problem for compressible MHD in $H^l(l\geq 2)$.

Hence, the failure of div-curl analysis and the results of the fixed-domain problem \cite{ChenSX1982,1991MHDfirst,Secchi1995,OS1998MHDill,ChenSX2013} motivate us to study the free-boundary compressible ideal MHD system under the setting of anisotropic Sobolev spaces instead of standard Sobolev spaces. However, we emphasize that it is still difficult to directly generalize Secchi \cite{Secchi1995} to the free-boundary problem due to the following three reasons:
\begin{enumerate}
\item The regularity of the boundary is no longer $C^{\infty}$ as in the case of fixed domain. In fact, the regularity of the free boundary enters to the highest order.
\item The regularity of the flow map is limited. After reducing the free-boundary problem to a fixed-domain problem, the commutator of the covariant derivative and the full derivative cannot be controlled directly.
\item The Eulerian normal velocity $u\cdot n$ does not vanish on the free boundary. However, $u\cdot n=0$ plays an important role in the proof of \cite{1991MHDfirst, Secchi1995}.
\end{enumerate}

In fact, our analysis in the presenting manuscript is based on the modified Alinhac good unknown method, subtle cancellation structures of MHD system and the utilization of the anisotropy of the function space $H_*^m$. Here we also emphasize that our strategy is completely applicable to compressible Euler equations just by setting $b_0=0$. Our result also gives an alternative energy estimate for compressible Euler equations without the analysis of div-curl decomposition or the wave equation.

\subsection{Motivation for introducing Alinhac good unknowns}\label{stat2}

Denote $\p_*^I=\p_t^{i_0}(\sigma\p_3)^{i_4}\p_1^{i_1}\p_2^{i_2}\p_3^{i_3}$ with $\lee I\ree:=i_0+i_1+i_2+2i_3+i_4=8$. For simplicity of the notations, we use $\|\cdot\|_{s},|\cdot|_s$ to represent the $H^s(\Omega)$ norm and the $H^s(\Gamma)$ norm respectively. Taking $\p_*^I$ in the second equation of \eqref{CMHDL} and multiplying $J$, we get
\[
\rho_0\p_t \p_*^I v=-J\p_*^I(\pa Q)+\bp\p_*^I b+[\p_*^I,\bp]b-J[\p_*^I,R]\p_t v
\]
In the energy estimates, we need to commute $\pa$ with $\p_*^I$ and then integrate by parts. However, the commutator $[\p_*^I,A^{li}]\p_l f$ contains the following terms whose $L^2(\Omega)$-norms cannot be controlled in the anisotropic Sobolev space
\begin{itemize}
\setlength{\itemsep}{0pt}
\setlength{\parsep}{0pt}
\setlength{\parskip}{0pt}
\item $(\p_*^I A^{li})(\p_l f)$, which cannot be controlled even in the standard Sobolev spaces when $i_0=0$;
\item $(\p_*^{I-I'}A^{li})(\p_*^{I'}\p_l f)$, when $l=1,2$ since $A^{li}$ consists of $(\TP\eta)(\p_3\eta)$;
\item $(\p_*^{I'} A^{li})(\p_*^{I-I'}\p_lf)$, when $l=3$,
\end{itemize} where $f=Q$ or $v_i$ and $I'$ is a multi-index with $\lee I'\ree=1$. To overcome such difficulty, we can use the ideas of the Alinhac good unknown method, i.e., we can rewrite $\p_*^I(\pa Q)$ and $\p_*^I(\pa\cdot v)$ in terms of the sum of the covariant derivative part and the commutator part satisfying
\begin{align}
\label{goodq} \p_*^I(\pa Q)=\pa\QQ+C(Q),\text{ with }\|\QQ-\p_*^I Q\|_0+\|\p_t(\QQ-\p_*^I Q)\|_0+\|C(Q)\|_0\leq P(\EE(t)),\\
\label{goodv} \p_*^I(\pa\cdot v)=\pa\cdot\VV+C(v),\text{ with }\|\VV-\p_*^I v\|_0+\|\p_t(\VV-\p_*^I v)\|_0+\|C(v)\|_0\leq P(\EE(t)).
\end{align}Here $\QQ,\VV$ are called the ``Alinhac good unknowns" of $Q,v$ (The precise expressions will be determined later).

In other words, the above analysis shows that the essential highest order term in $\p_*^I(\pa f)$ is not the term got by simply commuting $\p_*^I$ with $\pa$. Instead, the essential highest order term in $\p_*^I(\pa f)$ is \textbf{exactly} the covariant derivative of the Alinhac good unknown of $f$, and the good unknowns $\VV$ and $\QQ$ are essentially formed by replacing the derivatives in the Lagragian coordinates $\p_*^I$ by the covariant derivatives with respect to the Eulerian coordinates expressed in the Lagrangian coordinates. Such crucial fact was first observed by Alinhac \cite{Alinhacgood89} and has been widely used for quasilinear hyperbolic system. In the study of free-surface fluid, such method was first implicitly used in the $Q$-tensor energy introduced by Christodoulou-Lindblad \cite{CL2000priori} which was later generalized by \cite{HaoLuo2014priori,LL2018priori,Luo2018CWW,Ginsberg2018rel,Zhang2019CRMHD}. See also \cite{MRgood2017,WangXingood,GuWang2016LWP,LuoZhang2020CWW,Zhang2020CRMHD,GL2021rel} for the explicit applications.

Under the setting of \eqref{goodq}-\eqref{goodv}, we can do the energy estimates by analyzing the Alinhac good unknowns via their evolution equation instead of the $\p_*^I$-differentiated variables.
\begin{align}\label{goodeq}
\rho_0\p_t\VV=-J\pa \QQ+\bp(\p_*^I b)+\underbrace{\left(\rho_0\p_t(\VV-\p_*^I v)-C(Q)+[\p_*^I,\bp]b-J[\p_*^I,\rho]\p_t v\right)}_{=:\FF}.
\end{align} Taking $L^2(\Omega)$ inner product of \eqref{goodeq} and $\VV$ and then integrating by parts, we can get the energy identity
\begin{equation}
\frac12\ddt\io\rho_0|\VV|^2=-\io (\p_*^I(J^{-1}\bp\eta))\cdot\bp\VV\dy+\io J\QQ(\pa\cdot\VV)\dy+\io \FF\cdot\VV\dy-\ig JA^{3i}N_3\QQ\VV_i\dyy.
\end{equation}By direct computation we can prove $\|\FF\|_0\leq P(\EE(t))$, so it remains to control
\begin{align}
K_1:=&-\io (\p_*^I(J^{-1}\bp\eta))\cdot\bp\VV\dy,\\
I_1:=&\io J\QQ(\pa\cdot\VV)\dy,\\
\label{IBIB} IB:=&-\ig JA^{3i}N_3\QQ\VV_i\dyy,
\end{align}where $\dyy:=\dy_1\dy_2$ is the area unit of the boundary $\Gamma$.

\subsection{Interior estimates and cancellation structure}\label{stat3}

Below we use ``$\cdots$" to represent the terms whose $L^2$ norms can be directly controlled by $P(\EE(t))$. The term $K_1$ gives the energy of the magnetic field. Recall that the top-order term in $\VV$ is $\p_*^I v=\p_*^I\p_t\eta$ which yields
\begin{align*}
K_1=&-\io (\p_*^I(J^{-1}\bp\eta^i))(\bp\p_*^I\p_t\eta_i)\dy+\cdots\\
=&-\frac12\ddt\io J|\p_*^I(J^{-1}\bp\eta)|^2\dy-\io\p_*^I(J^{-1}\bp\eta^i)\,(J^{-1}\bp\eta_i)\p_t\p_*^I J\dy+\cdots\\
=&-\frac12\ddt\io J|\p_*^I(J^{-1}\bp\eta)|^2\dy\underbrace{-\io J\p_*^I(J^{-1}\bp\eta^i)\,(J^{-1}\bp\eta_i)\p_*^I(\diva v)\dy}_{=:K_{11}}+\cdots,
\end{align*}where we use $b=J^{-1}\bp\eta$ and $\p_t J=J\diva v$. Note that $K_{11}$ cannot be directly controlled due to the presence of $\p_*^I(\diva v)$. Instead, it will be exactly cancelled by another term produced by $I_1$.

The term $I_1$ gives the energy of the fluid pressure $q$ and the cancellation structure with $K_{11}$. Recall that $\pa\cdot\VV=\p_*^I(\diva v)-C(v)$ and $Q=q+\frac12 |J^{-1}\bp\eta|^2$. We get
\begin{align*}
I_1=&\io J(\p_*^I q)\p_*^I(\diva v)+\io J\left(\p_*^I\left(\frac12|J^{-1}\bp\eta|^2\right)\right)\p_*^I(\diva v)\dy+\cdots\\
=&-\io J(\p_*^I q)\p_*^I\left(\frac{JR'(q)}{\rho_0}\p_t q\right)+\io J\p_*^I(J^{-1}\bp\eta^i)(J^{-1}\bp\eta_i)\p_*^I(\diva v)\dy+\cdots\\
=&-\frac{1}{2}\ddt\io\frac{J^2R'(q)}{\rho_0}|\p_*^I q|^2\dy+(-K_{11})+\cdots
\end{align*} Then using $Q=q+\frac12|J^{-1}\bp\eta|^2$, we also get the control of the total pressure $Q$.

\subsection{Modified Alinhac good unknowns}\label{stat4}

Before analyzing the boundary integral $IB$, we have to figure out the precise expressions of the Alinhac good unknowns $\VV,\QQ$ which can be derived by analyzing $\p_*^I(\pa f)$ for $f=v_i$ and $Q$. We will repeatedly use \eqref{da} in the analysis of commutators. First, for any multi-index $I'$ with $\lee I'\ree=1$, we have with the notation $[T,f,g]:=T(fg)-T(f)g-fT(g)$
\begin{align*}
\p_*^I(\pa^i f)=&\pa^i(\p_*^I f)+(\p_*^I A^{li})\,\p_l f+[\p_*^I,A^{li},\p_l f]\\
\overset{\eqref{da}}{=}&\pa^i(\p_*^I f)-\p_*^{I-I'}(A^{lr}\,\p_*^{I'}\p_m\eta_r\,A^{mi})\,\p_l f+[\p_*^I,A^{li},\p_l f]\\
=&\pa^i(\underbrace{\p_*^I f-\p_*^I\eta_rA^{lr}\p_l f}_{=:\p_*^I f-\p_*^I\eta\cdot\pa f})+\p_*^I\eta_r\,\pa^i(\pa^r f)-([\p_*^{I-I'},A^{lr}A^{mi}]\p_*^{I'}\p_m\eta_r)\p_l f+[\p_*^I,A^{li},\p_l f].
\end{align*}

Under the setting of standard Sobolev spaces, the term $\p_*^I f-\p_*^I\eta\cdot\pa f$ is already the standard Alinhac good unknown of $f$ (with respect to $\p_*^I$). 
See also \cite{MRgood2017,WangXingood,GuWang2016LWP,LuoZhang2020CWW,Zhang2020CRMHD,GL2021rel}. However, under the setting of anisotropic Sobolev spaces, we still need to analyze the commutators $-([\p_*^{I-I'},A^{lr}A^{mi}]\p_*^{I'}\p_m\eta_r)\p_l f$ and $[\p_*^I,A^{li},\p_l f]$ whose $L^2(\Omega)$ norms may not be directly controlled due to the anisotropy of $H_*^m$.

In particular, as long as $\p_*^I$ is not the purely non-weighted normal derivative $\p_3^4$, the commutator $[\p_*^I,A^{li},\p_l f]$ always contains the term $(\p_*^{I'}A^{li})(\p_*^{I-I'}\p_l f)$ whose $L^2(\Omega)$ norm cannot be controlled when $l=3$ due to the anisotropy of $H_*^m$. In fact, we should use different methods to analyze this term for $f=Q$ and $f=v_i$ respectively.
\begin{itemize}
\item When $f=v_i$, by using \eqref{da}, we can rewrite this term to be
\begin{align*}
(\p_*^{I'}A^{li})(\p_*^{I-I'}\p_l v_i)=&-(A^{lp}\,\p_*^{I'}\p_m\eta_p\,A^{mi})\p_*^{I-I'}\p_l v_i=-A^{li}\,\p_*^{I'}\p_m\eta_i\,A^{mp}\,\p_*^{I-I'}\p_l v_p\\
=&-\pa^i(\p_*^{I-I'}v_p~A^{mp}~\p_*^{I'}\p_m\eta_i)+\pa^i(A^{mp}\p_*^{I'}\p_m\eta_i)\,\p_*^{I-I'}v_p.
\end{align*} Then we can merge $-\p_*^{I-I'}v_p ~A^{mp}~\p_*^{I'}\p_m\eta_i$ into the good unknown of $v$, i.e., the covariant derivative part in \eqref{goodv}, and merge $\pa^i(A^{mp}\p_*^{I'}\p_m\eta_i)\p_*^{I-I'}v_p$ into the commutator part $C(v)$ in \eqref{goodv} because its $L^2$ norm can be directly controlled.

\item When $f=Q$, we invoke \eqref{da} and the MHD equation $-\pA Q=\rho_0\p_t v-\bp(J^{-1}\bp\eta)$ to get the following reduction. Here $\hat{A}=JA$.
\begin{align*}
J(\p_*^{I'}A^{li})(\p_*^{I-I'}\p_l Q)=&-(\hat{A}^{lp}\,\p_*^{I'}\p_m\eta_p\,A^{mi})(\p_*^{I-I'}\p_l Q)\\
=&-(\p_*^{I'}\p_m\eta_p~A^{mi})\,\p_*^{I-I'}\underbrace{(\hat{A}^{lp}\p_l Q)}_{=\nabla_{\hat{A}}^p Q}+(\p_*^{I-I'}\hat{A}^{lp})(\p_l Q)(\p_*^{I'}\p_m\eta_p~A^{mi})+[\p_*^{I-I'},\hat{A}^{lp},\p_l Q]\\
=&(\p_*^{I'}\p_m\eta_p~A^{mi})\p_*^{I-I'}\left(\rho_0\p_tv^p-\bp(J^{-1}\bp\eta^p)\right)\\
&+(\p_*^{I-I'}\hat{A}^{lp})(\p_l Q)(\p_*^{I'}\p_m\eta_p~A^{mi})+[\p_*^{I-I'},\hat{A}^{lp},\p_l Q]
\end{align*}
\begin{rmk}
Note that $\p_t$ and $\bp$ are both tangential derivatives while $\pa Q$ always contains a normal derivative. Such substitution actually makes the order of the derivatives lower with the help of the anisotropy of $H_*^m$.
\end{rmk}

The last term above is directly controlled. Since $\lee I-I'\ree=7$, we have
\begin{equation}
\|(\p_*^{I'}\p_m\eta_p~A^{mi})\p_*^{I-I'}(\rho_0\p_tv^p)\|_0\lesssim\|\p_*^{I'}\p_m\eta_p~A^{mi}\|_{L^{\infty}}\|\p_*^{I-I'}(\rho_0\p_tv^p)\|_0\lesssim P(\|\eta\|_{7,*})\|\rho_0\|_{7,*}\|v\|_{8,*}.
\end{equation}

For the term $-\p_*^{I'}\p_m\eta_p~A^{mi}~\p_*^{I-I'}\left(\bp(J^{-1}\bp\eta_p)\right)$, we need to use $b_0^3|_{\Gamma}=0$ to produce a weight function to make $b_0^3\p_3$ become a weighted normal derivative. By the fundamental theorem of calculus, we know (suppose $y_3>0$ without loss of generality)
\[
|b_0^3(t,y_3)|_{L^{\infty}(\T^2)}=\left|0+\int_1^{y_3}\p_3b_0^3(t,\zeta_3)d\zeta_3\right|_{L^{\infty}(\T^2)}\leq(1-y_3)\|\p_3b_0\|_{L^{\infty}}\lesssim \sigma(y_3)\|\p_3b_0\|_{L^{\infty}},
\]and thus
\begin{equation}\label{weight1}
\begin{aligned}
&\left\|(\p_*^{I'}\p_m\eta_p~A^{mi})\,\p_*^{I-I'}\left(\bp(J^{-1}\bp\eta_p\right)\right\|_0\\
\lesssim &P(\|\eta\|_{7,*})\left(\|b_0\|_{7,*}\|J^{-1}\bp\eta\|_{8,*}+\|\p_3b_0\|_{L^{\infty}}\|(\sigma\p_3)\p_*^{I-I'}(J^{-1}\bp\eta)\|_0\right)\\
\lesssim &P(\|\eta\|_{7,*})\left(\|b_0\|_{7,*}\|J^{-1}\bp\eta\|_{8,*}\right).
\end{aligned}
\end{equation}
In addition, the term $(\p_*^{I-I'}\hat{A}^{lp})(\p_l Q)(\p_*^{I'}\p_m\eta_pA^{mi})$ can be directly controlled when $l=3$ since $\hat{A}^{3p}$ consists of $(\TP\eta)(\TP\eta)$ (cf. \eqref{A}). When $l=1,2$, one should again invoke \eqref{da} to compute the highest order term and use $\TP Q|_{\Gamma}=0$ to produce a weight function as in \eqref{weight1}.
\begin{rmk}
From \eqref{weight1}, the definition of $H_*^m$ and the fact $\sigma|_{\Gamma}=0$, the weighted derivative $(\sigma\p_3)$ plays a similar role as a tangential derivative. In fact, one should consider the weighted derivative $(\sigma\p_3)$ as a tangential derivative throughout this manuscript.
\end{rmk}
\end{itemize}

There are three other terms which need further analysis:
\begin{itemize}
\setlength{\itemsep}{0pt}
\setlength{\parsep}{0pt}
\setlength{\parskip}{0pt}
\item $e_1:=-\p_*^{I-I'}(A^{lr}A^{mi})~(\p_*^{I'}\p_m\eta_r\,\p_l f)$. When $\p_*^{I-I'}$ does not contain time derivative, the term $\p_*^{I-I'}(A^{lr}A^{mi})$ cannot be controlled since both $A^{1i}$ and $A^{2i}$ contain $\p_3\eta$.

\item $e_2:=-\p_*^{I'}(A^{lr}A^{mi})\,(\p_*^{I-I'}\p_m\eta_r\,\p_l f)$. When $\p_*^{I-I'}$ does not contain time derivative, the term $\p_*^{I-I'}\p_m\eta_r$ cannot be controlled when $m=3$ since $\p_*^{I-I'}\p_3\eta$ should be controlled by $\|\eta\|_{9,*}$.

\item $e_3:=(\p_*^{I-I'}A^{li})\,\,(\p_*^{I'}\p_l f)$. When $\p_*^{I-I'}$ does not contain time derivative, the term $\p_*^{I-I'}A^{li}$ cannot be controlled when $l=1,2$ since $A^{1i}$ and $A^{2i}$ contains $\p_3\eta$.
\begin{rmk}
Since $\p_t \eta$ (resp. $\p_t A$) has the same spatial regularity as $\eta$ (resp. $A$), the $L^2(\Omega)$-norms of $e_1,e_2,e_3$ can be directly controlled when $\p_*^{I-I'}$ contains at least one time derivative.
\end{rmk}

\item When $\p_*^I$ contains the weighted normal derivative $(\sigma\p_3)$, we need to analyze the extra terms which are produced when $\p_3$ falls on $\sigma(y_3)$. This appears when we commute $\bp$ or $\pa$ with $\p_*^I$.
\end{itemize}

We note that these terms can be controlled by similar arguments as in the analysis of $(\p_*^{I'}A^{li})(\p_*^{I-I'}\p_l f)$. In other words, the following three techniques are enough for us to control the remaining terms.
\begin{itemize}
\setlength{\itemsep}{0pt}
\setlength{\parsep}{0pt}
\setlength{\parskip}{0pt}
\item Modify the definition of Alinhac good unknowns by rewriting the higher order terms to be a covariant derivative plus $L^2(\Omega)$-bounded terms.
\item Produce a weight function by using $b_0^3|_{\Gamma}=0$ and $\TP Q|_{\Gamma}=0$ in order to replace one $\p_3$ by $(\sigma\p_3)$.
\item Replace $\pA Q$ by $-\rho_0\p_t v+\bp(J^{-1}\bp\eta)$ in order to make the order of the derivatives lower thanks to the anisotropy of $H_*^m$.
\end{itemize}See Section \ref{ts8AGU} for detailed derivation of the modified Alinhac good unknowns and Section \ref{normal8} for the analysis of weighted derivatives. Therefore, we can write
\begin{align}
\label{goodqq} \QQ=\p_*^I Q-\p_*^I\eta\cdot\pa Q+\Delta_Q,\\
\label{goodvv} \VV_i=\p_*^I v_i-\p_*^I\eta\cdot\pa v_i+(\Delta_v)_i,
\end{align}where $\|\Delta_f\|_{1,*}\lesssim P(\EE(t))$ and the properties \eqref{goodq}-\eqref{goodv} still hold.

\subsection{Boundary estimates and necessity of anisotropy}\label{stat5}
Invoking \eqref{goodqq}-\eqref{goodvv}, we have
\begin{equation}\label{bdry1}
\begin{aligned}
IB=&-\ig JA^{3i}N_3\QQ\VV_i\dyy\\
=&\underbrace{-\ig JA^{3i}N_3(\p_*^I Q)\VV_i\dyy}_{=:IB_0}+\underbrace{\ig JA^{3i}N_3(\p_*^I\eta\cdot\pa Q)\VV_i\dyy}_{=:IB_1}+\cdots,
\end{aligned}
\end{equation} modulo the terms involving $\Delta_Q$ and $\Delta_v$ which can be controlled either by using trace lemma for anisotropic Sobolev space or using the trick of divergence theroem as in \eqref{IBIB7}. The detailed analysis can be found in Section \ref{sect n4bdry}, \ref{sect t8bdry} and \ref{sect tnbdry}.

\paragraph*{Regularity of the free surface and standard cancellation structure}

First, $IB_1$ in \eqref{bdry1} gives the boundary energy and a cancellation structure enjoyed by the standard Alinhac good unknown arguments as in \cite{MRgood2017,WangXingood,GuWang2016LWP,LuoZhang2020CWW,Zhang2020CRMHD,GL2021rel}. In specific, since $\TP_1 Q=\TP_2 Q=0$ on $\Gamma$, we have
\begin{equation}\label{IBIB1}
\begin{aligned}
IB_1=&\ig J\frac{\p Q}{\p N} \p_*^I\eta_k\,A^{3k}\,A^{3i}\,(\p_*^I \p_t\eta_i-\p_*^I \eta_r\,A^{lr}\,\p_l v_i+\Delta_{v_i})\dyy\\
=&-\frac12\ddt\ig \left(-J\frac{\p Q}{\p N}\right)\left|A^{3i}\p_*^I\eta_i\right|^2\dyy+\frac12\ig \p_t\left(-J\frac{\p Q}{\p N}\right)\left|A^{3i}\p_*^I\eta_i\right|^2\dyy\\
&-\ig J\frac{\p Q}{\p N}(A^{3k}\p_*^I\eta_k)\,\p_tA^{3i}\,\p_*^I\eta_i\dyy -\ig J\frac{\p Q}{\p N}(A^{3k}\p_*^I\eta_k)A^{3i}\p_*^I\eta_r\,A^{lr}\p_l v_i\dyy\\
&+\ig J\frac{\p Q}{\p N} \p_*^I\eta_k\,A^{3k}\,A^{3i}\,\Delta_{v_i}\dyy.
\end{aligned}
\end{equation}Invoking the Rayleigh-Taylor sign condition \eqref{sign2}, we get the boundary energy $\left|A^{3i}\p_*^I\eta_i\right|_0^2$ which exactly controls the second fundamental form of the free surface. The second term can be directly controlled thanks to the boundary energy. Then plugging $\p_tA^{3i}=-A^{3r}~\p_l v_r~A^{li}$ into the third term yields the cancellation with the fourth term. The last term can be controlled directly by using the boundary energy and trace lemma for anisotropic Sobolev space.
\begin{rmk}
The cancellation structure above, enjoyed by the Alinhac good unknown, relies on the fact that $A^{3i}(\p_*^I\p_t\eta_i-\p_*^I\eta_r~A^{lr}\p_l\p_t\eta_i)=\p_t(A^{3i}\p_*^I\p_t\eta_i)$ which can be proved by using \eqref{da} with $D=\p_t$. This identity will be repeatedly used to derive similar cancellation structure in the boundary estimates.
\end{rmk}

\paragraph*{Reduction of the normal derivatives and the advantage of the anisotropy}

When $\p_*^I$ contains normal derivative, $\p_*^I Q$ no longer vanishes on $\Gamma$. In this case we write $\p_*^I=\p_*^{I-e_3}\p_3$ where the multi-index $e_3$ is defined by $(i_0,i_1,i_2,i_3,i_4)=(0,0,0,1,0)$ and $\langle I-e_3\rangle=6$. We shall analyze
\begin{equation}\label{IBIB0}
IB_0=\ig N_3 J(\p_*^{I-e_3}\p_3Q)(A^{3i}\,\p_*^{I-e_3}\p_3 v_i)\dyy+\ig N_3 J(\p_*^{I-e_3}\p_3Q)(\p_*^{I-e_3}\p_3\eta_p\,A^{lp}\p_l v_i)\dyy=:IB_{01}+IB_{02}.
\end{equation}

First, for $IB_{01}$, we invoke the third equation in \eqref{CMHDL} to replace the normal derivative in $A^{3i}\p_3 v_i$ by tangential derivative
\begin{align}
\label{relationv} A^{3i}\,\,\p_*^{I-e_3}\p_3v_i=&\p_*^{I-e_3}(A^{3i}\p_3v_i)-[\p_*^I,A^{3i}]\p_3 v_i=-\p_*^{I-e_3}\left(\frac{JR'(q)}{\rho_0}\p_t q\right)-\sum_{L=1}^2\p_*^{I-e_3}(A^{Li}\TP_Lv_i)-[\p_*^{I-e_3},A^{3i}]\p_3 v_i.
\end{align}Note that we replace a normal derivative by a tangential derivative in the first term on the right side. The highest order terms in the last commutator are $\p_*^{I-e_3}A^{3i}\p_3 v_i$ and $\p_*^{I'}A^{3i}\p_*^{I-I'}v_i$ with $\langle I\rangle=1$. Since $A^{3i}$ consists of $\TP\eta\times\TP\eta$, we know the highest order of derivatives in either of these two terms is 7 (in the sense of anisotropy, that is, $\langle I\rangle=i_0+i_1+i_2+2i_3+i_4)$.

The most difficult term is $\p_*^{I-e_3}A^{Li}=-A^{Lp}(\p_*^{I-e_3}\p_m\eta_p)A^{mi}=-A^{Lp}(\p_*^{I-e_3}\p_3\eta_p)A^{3i}-\sum\limits_{M=1}^2A^{Lp}(\p_*^{I-e_3}\TP_M\eta_p)A^{Mi}$, in which the contribution of $-A^{Lp}(\p_*^{I-e_3}\p_3\eta_p)A^{3i}$ in $IB_{01}$ exactly cancels with the contribution of $l=1,2$ in $IB_{02}$. The highest order term in $\p_*^{I-e_3}\p_3\eta_p\,A^{lp}\p_l v_i$ corresponding to $l=3$ in $IB_{02}$ is actually $\p_3\eta_p\,\p_*^{I-e_3}A^{3p}~\p_l v_i=\p_3\eta_p\,\p_*^{I-e_3}(J^{-1}\TP_1\eta\times\TP_2\eta)_p~\p_l v_i$ thanks to the identity $A^{3p}\p_3\eta_p=1$.

Next we replace $\p_3 Q$ by tangential derivative of $v$ and $\bp\eta$. Since $A^{3i}\p_3\eta_i=1$, we have
\begin{equation}\label{relationq}
\begin{aligned}
J(\p_*^{I-e_3}\p_3Q)=&\p_3\eta_i\,\hat{A}^{3i}\,(\p_*^{I-e_3}\p_3Q)=\p_3\eta_i\,\p_*^{I-e_3}(\hat{A}^{3i}\p_3 Q)-\p_3\eta_i[\p_*^{I-e_3},\hat{A}^{3i}]\p_3 Q\\
=&\p_3\eta_i\,\p_*^{I-e_3}(\underbrace{\hat{A}^{li}\p_l Q}_{=\pA^i Q})-\sum_{L=1}^2\p_3\eta_i\,\p_*^{I-e_3}(\hat{A}^{Li}\TP_L Q)-\p_3\eta_i[\p_*^{I-e_3},\hat{A}^{3i}]\p_3 Q\\
=&\p_3\eta_i\,\p_*^{I-e_3}\left(-\rho_0\p_t v^i-\bp(J^{-1}\bp\eta^i)\right)-\sum_{L=1}^2\p_3\eta_i\,\p_*^{I-e_3}(\hat{A}^{Li}\TP_L Q)-\p_3\eta_i[\p_*^{I-e_3},\hat{A}^{3i}]\p_3 Q
\end{aligned}
\end{equation}

Note that $\TP_L Q|_{\Gamma}=0$ for $L=1,2$ eliminates the highest order term $\p_3\eta_i\,(\p_*^{I-e_3}\hat{A}^{Li})\,\TP_L Q$. And $b_0^3|_{\Gamma}=0$ implies that $\bp|_{\Gamma}=b_0^1\TP_1+b_0^2\TP_2$ is a tangential derivative on $\Gamma$. The last commutator can be controlled in the same way as \eqref{relationv}. Combining \eqref{IBIB0}-\eqref{relationq}, the highest order terms in $IB_0$ can all be written as the following form
\[
\ig N_3(\p_*^{I-e_3}\dd f)(\p_*^{I-e_3} \dd g)h\dyy,
\]where $\dd$ can be $(b_0\cdot\TP),\TP,\p_t$(tangential), and $f,g$ can be $\eta,v,q,J^{-1}\bp\eta$, and $h$ consists of the the terms containing at most first-order derivative of $\eta$ and $v$. To control such boundary integral, we first rewrite it to the interior thanks to the divergence theorem in $y$-coordinates, and then integrate $\dd$ by parts
\begin{equation}\label{IBIB7}
\begin{aligned}
&\ig N_3(\p_*^{I-e_3}\dd f)(\p_*^{I-e_3} \dd g)h\dyy\\
=&\io(\p_3\p_*^{I-e_3}\dd f)(\p_*^{I-e_3} \dd g)h\dy+\io(\p_*^{I-e_3}\dd f)(\p_3\p_*^{I-e_3} \dd g)h\dy+\io(\p_*^{I-e_3}\dd f)(\p_*^{I-e_3} \dd g)\p_3h\dy\\
\overset{\dd}{=}&-\io(\p_3\p_*^{I-e_3}f)(\p_*^{I-e_3} \dd^2 g)h\dy+\io(\p_3\p_*^{I-e_3}f)(\p_*^{I-e_3} \dd g) \dd h\dy\\
&-\io(\p_*^{I-e_3}\dd^2 f)(\p_3\p_*^{I-e_3} g)h\dy+\io(\p_*^{I-e_3}\dd f)(\p_3\p_*^{I-e_3} g) \dd h\dy +\io(\p_*^{I-e_3}\dd f)(\p_*^{I-e_3} \dd g)\p_3h\dy\\
\lesssim&\|f\|_{8,*}\|g\|_{8,*}\|h\|_{3},
\end{aligned}
\end{equation}where the anisotropy of the function space $H_*^8$ is crucial in the last step because $\lee I-e_3\ree=6$ allows us to have two more tangential derivatives $\dd^2$. When $\dd$ in \eqref{IBIB7} is $\p_t$, this step should be done under time integral. See \eqref{n3bdry22} for example.

The analysis of $IB_0$ above also shows the advantage of using anisotropic Sobolev space as pointed out as an important conclusion in the survey article \cite{ChenSX2013} by Chen who first introduced the anisotropic Sobolev spaces in \cite{ChenSX1982}
\begin{quote}``For the nonlinear hyperbolic system with characteristic boundary conditions, the growth of one normal derivative on the boundary should be compensated by the decrease in regularity of two tangential derivatives. This is one of the advantages of the anisotropic Sobolev space that the standard Sobolev space fails to carry."
\end{quote}
In specific, if we start with the estimates of $\p_3^4$, then by \eqref{IBIB7} we need the control of $\p_3^3\dd^2$ where $\dd$ is a tangential derivative. To control the latter one, we need the control of $\p_3^2\dd^4$ again due to \eqref{IBIB7}. Repeatedly, we finally need to derive the estimates of $\dd^8$. In addition, the weighted derivative $(\sigma\p_3)$ is necessary in the interior estimates, e.g., in \eqref{weight1}. On the other hand, we also need the control of 4 normal derivatives in order to close the energy estimates of 8 tangential derivatives. So we find that the anisotropic Sobolev space exactly meets all of these requirements in our mechanism of reducing normal derivatives on the boundary.

Finally, the contribution of $\Delta_Q$ and $\Delta_v$ in $IB$ can be controlled by using the boundary energy $|A^{3i}\p_*^I\eta_i|_0$ together with either the trace lemma for anisotropic Sobolev spaces (cf. Lemma \ref{trace}) or similar technique as in \eqref{IBIB7}. Hence, the control of boundary integral $IB$ is finished.

\subsection{Strategy to prove the existence results}\label{statLWP}

It is natural to ask if a local existence result (without loss of regularity) can be proved in $H_*^m(\Omega)$ by using the energy $\EE_m(t)$. However, we find it difficult to find a straightforward proof as in the case of compressible Euler equations \cite{LuoZhang2020CWW,GL2021rel}, elastodynamics \cite{Zhang2021elasto} or incompressible MHD \cite{GuWang2016LWP}. Briefly speaking, this is due to the fact that the magnetic field is involved in the pressure part for compressible MHD. \textbf{The simultaenous appearance of magnetic field and compressibility leads to a mismatched term in the linearized equation and causes a loss of derivative in Picard iteration. Such difficulty never appears in either case of Euler equations, elastodynamics, or incompressible MHD.}

Alternatively, we may try to prove the local existence in anisotropic Sobolev spaces by using the existing local existence result obtained by Nash-Moser iteration. The idea is to approximate the given data, say $U_0$, by a sequence of ``sufficiently nice" data, say $\{U_0^{(n)}\}$, in some Sobolev space. Once we can do this\footnote{Indeed, as stated before Theorem \ref{CMHDLWPm}, it is still unknown how to approximate the given data by a sequence of smooth data satisfying the compatibility conditions up to infinite order. This may be postpone to a future work.}, the solutions corresponding to the ``sufficiently nice" data, say $\{U^{(n)}(t)\}$, may have convergence in some anisotropic Sobolev space by using the continuous dependence on data. The limit, say $U(t)$, is expected to be solution corresponding to the given data $U_0$.

However, the lifespan of $U^{(n)}(t)$, say $T^{(n)}$, may depend on $n$, so we need to continue the solution at $t=T^{(n)}$ and use the energy bounds in Theorem \ref{CMHDEE} to obtain a positive lower bound for the lifespans of $\{U^{(n)}(t)\}$. The continuation process requires a local existence result where the solution and the data lie in the same space. Since \cite{TW2020MHDLWP} only shows the existence theorem for the data in anisotropic Sobolev spaces and has a loss of regularity, we have to improve the result in \cite{TW2020MHDLWP} such that a unique $C^{\infty}$ solution exists if the initial data is $C^{\infty}$ and satisfies the compatibility conditions up to infinite order. Such improvement can be achieved, because there are two extra error terms $e_n'''$ and $D_{n+\frac12}\delta\Psi_n$ (cf. \cite[(4.26)-(4.27)]{TW2020MHDLWP}) that can be avoided in Lagrangian coordinates. See Section \ref{sect Nash} for detailed explanations for the Nash-Moser iteration, Section \ref{sect continue} for the proof of continuation criterion, Section \ref{sect limit} for the proof of Theorem \ref{CMHDLWPm} via a limit process.

\section{Preliminary lemmas}\label{sect lemma}

\subsection{Some geometric identities}

We record the explicit form of the matrix $A$ which will be repeatedly used.
\begin{align}
A= J^{-1}\begin{pmatrix}
\TP_2 \eta^2\p_3\eta^3-\p_3\eta^2\TP_2\eta^3\q \p_3 \eta^1\TP_2\eta^3-\TP_2\eta^1\p_3\eta^3\q \TP_2 \eta^1\p_3\eta^2-\p_3\eta^1\TP_2\eta^2 \\
\p_3 \eta^2\TP_1\eta^3-\TP_1\eta^2\p_3\eta^3\q \TP_1 \eta^1\p_3\eta^3-\p_3\eta^1\TP_1\eta^3\q \TP_1 \eta^1\TP_1\eta^2-\TP_1\eta^1\p_3\eta^2\\
\TP_1 \eta^2\TP_2\eta^3-\TP_2\eta^2\TP_1\eta^3\q \TP_2 \eta^1\TP_1\eta^3-\TP_1\eta^1\TP_2\eta^3\q \TP_1 \eta^1\TP_2\eta^2-\TP_2\eta^1\TP_1\eta^2
\end{pmatrix} \label{a}
\end{align}
Moreover, since $\hat{A}=JA$, and in view of \eqref{a}, we can write
\begin{align}
\hat{A}^{1i}=\epsilon^{ijk}\TP_2\eta_j\p_3\eta_k,~~ \hat{A}^{2i}=-\epsilon^{ijk}\TP_1\eta_j\p_3\eta_k,~~
\hat{A}^{3i}=\epsilon^{ijk}\TP_1\eta_j\TP_2\eta_k.\label{A}
\end{align}
Here, $\epsilon^{ijk}$ is the sign of the 3-permutation $(ijk)\in S_3$. We will repeatedly use that fact that $\hat{A}^{1,\cdot},\hat{A}^{2,\cdot}$ consist of the linear combination of $\pm\TP\eta\times\p_3\eta$ and $\hat{A}^{3\cdot}$ consists of $\TP\eta\times\TP\eta$.

We also record the following identity: Suppose $D$ is the derivative $\p$ or $\p_t$, then
\begin{equation}\label{Da}
DA^{li}=-A^{lr}~\p_kD\eta_r~A^{ki}.
\end{equation}

\subsection{Anisotropic Sobolev space}\label{anisotropic}

We list two preliminary lemmas on the basic properties of anisotropic Sobolev space.

\begin{lem}[Trace lemma for anisotropic Sobolev space]\label{trace}
Let $m\geq 1,~m\in\N^*$, then we have the following trace lemma for the anisotropic Sobolev space.
\begin{enumerate}
\item If $f\in H_*^{m+1}(\Omega)$, then its trace $f|_{\Gamma}$ belongs to $H^{m}(\Gamma)$ and satisfies
\[
|f|_m\lesssim\|f\|_{H_*^{m+1}(\Omega)}.
\]

\item There exists a linear continuous operator $\mathfrak{R}_{T}:H^{m}(\Gamma)\to H_*^{m+1}(\Omega)$ such that $(\mathfrak{R}_{T} g)|_{\Gamma}=g$ and \[
\|\mathfrak{R}_{T} g\|_{H_*^{m+1}(\Omega)}\lesssim|g|_m.
\]
\end{enumerate}
\begin{proof} See Ohno-Shizuta-Yanagisawa \cite[Theorem 1]{anisotropictrace}.
\end{proof}
\end{lem}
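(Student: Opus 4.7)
The plan is to establish both parts by tangential Fourier decomposition, which reduces the two assertions to one-dimensional estimates whose weights match the parabolic scaling suggested by the definition of $H_*^{m+1}$. Writing $f(y',y_3) = \sum_{k\in\Z^2}\hat f(k,y_3)e^{ik\cdot y'}$ and applying Parseval on $\T^2$, the trace inequality $|f|_m\lesssim \|f\|_{H_*^{m+1}}$ is equivalent to the family of $1$D inequalities
\begin{equation*}
\mu^{2m}|g(\pm 1)|^2 \lesssim \sum_{j+2i_3+i_4\leq m+1}\mu^{2j}\,\|\partial_3^{i_3}(\sigma\partial_3)^{i_4}g\|_{L^2(-1,1)}^2,\qquad \mu:=\sqrt{1+|k|^2},
\end{equation*}
for $g=\hat f(k,\cdot)$, while the existence of the extension operator amounts to producing, for every $k$, a profile $\phi(k,\cdot)$ with $\phi(k,\pm 1)=1$ whose $1$D anisotropic norm is controlled by $|\hat g(k)|^2\mu^{2m}$.

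To prove the $1$D trace inequality I would start from the sharp Morrey/Gagliardo--Nirenberg bound $|g(\pm 1)|^2\lesssim \|g\|_{L^2}^2 + \|g\|_{L^2}\|g'\|_{L^2}$ on $(-1,1)$ and apply Young's inequality in the weight $\mu$ to obtain
\begin{equation*}
\mu^{2m}|g(\pm 1)|^2 \lesssim \mu^{2(m+1)}\|g\|_{L^2}^2 + \mu^{2(m-1)}\|g'\|_{L^2}^2,
\end{equation*}
whose two terms appear exactly in the right-hand side above, corresponding respectively to $(j,i_3,i_4)=(m+1,0,0)$ and $(j,i_3,i_4)=(m-1,1,0)$, both at total weight $m+1$. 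The main obstacle is precisely this bookkeeping: the naive trace $|h|_{L^2(\Gamma)}\lesssim \|h\|_{L^2}^{1/2}\|\partial_3 h\|_{L^2}^{1/2}$ applied to $h=\partial_1^{j_1}\partial_2^{j_2}f$ with $j_1+j_2=m$ requires $\partial_3\partial_1^{j_1}\partial_2^{j_2}f\in L^2$, which has anisotropic weight $m+2$ and hence falls outside $H_*^{m+1}$. It is the \emph{geometric mean} in the Morrey inequality, combined with Young's inequality, that allows the weight $2$ assigned to $\partial_3$ to balance the weight $1$ carried by each tangential derivative.

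For the extension I would define $\mathfrak R_T g$ mode-by-mode via a boundary-layer profile at the parabolic scale $\mu^{-2}$, for instance
\begin{equation*}
(\mathfrak R_T g)(y',y_3):=\sum_{k}\hat g(k)\,\bigl[\chi_+(y_3)e^{-(1-y_3)\mu^2}+\chi_-(y_3)e^{-(1+y_3)\mu^2}\bigr]\,e^{ik\cdot y'},
\end{equation*}
where $\chi_\pm$ are smooth cutoffs isolating the two components of $\Gamma$. On the effective support $1-|y_3|\sim \mu^{-2}$ one has $\sigma\sim\mu^{-2}$, so $\sigma\partial_3$ acts like the identity on the profile while $\partial_3$ produces a factor $\mu^2$; a direct computation then gives $\|\partial_3^{i_3}(\sigma\partial_3)^{i_4}\phi(k,\cdot)\|_{L^2}^2\sim \mu^{4i_3-2}$, and together with the tangential weight $\mu^{2(i_1+i_2)}$ the maximal contribution (attained at $i_4=0$ and $i_1+i_2+2i_3=m+1$) is $|\hat g(k)|^2\mu^{2(m+1)-2}=|\hat g(k)|^2\mu^{2m}$. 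Summing over $k$ yields $\|\mathfrak R_T g\|_{H_*^{m+1}}\lesssim |g|_m$, and the condition $\phi(k,\pm 1)=1$ makes the boundary identity $(\mathfrak R_T g)|_\Gamma=g$ transparent.
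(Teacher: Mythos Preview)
Your argument is correct and entirely self-contained, whereas the paper simply cites Ohno--Shizuta--Yanagisawa \cite{anisotropictrace} without reproducing any proof. The tangential Fourier reduction followed by the one-dimensional trace inequality $|g(\pm 1)|^2\lesssim \|g\|_{L^2}^2+\|g\|_{L^2}\|g'\|_{L^2}$, split via Young's inequality as $\mu^{2(m+1)}\|g\|_{L^2}^2+\mu^{2(m-1)}\|g'\|_{L^2}^2$, is exactly the mechanism that explains why one normal derivative (anisotropic weight $2$) balances one tangential derivative on each side, and hence why the trace drops precisely one anisotropic order; this is the same parabolic scaling that underlies the cited result, but your presentation makes it explicit. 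For the extension, the boundary-layer profile at scale $\mu^{-2}$ is the right object and your power-counting $\|\partial_3^{i_3}(\sigma\partial_3)^{i_4}\phi_k\|_{L^2}^2\sim\mu^{4i_3-2}$ is correct (each $\partial_3$ contributes $\mu^2$, each $\sigma\partial_3$ contributes $O(1)$ on the layer, and the $L^2$-mass of the layer is $\mu^{-2}$). Two cosmetic points: the ordering $(\sigma\partial_3)^{i_4}\partial_3^{i_3}$ in the paper's norm differs from your $\partial_3^{i_3}(\sigma\partial_3)^{i_4}$ by lower-order commutators and gives an equivalent norm, so this is harmless; and since $\Gamma$ has two components, your formula for $\mathfrak R_T$ should carry separate coefficients $\hat g_+(k)$ and $\hat g_-(k)$ on the two profiles rather than a single $\hat g(k)$, but this is a trivial fix.
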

\begin{rmk}
The condition $m\geq 1$ is necessary and analogous result may not hold when $m=0$. One can see the importance of $m\geq 1$ from \eqref{IBIB7}, as a special case, where we need to integrate one tangential derivative by part and thus $m\geq 1$ is necessary.
\end{rmk}

\begin{lem}[Sobolev embedding lemma for anisotropic Sobolev space]\label{embedding}
We have the following inequalities
\begin{align*}
H^m(\Omega)\hookrightarrow H_*^m(\Omega)\hookrightarrow& H^{\lfloor m/2\rfloor}(\Omega),~~\forall m\in\N^*\\
\|u\|_{L^{\infty}}\lesssim\|u\|_{H_*^3(\Omega)},~~&\|u\|_{W^{1,\infty}}\lesssim\|u\|_{H_*^5(\Omega)},~~|u|_{W^{1,\infty}}\lesssim\|u\|_{H_*^{5}(\Omega)}.
\end{align*}
\begin{proof} See Trakhinin-Wang \cite[Lemma 3.3]{TW2020MHDLWP}. For the last inequality, using trace lemma, we have $|\TP u|_{L^{\infty}}\lesssim|\TP u|_{1.5}\lesssim\|\TP u\|_2\leq\|u\|_{H_*^5(\Omega)}.$
\end{proof}
\end{lem}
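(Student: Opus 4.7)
The first two embeddings come straight from the definition. For $H^m(\Omega)\hookrightarrow H_*^m(\Omega)$, I would expand each summand $(\sigma\p_3)^{i_4}\p_1^{i_1}\p_2^{i_2}\p_3^{i_3}f$ via Leibniz into a sum of terms of the form $\pi(y_3)\p_1^{i_1}\p_2^{i_2}\p_3^{i_3+k}f$ with $k\leq i_4$ and $\pi\in C^\infty(\bar\Omega)$; each such term is controlled by $\|f\|_{H^{i_1+i_2+i_3+i_4}}$, which is bounded by $\|f\|_{H^m}$ since $i_1+i_2+i_3+i_4\leq i_1+i_2+2i_3+i_4\leq m$. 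For the converse embedding $H_*^m(\Omega)\hookrightarrow H^{\lfloor m/2\rfloor}(\Omega)$, every ordinary derivative $\p_1^{i_1}\p_2^{i_2}\p_3^{i_3}f$ with total order $\leq\lfloor m/2\rfloor$ corresponds to the multi-index $(i_1,i_2,i_3,0)$ with weight $i_1+i_2+2i_3\leq 2(i_1+i_2+i_3)\leq m$, and is therefore literally one of the summands defining $\|f\|_{H_*^m}^2$.

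For the $L^\infty$ embedding $\|u\|_{L^\infty(\Omega)}\lesssim\|u\|_{H_*^3(\Omega)}$, I would combine a one-dimensional Gagliardo--Nirenberg type inequality in the normal direction with a two-dimensional Sobolev embedding on the tangential torus. Specifically, the 1D bound $\|u(y',\cdot)\|_{L^\infty_{y_3}}\lesssim \|u(y',\cdot)\|_{L^2_{y_3}}^{1/2}\|u(y',\cdot)\|_{H^1_{y_3}}^{1/2}$ together with $H^{2}(\mathbb T^2)\hookrightarrow L^\infty(\mathbb T^2)$ and the Minkowski inequality for Bochner spaces yields
\begin{align*}
\|u\|_{L^\infty(\Omega)}\lesssim \|u\|_{H^{2}_{y'}L^2_{y_3}}^{1/2}\,\|u\|_{H^{2}_{y'}H^1_{y_3}}^{1/2}.
\end{align*}
The first factor is controlled by $\|u\|_{H_*^2}$ (since pure tangential derivatives $\p_1^{i_1}\p_2^{i_2}u$ of order $\leq 2$ carry weight $i_1+i_2\leq 2$), and the second by $\|u\|_{H_*^4}$ (since $\p_1^{i_1}\p_2^{i_2}\p_3 u$ carries weight $i_1+i_2+2\leq 4$). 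The standard interpolation inequality for anisotropic Sobolev norms $\|u\|_{H_*^3}^2\lesssim \|u\|_{H_*^2}\|u\|_{H_*^4}$, which I would verify by Plancherel after extending $u$ by reflection across $y_3=\pm 1$ and by Fourier series on $\mathbb T^2$, then gives $\|u\|_{L^\infty}\lesssim(\|u\|_{H_*^2}\|u\|_{H_*^4})^{1/2}\lesssim\|u\|_{H_*^3}$.

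For the $W^{1,\infty}$ embedding, I would apply the $L^\infty$ bound just proved to $u$ and to each first-order partial. A tangential partial shifts the multi-index by $e_1$ or $e_2$, so $\p_j u\in H_*^{m-1}$ whenever $u\in H_*^m$ (for $j=1,2$), and therefore $\|\p_j u\|_{L^\infty}\lesssim\|\p_j u\|_{H_*^3}\lesssim\|u\|_{H_*^4}$. The normal partial costs two units because the weight $2i_3$ in the definition of $\langle I\rangle$ increases by $2$ when $i_3$ goes up by $1$, so $\p_3 u\in H_*^{m-2}$ and $\|\p_3 u\|_{L^\infty}\lesssim\|\p_3 u\|_{H_*^3}\lesssim\|u\|_{H_*^5}$. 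Taking the maximum of these four bounds produces the stated estimate $\|u\|_{W^{1,\infty}}\lesssim\|u\|_{H_*^5}$.

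The main obstacle is the $L^\infty$ embedding at the precise threshold $H_*^3$, which cannot be obtained by any direct chain of integer-order Sobolev embeddings (the crudest such chain forces $H_*^4$). One must either carefully justify the interpolation inequality $\|u\|_{H_*^3}^2\lesssim \|u\|_{H_*^2}\|u\|_{H_*^4}$ near $y_3=\pm 1$, where $\sigma$ degenerates and a reflection or density extension is needed to avoid losing control of the weighted derivatives, or equivalently invoke a fractional anisotropic Sobolev embedding $\|u\|_{L^\infty}\lesssim\|u\|_{H^{1+\epsilon}_{y'}H^{1/2+\epsilon}_{y_3}}$ matched against $H_*^3$ through a Littlewood--Paley decomposition in the Fourier variables $(\xi',\xi_3)$; either route requires the sharp anisotropic scaling in which a normal derivative costs twice a tangential one.
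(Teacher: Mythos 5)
Your derivations of $H^m(\Omega)\hookrightarrow H_*^m(\Omega)$, of $H_*^m(\Omega)\hookrightarrow H^{\lfloor m/2\rfloor}(\Omega)$, and of the $W^{1,\infty}$ bound assuming the $L^\infty$ bound are all correct; the bookkeeping that a tangential derivative costs one unit of $\langle I\rangle$ and a normal derivative two units matches the definition. Note also that the paper does not prove this lemma but simply cites Trakhinin--Wang \cite[Lemma 3.3]{TW2020MHDLWP}, so there is no in-text proof to compare yours against.

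The $L^\infty$ argument, however, has a direction error that cannot be repaired by sharpening the same idea. From the (correct) Agmon-type bound $\|u\|_{L^\infty}\lesssim(\|u\|_{H_*^2}\|u\|_{H_*^4})^{1/2}$ you invoke the interpolation inequality $\|u\|_{H_*^3}^2\lesssim\|u\|_{H_*^2}\|u\|_{H_*^4}$ to conclude $(\|u\|_{H_*^2}\|u\|_{H_*^4})^{1/2}\lesssim\|u\|_{H_*^3}$. But log-convexity of Sobolev norms says the geometric mean \emph{dominates} the middle norm, $\|u\|_{H_*^3}\lesssim(\|u\|_{H_*^2}\|u\|_{H_*^4})^{1/2}$, not the reverse, and the reverse is in fact false: take $u=Au_1+Bu_N$ with $u_k$ a fixed profile in $y_3$ times a tangential Fourier mode of frequency $k$, and choose $A\sim BN^3$; then $\|u\|_{H_*^2}\|u\|_{H_*^4}\sim B^2N^7$ while $\|u\|_{H_*^3}^2\sim B^2N^6$, so the ratio is of size $N$ and blows up. Hence your chain does not close at $H_*^3$ and, as you acknowledge at the end, this route stops at $H_*^4$. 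Your alternative route, matching a fractional anisotropic embedding $H^{1+\epsilon}_{y'}H^{1/2+\epsilon}_{y_3}\hookrightarrow L^\infty$ against $H_*^3$, has the right scaling (a normal derivative costing twice a tangential one), but it is not actually carried out, and the degeneracy of $\sigma$ near $\Gamma$ together with the boundary trace $u|_\Gamma\in H^2(\Gamma)\hookrightarrow L^\infty(\Gamma)$ from Lemma \ref{trace} requires genuine work there. As written, the $L^\infty$ embedding at the sharp threshold $m=3$ is not established, and consequently neither is the $W^{1,\infty}$ bound.
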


\section{Control of purely non-weighted normal derivatives}\label{sect n4}

In this section, we prove the following estimates by the standard Alinhac good unknown argument.
\begin{prop}\label{prop n4}
The following energy inequality holds
\begin{equation}\label{n4n4}
\|\p_3^4 v\|_0^2+\left\|\p_3^4\left(J^{-1}\bp\eta\right)\right\|_0^2+\|\p_3^4 q\|_0^2+\frac{c_0}{4}\left|A^{3i}\p_3^4\eta_i\right|_0^2\bigg|_{t=T}\lesssim \PP_0+ P(\EE(T))\int_0^T P(\EE(t))\dt.
\end{equation}
\end{prop}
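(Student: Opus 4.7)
The plan is to follow the strategy laid out in Section \ref{stat} for the special case $\partial_*^I = \partial_3^4$ (so $\langle I\rangle = 8$). First, I would apply $\partial_3^4$ to the momentum equation and rewrite the result in terms of the modified Alinhac good unknowns $\QQ$ and $\VV$ from \eqref{goodqq}--\eqref{goodvv}, yielding
\[
\rho_0 \p_t \VV = -J\pa\QQ + \bp(\p_3^4 b) + \FF,
\]
with $\|\FF\|_0 \le P(\EE(t))$ justified by the explicit forms of $\Delta_Q, \Delta_v$ together with Lemma \ref{b} and the identity \eqref{Da}. Taking the $L^2(\Omega)$ inner product with $\VV$ and integrating by parts produces the interior terms $K_1$ and $I_1$ and the boundary term $IB$ as in Section \ref{stat2}.

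For the \emph{interior estimate} I would execute the cancellation scheme of Section \ref{stat3}: expanding $\VV = \p_3^4 v + O_{L^2}(1)$ and using $b = J^{-1}\bp\eta$, $\p_t J = J\,\diva v$, the term $K_1$ produces $-\tfrac12 \tfrac{d}{dt}\|\p_3^4(J^{-1}\bp\eta)\|_0^2$ plus an uncontrollable remainder $K_{11}$ involving $\p_3^4(\diva v)$. Then, expanding $Q = q + \tfrac12|J^{-1}\bp\eta|^2$ and using the continuity equation $\tfrac{JR'(q)}{\rho_0}\p_t q = -\diva v$, the term $I_1$ yields the energy $-\tfrac12\tfrac{d}{dt}\io \tfrac{J^2R'(q)}{\rho_0}|\p_3^4 q|^2\dy$ and precisely the term $-K_{11}$, giving exact cancellation. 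Combined with the definition of $Q$, this produces the $\|\p_3^4 q\|_0^2$ and $\|\p_3^4(J^{-1}\bp\eta)\|_0^2$ contributions on the left of \eqref{n4n4}, while the energy of $\VV$ plus a low-order commutator controls $\|\p_3^4 v\|_0^2$.

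The \emph{boundary estimate} for $IB$ is the main obstacle and must be split as in \eqref{bdry1} into $IB_0$ (containing $\p_3^4 Q$) and $IB_1$ (containing $\p_3^4\eta \cdot \pa Q$), modulo contributions from $\Delta_Q,\Delta_v$ controlled by Lemma \ref{trace} and the boundary energy. For $IB_1$, since $\TP_L Q|_\Gamma = 0$ for $L=1,2$, only the normal part of $\pa Q$ survives, and the computation in \eqref{IBIB1} produces $-\tfrac12\tfrac{d}{dt}\ig (-J\tfrac{\p Q}{\p N})|A^{3i}\p_3^4\eta_i|^2\dyy$; the Rayleigh-Taylor condition \eqref{sign2} then yields the boundary energy $\tfrac{c_0}{4}|A^{3i}\p_3^4\eta_i|_0^2$, with the remaining two terms cancelling via $\p_t A^{3i} = -A^{3r}\p_l v_r A^{li}$ plus a time integrable remainder. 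For $IB_0 = -\ig JA^{3i}N_3(\p_3^4 Q)\VV_i \dyy$, I would perform the normal-to-tangential reduction of Section \ref{stat5}: use \eqref{relationv} to replace $A^{3i}\p_3^3 \p_3 v_i$ by $\p_3^3$-derivatives of $-\tfrac{JR'(q)}{\rho_0}\p_t q$ and $A^{Li}\TP_L v_i$, and use \eqref{relationq} (multiplying through by $\p_3\eta_i \hat{A}^{3i}=J$) to replace $J\,\p_3^3 \p_3 Q$ by $\p_3^3$-derivatives of $\pA^i Q = -\rho_0 \p_t v^i - \bp(J^{-1}\bp\eta^i)$ plus $\hat{A}^{Li}\TP_L Q$ (the latter killed by $\TP Q|_\Gamma=0$ at top order). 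This reduces every top-order factor on the boundary to a tangential derivative $\mathfrak{D} \in \{\p_t, \TP, \bp\}$ (using $b_0^3|_\Gamma=0$), and all surviving terms take the schematic form $\ig N_3(\p_3^3 \mathfrak{D} f)(\p_3^3 \mathfrak{D} g)h\dyy$.

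The hardest technical step is closing these reduced boundary integrals within the anisotropic scale of $\EE(t)$. Following \eqref{IBIB7}, I would rewrite each such surface integral as an interior integral via the divergence theorem, then integrate $\mathfrak{D}$ by parts to land on configurations of the form $\io (\p_3^3 f)(\p_3^3 \mathfrak{D}^2 g) h$ and its analogues, which are bounded by $\|f\|_{8,*}\|g\|_{8,*}\|h\|_3$ precisely because the two tangential derivatives $\mathfrak{D}^2$ are absorbed by the anisotropic norm at the cost of lowering normal regularity by one; when $\mathfrak{D}=\p_t$ this step is carried out under the time integral. Collecting interior and boundary contributions, integrating in time, and invoking the coercivity given by \eqref{small} and \eqref{sign2} yields \eqref{n4n4}. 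I anticipate that the most delicate bookkeeping is tracking which commutators produced by the good-unknown reformulation genuinely contribute at top order versus those absorbable into $\FF$ or into the $\Delta_Q, \Delta_v$ corrections, and ensuring that in each application of the reduction \eqref{relationv}--\eqref{relationq} the leftover lower-order terms truly cost at most one normal derivative so that the anisotropic budget closes.
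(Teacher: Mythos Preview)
Your proposal is correct and follows the same route as the paper's Section \ref{sect n4}: Alinhac good unknowns, the $K_{11}$ cancellation in the interior, and the normal-to-tangential boundary reduction via \eqref{relationv}--\eqref{relationq} followed by the divergence-theorem trick \eqref{IBIB7}. The only unnecessary complication is your invocation of the \emph{modified} good unknowns with $\Delta_Q,\Delta_v$: for $\p_*^I=\p_3^4$ the commutators in \eqref{n4proof} are already bounded directly in $L^2$ by $P(\|\eta\|_4)\|f\|_4$ (the anisotropic obstruction of Section \ref{stat4} only arises when $\p_*^I$ contains tangential or weighted derivatives), so the paper works with the unmodified good unknowns \eqref{n4good} throughout and there are no $\Delta$-contributions to track on the boundary.
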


\subsection{Evolution equation of Alinhac good unknowns}\label{n4AGU}

We first compute the estimates of purely normal derivatives. When $\lee I\ree=8$, the purely non-weighted normal derivative should be $\p_*^I=\p_3^4$. First we introduce the following Alinhac good unknowns of $v$ and $Q$ with respect to $\p_3^4$
\begin{equation}\label{n4good}
\VV_i:=\p_3^4 v_i-\p_3^4\eta_p\,A^{lp}\,\p_l v_i,~~\QQ:=\p_3^4 Q-\p_3^4\eta_p\,A^{lp}\,\p_l Q.
\end{equation}
Then we have that for any function $f$
\begin{equation}\label{n4proof}
\begin{aligned}
\p_3^4(\pa^{i}f)&=\pa^i(\p_3^4 f)+(\p_3^4A^{li})\p_l f+[\p_3^4,A^{li},\p_l f] \\
&=\pa^{i}(\p_3^4 f)-\p_3^3(A^{lp}\,\p_3\p_{m}\eta_{p}\,A^{mi})\p_l f+[\p_3^4,A^{li},\p_{l} f] \\
&=\pa^{i}(\underbrace{\p_3^4 f-\p_3^4\eta_{p}\,A^{lp}\,\p_l f}_{\text{good unknowns}})+\underbrace{\p_3^4 \eta_{p}\pa^{i}(\pa^{p} f)-([\p_3^3,A^{lp}A^{mi}]\p_3\p_{m}\eta_{p})\p_l f+[\p_3^4,A^{li},\p_{l} f] }_{=:C^{i}(f)},
\end{aligned}
\end{equation} and thus
\begin{equation}\label{n4grad}
\nabla_A \cdot\VV=\p_3^4(\diva v)-C^i(v_i),~~~\nabla_A \QQ=\p_3^4(\nabla_A Q)-C(Q),
\end{equation}where the commutator satisfies the estimate
\begin{equation}\label{n4error}
\|C(f)\|_4\lesssim P(\|\eta\|_4)\|f\|_4.
\end{equation}

Now taking $\p_3^4$ in the second equation of \eqref{CMHDL} and invoking \eqref{n4good} and \eqref{n4grad}, we get the evolution equation of the Alinhac good unknowns
\begin{equation}\label{n4eq}
R\p_t\VV-J^{-1}\bp\p_3^4\left(J^{-1}\bp\eta\right)+\nabla_A \QQ=\underbrace{\left[R,\p_3^4\right]\p_t v+\left[\p_3^4,J^{-1}\bp\right] b-C(Q)-R\p_t(\p_3^4\eta\cdot\nabla_A v)}_{=:\FF}.
\end{equation}

Taking $L^2(\Omega)$-inner product of \eqref{n4eq} and $J\VV$ and using $\rho_0=RJ$, we get the energy identity
\begin{equation}\label{n4energy0}
\frac{1}{2}\ddt\io\rho_0\left|\VV\right|^2\dy=\io\bp\p_3^4(J^{-1}\bp\eta)\cdot\VV-\io(\nabla_{\hat{A}}\QQ)\cdot\VV+\io J\FF\cdot\VV.
\end{equation}

\subsection{Interior estimates}

The third integral on the RHS of \eqref{n4energy0} can be directly controlled
\begin{equation}\label{n43}
\io J\FF\cdot\VV\lesssim\|J\FF\|_0\|\VV\|_0\lesssim P(\|\rho_0\|_4,\|b_0\|_4,\|\eta\|_4,\|J^{-1}\bp\eta\|_4,\|Q\|_4,\|v\|_4,\|\p_t v\|_3)\|\VV\|_0.
\end{equation}

The first integral on the RHS of \eqref{n4energy0} gives the energy of magnetic field $b=J^{-1}\bp\eta$ after integrating $\bp$ by parts. Note that $b_0^3|_{\Gamma}=0$ and $\dive b_0=0$, there will be no boundary integral. In specific, we have
\begin{equation}\label{n410}
\begin{aligned}
&\io\bp\p_3^4(J^{-1}\bp\eta)\cdot\VV\dy=-\io\p_3^4(J^{-1}\bp\eta)\cdot\bp\VV\dy\\
=&-\io\p_3^4(J^{-1}\bp\eta)\cdot\bp\p_3^4 v\dy+\underbrace{\io\p_3^4(J^{-1}\bp\eta)\cdot\bp(\p_3^4\eta\cdot\pa v)\dy}_{=:L_1}\\
=&-\io J\p_3^4(J^{-1}\bp\eta)\cdot \left(J^{-1}\bp\p_3^4 \p_t\eta\right)\dy+L_1\\
=&-\io J\p_3^4(J^{-1}\bp\eta)\cdot \p_3^4 \p_t(J^{-1}\bp\eta)\dy\underbrace{-\io J\p_3^4(J^{-1}\bp\eta)\cdot \left[J^{-1}\bp,\p_3^4\p_t \right]\eta\dy}_{K_1}+L_1\\
=&-\frac12\ddt\io J\left|\p_3^4(J^{-1}\bp\eta)\right|^2\dy+\frac12\io\p_t J\left|\p_3^4(J^{-1}\bp\eta)\right|^2\dy+K_1+L_1.
\end{aligned}
\end{equation}

The term $L_1$ can be directly controlled
\begin{equation}\label{n4L1}
L_1\lesssim P\left(\|\bp\eta\|_4,\|\eta\|_4,\|b_0\|_4,\|v\|_4\right).
\end{equation}

The term $K_1$ produces a higher order term when $\p_3^4\p_t$ falls on $J^{-1}$. We invoke $\p_t J=J\diva v$ to get
\begin{equation}\label{n4K10}
\begin{aligned}
&-\left[J^{-1}\bp,\p_3^4 \p_t\right]\eta\\
=&\p_3^4\p_t(J^{-1})\,\bp\eta+\sum_{N=0}^3\p_3^N\p_t(J^{-1})\,\p_3^{4-N}b_0\cdot\p\eta+\sum_{M=0}^{3}\p_t\left(\p_3^M(J^{-1}b_0^l)\,\p_l\p_3^{4-M}\eta\right)\\
=&-J^{-1}\p_3^4(\diva v)\,\bp\eta\\
&+\underbrace{\left([\p_3^4, J^{-1}]\diva v\right)\bp\eta+\sum_{N=0}^3\p_3^N\p_t(J^{-1})(\p_3^{4-N}b_0^l)(\p_l\eta)+\sum_{M=0}^{3}\p_t\left(\p_3^M(J^{-1}b_0^l)\p_l\p_3^{4-M}\eta\right)}_{KL_1},
\end{aligned}
\end{equation} and thus
\begin{equation}\label{n4K1}
\begin{aligned}
K_1=&\underbrace{-\io  J\p_3^4(J^{-1}\bp\eta)\cdot\left(J^{-1}\bp\eta\right)\p_3^4(\diva v)\dy}_{K_{11}}+\io J\p_3^4(J^{-1}\bp\eta)\cdot(KL_1)\\
\lesssim& K_{11}+\|J\|_{L^{\infty}}\|J^{-1}\bp\eta\|_4\|KL_1\|_0\\
\lesssim& K_{11}+P\left(\|\bp\eta\|_4,\|\eta\|_4,\|b_0\|_4\right).
\end{aligned}
\end{equation}

Summarizing \eqref{n410}-\eqref{n4K1}, we get the following estimates
\begin{equation}\label{n41}
\io\bp\p_3^4(J^{-1}\bp\eta)\cdot\VV\dy\lesssim-\frac12\ddt\io J\left|\p_3^4(J^{-1}\bp\eta)\right|^2\dy+K_{11}+P\left(\|\bp\eta\|_4,\|\eta\|_4,\|b_0\|_4,\|v\|_4\right).
\end{equation}
We note that the term $K_{11}$ cannot be directly controlled, but will be cancelled by another term produced by $-\io(\nabla_{\hat{A}} \QQ)\cdot\VV$.

Next we analyze the second integral on the RHS of \eqref{n4energy0}. Integraing by parts and invoking Piola's identity $\p_l \hat{A}^{li}=0$, we get
\begin{equation}\label{n420}
\begin{aligned}
-\io(\nabla_{\hat{A}}\QQ)\cdot\VV\dy=\io J\QQ(\nabla_A\cdot\VV)\dy-\ig J\QQ A^{li}N_l\VV_i\dyy=:I+IB.
\end{aligned}
\end{equation}

Plugging \eqref{n4good} and \eqref{n4grad} as well as $Q=q+\frac12|b|^2$ into $I$, we get
\begin{equation}\label{I0}
\begin{aligned}
I=&\io J\p_3^4 q\,\p_3^4(\diva v)\dy+\io J\p_3^4\left(\frac12\left|J^{-1}\bp\eta\right|^2\right)\p_3^4(\diva v)\dy\\
&-\io \p_3^4\eta_p\,\hat{A}^{lp}\p_lQ\,\p_3^4(\diva v)\dy-\io\p_3^4 Q\,C(v)\dy\\
=:&I_1+I_2+I_3+I_4.
\end{aligned}
\end{equation}

The term $I_4$ can be directly controlled by using \eqref{n4error}
\begin{equation}\label{I4}
I_4\lesssim\|Q\|_4\|C(v)\|_0\lesssim P(\|\eta\|_4)\|Q\|_4\|v\|_4.
\end{equation}

The term $I_1$ gives the energy of $q$ by invoking $\diva v=-\dfrac{\p_t R}{R}=-\dfrac{JR'(q)}{\rho_0}\p_t q$
\begin{equation}\label{I1}
\begin{aligned}
I_1=&-\io J\p_3^4 q\,\p_3^4\left(\frac{JR'(q)}{\rho_0}\p_t q\right)\dy=-\io\frac{J^2R'(q)}{\rho_0}\p_3^4 q\,\p_3^4\p_t q\dy-\io J\p_3^4 q\left(\left[\p_3^4,\frac{JR'(q)}{\rho_0}\right]\p_t q\right)\dy\\
=&-\frac{1}{2}\ddt\io\frac{J^2R'(q)}{\rho_0}\left|\p_3^4 q\right|^2\dy+\frac12\io\p_t\left(\frac{J^2R'(q)}{\rho_0}\right)\left|\p_3^4 q\right|^2\dy-\io J\p_3^4 q\left(\left[\p_3^4,\frac{JR'(q)}{\rho_0}\right]\p_t q\right)\dy\\
\lesssim&-\frac{1}{2}\ddt\io\frac{J^2R'(q)}{\rho_0}\left|\p_3^4 q\right|^2\dy+ P(\|q\|_{8,*},\|\rho_0\|_4,\|\eta\|_4).
\end{aligned}
\end{equation}

The term $I_2$ will produce another higher order term to cancel with $K_{11}$
\begin{equation}\label{I2}
\begin{aligned}
I_2=&\underbrace{\io J\p_3^4\left(J^{-1}\bp\eta\right) \cdot\left(J^{-1}\bp\eta\right)\p_3^4(\diva v)}_{\text{exactly cancel with }K_{11}}\dy\\
&+\io \sum_{N=1}^3\binom{4}{N}J\p_3^N\left(J^{-1}\bp\eta\right) \cdot\p_3^{4-N}\left(J^{-1}\bp\eta\right)\p_3^4(\diva v)\dy\\
=&-K_{11}-\io \sum_{N=1}^3\binom{4}{N}J\p_3^N\left(J^{-1}\bp\eta\right) \cdot\p_3^{4-N}\left(J^{-1}\bp\eta\right)\p_3^4\left(\frac{JR'(q)}{\rho_0}\p_tq\right)\dy\\
=&-K_{11}-\io \sum_{N=1}^3\binom{4}{N}\left(\frac{J^2R'(q)}{\rho_0}\right)\p_3^N\left(J^{-1}\bp\eta\right) \cdot\p_3^{4-N}\left(J^{-1}\bp\eta\right)\p_3^4\p_tq\dy\\
&+\io \sum_{N=1}^3\binom{4}{N}J\p_3^N\left(J^{-1}\bp\eta\right) \cdot\p_3^{4-N}\left(J^{-1}\bp\eta\right)\left(\left[\p_3^4,\frac{JR'(q)}{\rho_0}\right]\p_tq\right)\dy\\
=:&-K_{11}+I_{21}+I_{22}.
\end{aligned}
\end{equation}

We should control $I_{21}$ by integrating $\p_t$ by parts under time integral
\begin{equation}\label{I21}
\begin{aligned}
\int_0^TI_{21}\overset{\p_t}{=}&\int_0^T\io \sum_{N=1}^3\binom{4}{N}\p_t\left(\frac{J^2R'(q)}{\rho_0}\right)\p_3^N\left(J^{-1}\bp\eta\right) \cdot\p_3^{4-N}\left(J^{-1}\bp\eta\right)\p_3^4q\dy\\
&+\int_0^T\io \sum_{N=1}^3\binom{4}{N}\left(\frac{J^2R'(q)}{\rho_0}\right)\p_t\p_3^N\left(J^{-1}\bp\eta\right) \cdot\p_3^{4-N}\left(J^{-1}\bp\eta\right)\p_3^4q\dy\\
&-\io\sum_{N=1}^3\binom{4}{N}\left(\frac{J^2R'(q)}{\rho_0}\right)\p_3^N\left(J^{-1}\bp\eta\right) \cdot\p_3^{4-N}\left(J^{-1}\bp\eta\right)\p_3^4q\dy\bigg|^T_0\\
\lesssim&\int_0^T P(\|J^{-1}\bp\eta\|_{4},\|\p_t(J^{-1}\bp\eta)\|_{3},\|q\|_4)+\PP_0+\|J^{-1}\bp\eta\|_{3}^2\|\p_3^4q\|_0\\
\lesssim&\PP_0+\int_0^TP(\EE(t))\dt+\varepsilon\|\p_3^4 q\|_0^2+\int_0^T\|\p_t(J^{-1}\bp\eta)\|_{3}^2\leq\PP_0+\int_0^TP(\EE(t))\dt+\varepsilon\|\p_3^4 q\|_0^2.
\end{aligned}
\end{equation} Then $I_{22}$ can be directly controlled since at most three $\p_3$'s fall on $\p_t q$.
\begin{equation}\label{I22}
I_{22}\lesssim \|J^{-1}\bp\eta\|_3^2\|q\|_{7,*}.
\end{equation}

The term $I_3$ should also be controlled under time integral. We have
\begin{equation}\label{I3}
\begin{aligned}
\int_0^TI_3=&\int_0^T\io \frac{JR'(q)}{\rho_0}\p_3^4\eta_p\,\hat{A}^{lp}\p_l Q\,\p_3^4\p_t q\dy+\underbrace{\int_0^T\io \p_3^4\eta_p\,\hat{A}^{lp}\p_l Q\,\left[\p_3^4, \frac{JR'(q)}{\rho_0}\right]\p_t q}_{L_2}\dy\\
\overset{\p_t}{=}&-\int_0^T\io\p_t\left( \frac{JR'(q)}{\rho_0}\,\p_3^4\eta_p\,\hat{A}^{lp}\p_l Q\right)\p_3^4 q\dy+\io\frac{JR'(q)}{\rho_0}\p_3^4\eta_p\,\hat{A}^{lp}\p_l Q\,\p_3^4 q\dy\bigg|^T_0+L_2\\
\lesssim&\PP_0+\left\|\frac{JR'(q)}{\rho_0}\,A\,\p Q\right\|_{L^{\infty}}\|\p_3^4 q\|_0\|\p_3^4\eta\|_0+\int_0^TP\left(\|q\|_{8,*},\|\eta\|_4,\|v\|_4,\|\rho_0\|_4\right)\dt.\\
\lesssim&\PP_0+\int_0^T P(\EE(t))\dt+\left\|\frac{JR'(q)}{\rho_0}\,A\,\p Q\right\|_{L^{\infty}}\|\p_3^4 q\|_0\int_0^T\|\p_3^4 v(t)\|_0\dt\\
\lesssim&\PP_0+P(\EE(t))\int_0^T P(\EE(t))\dt,
\end{aligned}
\end{equation}where we use $\p^4\eta|_{t=0}=0$ in the last step. Summarizing \eqref{I4}-\eqref{I3} and choosing $\eps>0$ suitably small, we get the estimates of $I$ under time integral
\begin{equation}\label{I}
\int_0^TI\dt\lesssim-\frac{1}{2}\io\frac{J^2R'(q)}{\rho_0}\left|\p_3^4 q\right|^2\dy\bigg|^T_0+\PP_0+P(\EE(t))\int_0^T P(\EE(t))\dt.
\end{equation}

\subsection{Boundary estimates}\label{sect n4bdry}

To finish the estimates of purely non-weighted normal derivative, it remains to control the boundary integral $IB$ in \eqref{n420} which reads
\begin{equation}\label{IBB}
\begin{aligned}
-\ig J\QQ A^{li}N_l\VV_i\dyy=&-\ig \hat{A}^{3i}N_3\,\p_3^4Q\,\VV_i\dyy\\
&+\ig \hat{A}^{3i}N_3\p_3^4\eta_p\,A^{3p}\p_3Q\,\p_3^4 v_i\dyy-\ig \hat{A}^{3i}N_3\p_3^4\eta_p\,A^{3p}\p_3Q\,(\p_3^4 \eta_r~A^{mr}~\p_m v_i)\dyy\\
=:&IB_0+IB_1+IB_2.
\end{aligned}
\end{equation}

First, $IB_1$ will produce the boundary energy with the help of Rayleigh-Taylor sign condition \eqref{sign} and the error terms will be cancelled with $IB_2$. In specific, we have
\begin{equation}\label{IB10}
\begin{aligned}
IB_1=&-\ig \left(-\frac{\p Q}{\p N}\right)JA^{3i}\p_3^4\eta_p\,A^{3p}\p_3^4 \p_t\eta_i\dyy\\
=&-\frac{1}{2}\ddt\ig\left(-J\frac{\p Q}{\p N}\right)\left|A^{3i}\p_3^4\eta_i\right|^2\dyy\\
&-\frac12\ig\p_t\left(J\frac{\p Q}{\p N}\right)\left|A^{3i}\p_3^4\eta_i\right|^2\dyy+\ig\left(-J\frac{\p Q}{\p N}\right)\p_tA^{3i}\,\p_3^4\eta_p\,A^{3p}\p_3^4\eta_i\dyy\\
=:&IB_{11}+IB_{12}+IB_{13}.
\end{aligned}
\end{equation}

Invoking Rayleigh-Taylor sign condition, we get
\begin{equation}\label{IB11}
\int_0^TIB_{11}\dt\lesssim-\frac{c_0}{4}\ig\left|A^{3i}\p_3^4\eta_i\right|^2\dyy\bigg|^T_0,
\end{equation}and thus the term $IB_{12}$ can be directly controlled by the boundary energy
\begin{equation}\label{IB12}
IB_{12}\lesssim|\p_t(J\p_3 Q)|_{L^{\infty}}\left|A^{3i}\p_3^4\eta_i\right|_0^2\lesssim P(\EE(t)).
\end{equation}

Then we plug $\p_tA^{3i}=-A^{3r}~\p_m v_r~A^{mi}$ into $IB_{13}$ to get
\begin{align}
\label{IB13} IB_{13}=&\ig\left(\frac{\p Q}{\p N}\right)A^{3r}\p_m v_r\,\hat{A}^{mi}\p_3^4\eta_p\, A^{3p}\p_3^4\eta_i\dyy,
\end{align}and this term exactly cancel with $IB_2$ if we replace the indices $(r,i)$ by $(i,r)$.

It now remains to control $IB_0$. We have
\begin{equation}\label{IB00}
IB_0=-\ig N_3J~\p_3^4Q~(A^{3i}\p_3^4v_i)\dyy+\ig \hat{A}^{3i}N_3\p_3^4 Q\,\p_3^4\eta_p\, A^{lp}\p_l v_i\dyy=:IB_{01}+IB_{02}.
\end{equation}

To control $IB_0$, we shall differentiate the following relations
\begin{align}
\label{vbdry} A^{3i}\p_3v_i=&\diva v-A^{1i}\TP_1 v_i-A^{2i}\TP_2v_i=-\frac{JR'(q)}{\rho_0}\p_t q-A^{1i}\TP_1 v_i-A^{2i}\TP_2v_i.
\end{align}

In $IB_{01}$, we use the relation \eqref{vbdry} to get
\begin{equation}\label{n4vbdry}
\begin{aligned}
A^{3i}\p_3^4v_i=&\p_3^3(A^{3i}\p_3v_i)-\p_3^3A^{3i}\,\p_3 v_i-3\p_3^2A^{3i}\,\p_3^2 v_i-3\p_3 A^{3i}\,\p_3^3 v_i\\
=&-\p_3^3\left(\frac{JR'(q)}{\rho_0}\p_t q\right)-\sum_{L=1}^2\p_3^3(A^{Li}\TP_Lv_i)-\p_3^3A^{3i}\,\p_3 v_i-3\p_3^2A^{3i}\,\p_3^2 v_i-3\p_3 A^{3i}\,\p_3^3 v_i,
\end{aligned}
\end{equation}and thus $IB_{01}$ becomes
\begin{equation}\label{IB010}
\begin{aligned}
IB_{01}=&\ig N_3J~\p_3^4Q~\p_3^3\left(\frac{JR'(q)}{\rho_0}\p_t q\right)+\sum_{L=1}^2\ig N_3J~\p_3^4Q~\p_3^3(A^{Li}\TP_Lv_i)\\
&+\ig N_3J\p_3^4Q~\left(\p_3^3A^{3i}\,\p_3 v_i+3\p_3^2A^{3i}\,\p_3^2 v_i+3\p_3 A^{3i}\,\p_3^3 v_i\right)\\
=:&IB_{011}+IB_{012}+IB_{013}.
\end{aligned}
\end{equation}

In $IB_{012}$, since $A^{Li}$ has the form $\p_3\eta\times\TP\eta$, the highest order term contains $\p_3^3 A^{Li}=\p_3^4\eta\times\TP\eta+\cdots$ which cannot be directly controlled. However, this term can produce cancellation with $IB_{02}$. We have
\begin{equation}
\begin{aligned}
\p_3^3A^{Li}=&-\p_3^2(A^{Lp}\,\p_3\p_m\eta_p\,A^{mi})\\
=&-A^{Lp}\p_3^4\eta_p A^{3i}-\sum_{M=1}^2A^{Lp}\p_3^3\TP_M\eta_p A^{Mi}-[\p_3^2,A^{Lp}A^{mi}]\p_3\p_m\eta_p,
\end{aligned}
\end{equation}and thus $IB_{012}$ can be written as
\begin{align}
\label{IB0121} IB_{012}=&-\sum_{L=1}^2\ig \hat{A}^{3i} N_3\p_3^4Q\,\p_3^4\eta_p\,A^{Lp}\TP_L v_i\\
\label{IB0122} &-\sum_{L=1}^2 \ig N_3J\p_3^4Q\left(\sum_{M=1}^2A^{Lp}\p_3^3\TP_M\eta_p\,A^{Mi}+[\p_3^2,A^{Lp}A^{mi}]\p_3\p_m\eta_p\right).
\end{align}

On the other hand, we write $IB_{02}$ as
\begin{align}
\label{IB021} IB_{02}=&\ig \hat{A}^{3i}N_3\p_3^4 Q\,\p_3^4\eta_p\, A^{3p}\p_3 v_i\dyy\\
\label{IB022} &+\sum_{L=1}^2\ig \hat{A}^{3i}N_3\p_3^4 Q\,\p_3^4\eta_p\, A^{Lp}\TP_L v_i\dyy.
\end{align} Therefore, \eqref{IB022} exactly cancels with the main term \eqref{IB0121} in $IB_{012}$.

Now it remains to control $IB_{011},IB_{013}$ and \eqref{IB0122}, \eqref{IB021}. Invoking the relation
\begin{align}\label{qbdry}
\hat{A}^{3i}\p_3 Q=&-\sum_{L=1}^2\hat{A}^{Li}\TP_L Q-\rho_0\p_tv^i+\bp(J^{-1}\bp\eta^i),
\end{align} we get
\begin{equation}\label{n4qbdry1}
\begin{aligned}
\hat{A}^{3i}\p_3^4 Q=&\p_3^3(\hat{A}^{3i}\p_3 Q)-\p_3^3\hat{A}^{3i}\,\p_3Q-3\p_3^2\hat{A}^{3i}\,\p_3^2Q-3\p_3\hat{A}^{3i}\,\p_3^3Q\\
=&\p_3^3\left(-\rho_0\p_tv^i+\bp(J^{-1}\bp\eta)\right)-\sum_{L=1}^2\p_3^3(\hat{A}^{Li}\TP_L Q)\\
&-\p_3^3\hat{A}^{3i}\,\p_3Q-3\p_3^2\hat{A}^{3i}\,\p_3^2Q-3\p_3\hat{A}^{3i}\,\p_3^3Q.
\end{aligned}
\end{equation}Note that
\begin{itemize}
\setlength{\itemsep}{0pt}
\setlength{\parsep}{0pt}
\setlength{\parskip}{0pt}
\item The term $\hat{A}^{3i}$ is of the form $\TP\eta\times\TP\eta$, so the leading order term in $\p_3^3 A^{3i}$ should be $(\p_3^3\TP\eta)(\TP\eta)$.
\item The highest order term in $\p_3^3(\hat{A}^{Li}\TP_L Q)$ is $\p_3^3\hat{A}^{Li}\,\TP_L Q=0$ due to $\TP_LQ|_{\Gamma=0}$.
\item The highest order term in $\p_3^3(\bp(J^{-1}\bp\eta))$ is $(b_0\cdot\TP)\p_3^3(J^{-1}\bp\eta)$ because $b_0^3|_{\Gamma}=0$ makes $\bp$ tangential on the boundary.
\end{itemize}
Therefore, we can rewrite $\p_3^4 Q$ to be the terms of at most 3 normal derivatives and one tangential derivative:
\begin{equation}\label{n4qbdry2}
\begin{aligned}
\p_3^4 Q=&\underbrace{J^{-1}\hat{A}^{3i}\p_3\eta_i}_{=1} \p_3^4Q=J^{-1}\p_3\eta_i(\hat{A}^{3i}\p_3^4 Q)\\
=&J^{-1}\p_3\eta_i\bigg(\p_3^3\left(-\rho_0\p_tv^i+(b_0\cdot\TP)(J^{-1}\bp\eta)\right)-\sum_{L=1}^2\sum_{N=0}^2\binom{3}{N}(\p_3^{N}\hat{A}^{Li})(\p_3^{3-N}\TP_L Q)\\
&~~~~~~~~~~~~~~~~-\p_3^3\hat{A}^{3i}\,\p_3Q-3\p_3^2\hat{A}^{3i}\,\p_3^2Q-3\p_3\hat{A}^{3i}\,\p_3^3Q\bigg).
\end{aligned}
\end{equation}

In \eqref{IB021}, we need to rewrite $A^{3p}\p_3^4\eta_p$ by using $A^{3p}\p_3\eta_p=1$ in $\bar{\Omega}$ (and thus $\p_3^3(A^{3p}\p_3\eta_p)=0$)
\begin{equation}\label{n4etabdry}
\begin{aligned}
A^{3p}\p_3^4\eta_p=-\p_3^3A^{3p}\,\p_3\eta_p-3\p_3^2A^{3p}\,\p_3^2\eta_p-3\p_3A^{3p}\,\p_3^3\eta_p.
\end{aligned}
\end{equation}

In the light of \eqref{n4qbdry1}-\eqref{n4etabdry}, we are able to write  $IB_{011},IB_{013}$ and \eqref{IB0122}, \eqref{IB021} in the form of
\begin{equation}\label{n3bdry1}
\ig N_3(\p_3^3\dd f)(\p_3^3\dd g)h\dyy+\text{lower order terms},
\end{equation}where $\dd=\TP$ or $\p_t$ or $b_0\cdot\TP$, and $f,g$ can be $\eta,v,q,J^{-1}\bp\eta$, and $h$ contains at most first order derivative of $\eta,v$. Then \eqref{n3bdry1} can be controlled in the following way
\begin{equation}\label{n3bdry2}
\begin{aligned}
\ig N_3(\p_3^3\dd f)(\p_3^3\dd g)h\dyy=&\left(\io (\p_3^4\dd f)(\p_3^3 \dd g)h-\io(\p_3^3\dd f)(\p_3^4 \dd g) h-\io(\p_3^3\dd f)(\p_3^3\dd g)(\p_3 h)\right)\\
\overset{\dd}{=}&-\io(\p_3^4 f)(\p_3^3 \dd^2 g)h-\io(\p_3^4 f)(\p_3^3 \dd g)(\dd h)\\
&+\io(\p_3^3\dd^2 f)(\p_3^4 g) h+\io(\p_3^3\dd f)(\p_3^4 g)(\dd h)-\io(\p_3^3\dd f)(\p_3^3\dd g)(\p_3 h)\\
\lesssim&(\|\p_3^4 f\|_{0}+\|\p_3^3\dd^2 f\|_0)(\|\p_3^4 g\|_{0}+\|\p_3^3\dd^2 g\|_0)\|\p h\|_{L^{\infty}}\lesssim \|f\|_{8,*}\|g\|_{8,*}\|h\|_{3},
\end{aligned}
\end{equation}which gives the control of  $IB_{011},IB_{013}$ and \eqref{IB0122}, \eqref{IB021}.
\begin{rmk}
If we integrate $\dd=\p_t$ by parts in \eqref{n3bdry2} (such term appears in a leading order term $\p_3^3\p_t v$ in $\p_3^4Q$), then we should proceed the estimate under time integral and also consider the terms like $\io(\p_3^4 f)(\p_3^3\dd g)h$ which can be controlled by
\begin{equation}\label{n3bdry22}
\begin{aligned}
&\io(\p_3^4 v)(\p_3^3\dd g)h\lesssim\eps\|\p_3^4 v\|_0^2+\frac{1}{8\eps}\|\p_3^3\dd g\|_0^4+\frac{1}{8\eps}\|h\|_{L^{\infty}}^4\\
\lesssim& \eps\|\p_3^4 v\|_0^2+\frac{1}{8\eps}\left(\|g(0)\|_{7,*}^4+\|h(0)\|_{2}^4+\int_0^T\|\p_3^3\dd\p_t g(t)\|_0^4+\|\p_t h(t)\|_2^4\right)\\
\lesssim& \eps\|\p_3^4 v\|_0^2+\PP_0+\int_0^T P(\|g\|_{8,*},\|h\|_{5,*})\dt.
\end{aligned}
\end{equation}
\end{rmk}
According to \eqref{n3bdry2}-\eqref{n3bdry22}, we can finalize the estimates of the boundary integral $IB$ as follows
\begin{equation}\label{IB}
IB\lesssim\eps\|\p_3^4 v\|_0^2-\frac{c_0}{4}\ddt\ig\left|A^{3i}\p_3^4\eta_i\right|^2\dyy+P(\EE(t)).
\end{equation}

\subsection{Energy estimates of purely normal derivatives}

Now, \eqref{IB} together with \eqref{n4energy0}, \eqref{n43}, \eqref{n41}, \eqref{I} gives the estimates of Alinhac good unknowns of $v,Q$ in the case of purely non-weighted normal derivatives
\begin{equation}\label{n4energy}
\|\VV\|_0^2+\left\|\p_3^4\left(J^{-1}\bp\eta\right)\right\|_0^2+\|\p_3^4 q\|_0^2+\frac{c_0}{4}\left|A^{3i}\p_3^4\eta_i\right|_0^2\bigg|_{t=T}\lesssim\eps\|\p_3^4 v\|_0^2+ \PP_0+ P(\EE(T))\int_0^T P(\EE(t))\dt.
\end{equation} Finally, by the definition of Alinhac good unknown \eqref{n4good} and $\p_3^4\eta|_{t=0}=\mathbf{0}$, $\p_3^4 v$ is controlled by
\begin{equation}
\|\p_3^4 v\|_0^2\lesssim\|\VV\|_0^2+\|a\p v\|_{L^{\infty}}^2\int_0^T\|\p_3^4 v\|_0^2\dt\lesssim\|\VV\|_0+P(\EE(T))\int_0^T P(\EE(t))\dt,
\end{equation}and thus by choosing $\eps>0$ sufficiently small, we get
\begin{equation}\label{n4}
\|\p_3^4 v\|_0^2+\left\|\p_3^4\left(J^{-1}\bp\eta\right)\right\|_0^2+\|\p_3^4 q\|_0^2+\frac{c_0}{4}\left|A^{3i}\p_3^4\eta_i\right|_0^2\bigg|_{t=T}\lesssim \PP_0+ P(\EE(T))\int_0^T P(\EE(t))\dt.
\end{equation}

\section{Control of purely tangential derivatives}\label{sect t8}

Now we consider the purely tangential derivatives. In this case, the top order derivative becomes $\p_*^I=\p_t^{i_0}\TP_1^{i_1}\TP_2^{i_2}$ with $i_0+i_1+i_2=8$. We will prove the following estimates by a modified Alinhac good unknown method.

\begin{prop}\label{prop t8}
The following energy inequality holds for any sufficiently small $\eps>0$
\begin{equation}\label{t8t8}
\sum_{i_3=i_4=0}\|\p_*^I v\|_0^2+\left\|\p_*^I\left(J^{-1}\bp\eta\right)\right\|_0^2+\|\p_*^I q\|_0^2+\frac{c_0}{4}\left|A^{3i}\p_*^I\eta_i\right|_0^2\bigg|_{t=T}\lesssim \varepsilon\|\p_3\p_t^6 v\|_0^2+\PP_0+ P(\EE(T))\int_0^T P(\EE(t))\dt.
\end{equation}
\end{prop}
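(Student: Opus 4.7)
The plan is to adapt the modified Alinhac good unknown framework of Section~\ref{sect n4} to purely tangential multi-indices $\p_*^I = \p_t^{i_0}\TP_1^{i_1}\TP_2^{i_2}$ with $i_0+i_1+i_2=8$. I would first construct $\VV,\QQ$ as in \eqref{goodqq}--\eqref{goodvv}: besides the principal pieces $\p_*^I v - \p_*^I\eta\cdot\pa v$ and $\p_*^I Q - \p_*^I\eta\cdot\pa Q$, one absorbs into $\Delta_v$ those commutators $(\p_*^{I'}A^{li})(\p_*^{I-I'}\p_l v_i)$ with $l=3,\langle I'\rangle=1$ via the rewriting of Section~\ref{stat4} (turning them into a covariant derivative of a lower-order quantity), while the analogous terms for $Q$ are handled by substituting $\pA Q = -\rho_0\p_t v + \bp(J^{-1}\bp\eta)$. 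This substitution converts a normal derivative of $Q$ into tangential or time derivatives, which is exactly where the anisotropic space $H_*^m$ pays off, and yields $\|\Delta_f\|_{1,*}\lesssim P(\EE(t))$.

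With the good-unknown equation
\[
\rho_0\p_t \VV - \bp\,\p_*^I(J^{-1}\bp\eta) + \nabla_{\hat A}\QQ = J\FF, \qquad \|J\FF\|_0 \lesssim P(\EE(t)),
\]
in hand, I take the $L^2(\Omega)$ inner product with $\VV$ and repeat the interior analysis from Section~\ref{sect n4} verbatim. The magnetic piece yields $-\tfrac12\tfrac{d}{dt}\io J|\p_*^I(J^{-1}\bp\eta)|^2$ modulo the residue $K_{11}$, and expanding $Q = q + \tfrac12|J^{-1}\bp\eta|^2$ in $I = \io J\QQ(\pa\cdot\VV)$ together with $\tfrac{JR'(q)}{\rho_0}\p_t q = -\diva v$ produces $-\tfrac12\tfrac{d}{dt}\io\tfrac{J^2R'(q)}{\rho_0}|\p_*^I q|^2$ plus a term that exactly cancels $K_{11}$. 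All remaining interior contributions, including the analogue of \eqref{I3} and the terms arising from $\Delta_v,\Delta_Q$, are controlled under a $\int_0^T$ integral by moving one $\p_t$ as in \eqref{I21}--\eqref{I3}.

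The decisive simplification occurs on the boundary. Since $Q|_\Gamma=0$ and $\p_*^I$ is tangential to $\Gamma$, we have $\p_*^I Q|_\Gamma = 0$, so the analogue of the dangerous $IB_0$ term from Section~\ref{sect n4} vanishes and only an $IB_1$-type contribution survives. Because $\TP_L Q|_\Gamma=0$ for $L=1,2$, the only surviving component of $\pa Q$ is $\p Q/\p N$; moreover, the commutation $[\p_*^I,\p_t]=0$ on purely tangential multi-indices gives the identity $A^{3i}(\p_*^I v_i - \p_*^I\eta_r A^{lr}\p_l v_i) = \p_t(A^{3i}\p_*^I\eta_i)$ used in \eqref{IBIB1}. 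The Rayleigh--Taylor sign assumption \eqref{sign2} then delivers $\tfrac{c_0}{4}|A^{3i}\p_*^I\eta_i|_0^2$, while the $IB_2$-type remainder cancels against the term produced by $\p_t A^{3i}$, exactly as in \eqref{IB13}.

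The hard part will be the residual boundary and commutator contributions from $\Delta_Q,\Delta_v$ and from $[\p_*^I,\bp]b$. Most of these can be closed by the trace lemma~\ref{trace} together with an integration-by-parts scheme analogous to \eqref{n3bdry2}--\eqref{n3bdry22}, which systematically trades one normal derivative for two tangential ones, consistent with the weighting $\langle I\rangle = i_0+i_1+i_2+2i_3+i_4$. The only contribution that refuses to be absorbed into $\PP_0 + \int_0^T P(\EE(t))\dt$ arises when $\p_*^I = \p_t^7\TP$ and one is forced, after using $\pA Q = -\rho_0\p_t v+\bp(J^{-1}\bp\eta)$ and then integrating by parts under the time integral, to bound a term by $\varepsilon\|\p_3\p_t^6 v\|_0^2$; this lone term is left on the right-hand side, to be absorbed in Section~\ref{mixed} where estimates for mixed normal--tangential derivatives are combined and $\varepsilon$ is taken small enough to close.
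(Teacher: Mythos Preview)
Your overall framework matches the paper's: modified Alinhac good unknowns with correction terms $\Delta_v,\Delta_Q$, the interior cancellation between the magnetic residue $K_{11}$ and the $\tfrac12|J^{-1}\bp\eta|^2$ contribution in $I$, and the boundary energy from Rayleigh--Taylor together with the vanishing of the $IB_0$-type term since $\p_*^I Q|_\Gamma=0$ for tangential $\p_*^I$.

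However, you have misidentified the origin of the $\varepsilon\|\p_3\p_t^6 v\|_0^2$ term. It does \emph{not} come from $\p_*^I=\p_t^7\TP$; that case still carries one spatial tangential derivative, so Lemma~\ref{trace} applies and all boundary contributions from $\Delta_v,\Delta_Q$ close without any loss (this is the paper's Section~\ref{ts6}). The problematic case is the \emph{full time derivative} $\p_*^I=\p_t^8$. There $(\Delta_v^*)_i=-8\p_t^7 v\cdot\pa v_i$, and the boundary term
\[
IB_6^*=\ig \hat{A}^{3i}N_3\,(\p_t^8\eta_p\,A^{3p}\p_3 Q)\,(\Delta_v^*)_i\,dy'
\]
involves $|\p_t^7 v|_0$, for which the anisotropic trace lemma gives no control (it requires $m\geq 1$, i.e.\ at least one spatial tangential derivative). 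The paper's remedy is to rewrite this boundary integral as an interior one via the divergence theorem in $y_3$, which produces factors $\p_3\p_t^7 v$; one then integrates one $\p_t$ by parts under the time integral, arriving at $\|\p_3\p_t^6 v\|_0\,\|\p_t^7 v\|_0\,P(\cdots)$, and Young's inequality yields the $\varepsilon\|\p_3\p_t^6 v\|_0^2$ on the right. The MHD substitution $\pA Q=-\rho_0\p_t v+\bp(J^{-1}\bp\eta)$ plays no role at this step. Your plan to ``close by the trace lemma'' would therefore fail precisely at $\p_t^8$, which is the case you did not single out.
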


For simplicity, we mainly study the case $i_0=0$, i.e., $\p_*^I=\TP_1^{i_1}\TP_2^{i_2}$ with $i_1+i_2=8$. For sake of clean notations, we denote $\TP^8=\TP_1^{i_1}\TP_2^{i_2}$. In fact, most of the steps of the proof in this section are completely applicable to the case of $i_0>0$.

\subsection{The case of full spatial derivatives}\label{ts8}

\subsubsection{Derivation of ``modified Alinhac good unknowns" in anisotropic Sobolev space}\label{ts8AGU}

We still use Alinhac good unknowns to control the tangential derivatives. However, we cannot directly replace $\p_3^4$ by $\TP^8$ in \eqref{n4good} because the commutator contains the terms like $\TP^7\p\eta$, $\TP^7\p v$ and $\TP^7\p Q$ whose $L^2$-norm cannot be controlled in $H_*^8$. In specific, we have
\begin{equation}\label{t8proof1}
\begin{aligned}
\TP^8(\pa^{i}f)=&\pa^i(\TP^8 f)+(\TP^8A^{li})\p_l f+[\TP^8,A^{li},\p_l f] \\
=&\pa^{i}(\TP^8 f)-\TP^7(A^{lr}\,\TP\p_{m}\eta_{r}\,A^{mi})\p_l f+[\TP^8,A^{li},\p_{l} f] \\
=&\pa^{i}(\TP^8 f-\TP^8\eta_{r}\,A^{lr}\,\p_l f)+\TP^8\eta_{r}\,\pa^{i}(\pa^{r} f)-([\TP^7,A^{lr}A^{mi}]\TP\p_{m}\eta_{r})\p_l f+[\TP^8,A^{li},\p_{l} f] .
\end{aligned}
\end{equation}
We notice that the $L^2(\Omega)$-norm of the following quantities coming from the last two terms of \eqref{t8proof1} cannot be controlled because $\TP^7$ may fall on $A=\p\eta\times\p\eta$ and $\p f$.
\begin{equation}\label{t8bad}
\begin{aligned}
e_1:=-\TP^7(A^{lr}A^{mi})\,\TP\p_m\eta_r\,\p_l f,&~~~e_2:=-7\TP(A^{lr}A^{mi})\,\TP^7\p_m\eta_r\,\p_l f\\
e_3:=8(\TP^7A^{li})(\TP\p_l f),&~~~e_4:=8(\TP A^{li})(\TP^7\p_l f).
\end{aligned}
\end{equation}
Here $8\TP^7$ means there are 8 terms of the form $\TP_1^{i_1}\TP_2^{i_2}$ with $i_1+i_2=7$. We will repeatedly use similar notations throughout the manuscript.

Our idea to overcome this difficulty is mainly based on the following three techniques:
\begin{enumerate}
\setlength{\itemsep}{0pt}
\setlength{\parsep}{0pt}
\setlength{\parskip}{0pt}
\item Modify the definition of ``Alinhac good unknowns": Rewrite these quantities in terms of $\nabla_A^i(\cdots)+~L^2$-bounded terms, and then merge the terms inside the covariant derivative $\pa^i$ into the ``Alinhac good unknowns".

\item Produce a weighted normal derivative to replace a non-weighted one: There are terms like $(\TP^7\p_3\eta)(\TP Q)$. Since $Q|_{\Gamma}=0$, we know $\TP Q|_{\Gamma}=0$. Therefore, we can estimate the $L^{\infty}$-norm of $\TP Q$ by fundamental theorem of calculus: (Suppose $y_3>0$ without loss of generality)
\[
|\TP Q(t,y_3)|_{L^{\infty}(\T^2)}=\left|0+\int_1^{y_3}\TP\p_3 Q(t,\zeta_3)d\zeta_3\right|_{L^{\infty}(\T^2)}\leq (1-y_3)\|\TP\p_3 Q\|_{L^{\infty}}\leq \sigma(y_3)\|\TP\p_3 Q\|_{L^{\infty}},
\] then we move the $\sigma(y_3)$ to $\TP^7\p_3\eta$ to get a weighted normal derivative $(\sigma\p_3)^1\TP^7 \eta$ whose $L^2$-norm can be directly bounded in $H_*^8$.
\item Replace $\pA Q$ (contains a normal derivative) by $-\rho_0\p_t v+\bp(J^{-1}\bp\eta)$ (only contains tangential derivative) in order to make the order of the derivatives lower thanks to the anisotropy of $H_*^m$.
\end{enumerate}

Now we analyze these extra terms from the commutator. We start with $8(\TP^7A^{li})(\TP\p_l f)$ and $8(\TP A^{li})(\TP^7\p_l f)$ coming from $[\TP^8,A^{li},\p_{l} f]$ in \eqref{t8proof1}. Since $\TP A^{li}=-A^{lp}\,\TP\p_m\eta_p\,A^{mi}$, we have
\[
\TP^7A^{li}=-A^{lp}\,\TP^7\p_m\eta_p\,A^{mi}-[\TP^6,A^{lp}A^{mi}]\p_m\eta_p,
\]where the highest order term in $[\TP^6,A^{lp}A^{mi}]\p_m\eta_p$ is $\TP^6\p_m\eta_p$ whose $L^2$-norm can be directly bounded by $\|\eta\|_{8,*}$. Therefore, we have
\begin{equation}\label{t8C1}
\begin{aligned}
8(\TP^7A^{li})(\TP\p_l f)=&-8(A^{mi}\,\p_m\TP^7\eta_p\,A^{lp})\TP\p_l f-8([\TP^6,A^{lp}A^{mi}]\p_m\eta_p)\TP\p_l f\\
=&-8\nabla_A^i(\TP^7\eta_p\,A^{lp}\TP\p_l f)\underbrace{+8\nabla_A^i(\nabla_A^p \TP f)\TP^7\eta_p-8([\TP^6,A^{lp}A^{mi}]\p_m\eta_p)\TP\p_l f}_{=:C_1(f)}\\
=:&-8\nabla_A^i(\TP^7\eta\cdot\nabla_A\TP f)+C_1(f),
\end{aligned}
\end{equation}where $C_1(f)$ can be controlled by using $H^{1/2}\hookrightarrow L^3$ and $H^{1}\hookrightarrow L^6$ in 3D domain
\begin{align*}
C_1(f)\lesssim&\|A\|_{L^{\infty}}^2\|\p^2\TP f\|_{L^6}\|\TP^7\eta\|_{L^3}+\|A\,\TP f\,\p A\|_{L^{\infty}}\|\TP^7\eta\|_{L^2}+P(\|\eta\|_{8,*})\|\TP\p f\|_{L^{\infty}}\\
\lesssim& \|A\|_{L^{\infty}}^2\|\p^2 \TP f\|_{1}\|\lee\TP\ree^{1/2}\TP^7\eta\|_{0}+P(\|\eta\|_{8,*})\|\TP \p f\|_{L^{\infty}}\\
\lesssim&\|A\|_{L^{\infty}}^2\|\p^2 \TP f\|_{1}\|\TP^7\eta\|_{0}^{1/2}\|\TP^8\eta\|_{0}^{1/2}+P(\|\eta\|_{8,*})\|\TP\p f\|_{L^{\infty}}\\
\lesssim&P(\|\eta\|_3)\|f\|_{7,*}\|\eta\|_{8,*}+P(\|\eta\|_{8,*})\|\TP\p f\|_{L^{\infty}}.
\end{align*}

The term $8(\TP A^{li})(\TP^7\p_l f)$ should be treated differently in the case of $f=v_i$ and $f=Q$ respectively.
\begin{itemize}
\item When $f=v_i$, then this term becomes
\begin{equation}\label{t8C2v}
\begin{aligned}
8(\TP A^{li})(\TP^7\p_l v_i)=&-8A^{lp}\,\TP\p_m\eta_p\,A^{mi}\TP^7\p_lv_i=-8A^{li}\,\TP\p_m\eta_i\,A^{mp}\,\TP^7\p_lv_p\\
=&-8\nabla_A^i(\TP^7 v_p\,A^{mp}\,\TP\p_m\eta_i)+\underbrace{8\nabla_A^i(\TP\p_m\eta_i\,A^{mp})\TP^7v_p}_{=:C_2(v)}\\
=:&-8\nabla_A^i(\TP^7 v\cdot\pa\TP\eta_i)+C_2(v),
\end{aligned}
\end{equation}and similarly we have $\|C_2(v)\|_0\lesssim P(\|\eta\|_{7,*})\|v\|_{8,*}$.

\item When $f=Q$, we cannot mimic the simplification as above. Instead, we need to invoke the MHD equation to replace $\pa Q$ by tangential derivatives. We consider
\begin{equation}\label{t8C2Q}
\begin{aligned}
8(J\TP A^{li})(\TP^7\p_l Q)=&-8(\hat{A}^{lp}\,\TP\p_m\eta_p\,A^{mi})\TP^7\p_lQ\\
=&-8\TP^7(\hat{A}^{lp}\p_l Q)\,A^{mi}\TP\p_m\eta_p+8(\TP^7 \hat{A}^{lp})(\p_l Q)(\TP\p_m\eta_p\,A^{mi})\\
&+8\sum_{N=1}^6 \binom{7}{N}(\TP^N \hat{A}^{lp})(\TP^{7-N}\p_l Q)(\TP\p_m\eta_p\,A^{mi})\\
=&8\TP^7\left(\rho_0\p_tv^p-\bp(J^{-1}\bp\eta^p)\right)A^{mi}\TP\p_m\eta_p+8(\TP^7 \hat{A}^{lp})(\p_l Q)(\TP\p_m\eta_p\,A^{mi})\\
&+8\sum_{N=1}^6 \binom{7}{N}(\TP^N \hat{A}^{lp})(\TP^{7-N}\p_l Q)(\TP\p_m\eta_p\,A^{mi})\\
=:&C_{21}+C_{22}+C_{23}.
\end{aligned}
\end{equation}

The $L^2$-norm of $C_{23}$ can be directly controlled since the top order derivative is $\TP^6\p$
\begin{equation}\label{t8C23}
\|C_{23}\|_0\lesssim \|\eta\|_{8,*}\|Q\|_{8,*} P(\|\eta\|_{7,*}).
\end{equation}

The $L^2$-norm of $C_{22}$ can be directly controlled when $l=3$ because $\hat{A}^{3p}$ consists of $\TP\eta\times\TP\eta$. When $l=1,2$, we need to invoke the second technique above, i.e., using $\TP Q|_{\Gamma}=0$ to produce a weight function $\sigma(y_3)$.
\begin{equation}\label{t8C22}
\begin{aligned}
\|C_{22}\|_0\lesssim&\|\TP^7\hat{A}^{3p}\|_0\|\p_3 Q\,\TP\p\eta\, a\|_{L^{\infty}}+\sum_{L=1}^2\|(\TP^7\hat{A}^{Lp})(\TP_L Q)(\TP\p_m\eta_p\,A^{mi})\|_{0}\\
\lesssim& P(\|\eta\|_{7,*})\|\TP^8\eta\|_{0}\|Q\|_3+\sum_{L=1}^2\|(\TP^7\hat{A}^{Lp})(\sigma(y_3)\p_3\TP_L Q)(\TP\p_m\eta_p\,A^{mi})\|_{0}\\
\lesssim& P(\|\eta\|_{7,*})\|\TP^8\eta\|_{0}\|Q\|_3+\sum_{L=1}^2\|\sigma\TP^7\hat{A}^{Lp}\|_{0}\|(\p_3\TP_L Q)(\TP\p_m\eta_p\,A^{mi})\|_{L^{\infty}}\\
\lesssim& P(\|\eta\|_{7,*})\|Q\|_{7,*}\left(\|\TP^8\eta\|_{0}+\|(\sigma\p_3)\TP^7 \eta\|_0\right),
\end{aligned}
\end{equation}where we use the fact that $\hat{A}^{Lp}$ consists of $(\p_3\eta)(\TP\eta)$ in the last step.

Finally, $C_{21}$ can also be directly bounded because the top order derivatives are $\TP^7\p_t$ and $\TP^7\bp$. Note that $b_0^3|_{\Gamma}=0$ yields the following estimates by using the second technique mentioned above.
\[
\|b_0^3\p_3\TP^7(J^{-1}\bp\eta)\|_{0}\lesssim\|\p b_0\|_2\|(\sigma\p_3)\TP^7(J^{-1}\bp\eta)\|_{0},
\]and thus
\begin{equation}\label{t8C22}
C_{21}\lesssim P(\|\eta\|_{7,*})(\|\rho_0\|_{7,*}\|v\|_{8,*}+\|b_0\|_{7,*}\|\bp\eta\|_{8,*}).
\end{equation}
Therefore, we have the estimates for $C_2(Q):=8\TP A^{li}\TP^7\p_l Q$
\begin{equation}\label{t8C2q}
\|C_2(Q)\|_{0}\lesssim P(\|b_0\|_{7,*},\|\rho_0\|_{7,*},\|\eta\|_{7,*})(\|\eta\|_{8,*}+\|v\|_{8,*}+\|J^{-1}\bp\eta\|_{8,*}+\|Q\|_{8,*}).
\end{equation}
\end{itemize}

Next we analyze $-(\TP^7(A^{lr}A^{mi})~\TP\p_m\eta_r)\p_l f$ coming from $-([\TP^7, A^{lr}A^{mi}]\TP\p_m\eta_r)\p_lf$. There are two terms of top order derivatives:
\[
-\TP^7(A^{lr}A^{mi})\,\TP\p_m\eta_r\,\p_l f=-(\TP^7A^{lr})A^{mi}\,\TP\p_m\eta_r\,\p_l f-A^{lr}(\TP^7A^{mi})\,\TP\p_m\eta_r\,\p_l f-\sum_{N=1}^6\binom{7}{N}(\TP^{N}A^{lr})(\TP^{6-N}A^{mi})\TP\p_m\eta_r\,\p_l f,
\]where the $L^2$-norm of the last term can be directly controlled
\[
\left\|\sum_{N=1}^6\binom{7}{N}(\TP^{N}A^{lr})(\TP^{6-N}A^{mi})\TP\p_m\eta_r\,\p_l f\right\|_0\lesssim P(\|\eta\|_{8,*})\|f\|_3.
\]

Similarly as \eqref{t8C1}, the term $-A^{lr}(\TP^7A^{mi})\TP\p_m\eta_r\,\p_l f$ can be written as the covariant derivatives plus $L^2$-bounded terms
\begin{equation}\label{t8C3}
\begin{aligned}
-A^{lr}(\TP^7A^{mi})\TP\p_m\eta_r\,\p_l f=&A^{lr}A^{mp}(\p_k\TP^7\eta_p)A^{ki}\,\TP\p_m\eta_r\,\p_lf+([\TP^6,A^{mp}A^{ki}]\TP\p_k\eta_p)A^{lr}\,\TP\p_m\eta_r\,\p_lf\\
=&\nabla_A^i(\TP^7\eta_p\,A^{mp}\,\TP\p_m\eta_r\,A^{lr}\,\p_l f)\\
&\underbrace{-\TP^7\eta_p\,\nabla_A^i(A^{mp}\,\TP\p_m\eta_r\,A^{lr}\,\p_l f)+([\TP^6,A^{mp}A^{ki}]\TP\p_k\eta_p)A^{lr}\,\TP\p_m\eta_r\,\p_lf}_{=:C_3(f)}\\
=:&\nabla_A^i(\TP^7\eta\cdot\pa \TP\eta\cdot\pa f)+C_3(f),
\end{aligned}
\end{equation}where $C_3(f)$ can be directly controlled similarly as $C_1(f)$
\[
\|C_3(f)\|_0\lesssim P(\|\eta\|_{8,*})\|\p f\|_2.
\]

We then compute $-(\TP^7A^{lr})A^{mi}\,\TP\p_m\eta_r\,\p_l f$.

\begin{itemize}
\item When $f=v_i$: Similarly as in \eqref{t8C3}, we have
\begin{equation}\label{t8C4v}
\begin{aligned}
-(\TP^7A^{lr})A^{mi}\,\TP\p_m\eta_r\,\p_l v_i=&A^{lp}(\TP^7\p_k\eta_p)A^{kr}A^{mi}\,\TP\p_m\eta_r\,\p_l v_i-([\TP^6,A^{lp}A^{kr}]\TP\p_k\eta_p)A^{mi}\,\TP\p_m\eta_r\,\p_l v_i\\
=&A^{lp}(\TP^7\p_k\eta_p)A^{ki}A^{mr}\,\TP\p_m\eta_i\,\p_l v_r-([\TP^6,A^{lp}A^{kr}]\TP\p_k\eta_p)A^{mi}\,\TP\p_m\eta_r\,\p_l v_i\\
=&\nabla_A^i(\TP^7\eta_p\,A^{lp}\,\p_l v_r\,A^{mr}\,\TP\p_m\eta_i)\\
&\underbrace{-\nabla_A^i(A^{lp}A^{mr}\TP\p_m\eta_i\,\p_lv_r)\TP^7\eta_p-([\TP^6,A^{lp}A^{kr}]\TP\p_k\eta_p)A^{mi}\,\TP\p_m\eta_r\,\p_l v_i}_{=:C_4(v)}\\
=:&\nabla_A^i(\TP^7\eta\cdot\pa v\cdot\pa\TP\eta_i)+C_4(v),
\end{aligned}
\end{equation}where $C_4(v)$ can be directly controlled similarly as $C_1(f)$
\[
\|C_4(v)\|_0\lesssim P(\|\eta\|_{8,*})\|\p v\|_2.
\]

\item When $f=Q$: If $l=3$, then this term can be directly controlled since $A^{3r}=J^{-1}\TP\eta\times\TP\eta$ only contains first-order tangential derivatives. If $l=1,2$, then we can mimic the treatment of $C_{22}$, i.e., using $\TP_L Q|_{\Gamma}=0$ and fundamental theorem of calculus to produce a weight function $\sigma(y_3)$ and move that to $\TP^7 A^{lr}$. Define $C_4(Q):=-(\TP^7A^{lr})A^{mi}\,\TP\p_m\eta_r\,\p_lQ$, then
\begin{equation}\label{t8C4q}
\begin{aligned}
\|C_4(Q)\|_0\lesssim&\|(\TP^7A^{3r})A^{mi}\,\TP\p_m\eta_r\,\p_3Q\|_0+\sum_{L=1}^2\|(\TP^7A^{Lr})A^{mi}\,\TP\p_m\eta_r\,\TP_LQ\|_0\\
\lesssim&\|\TP^8\eta\|_0\|Q\|_3 P(\|\p\eta\|_2,\|\TP\p\eta\|_2)+\sum_{L=1}^2\|\sigma\TP^7A^{Lr}\|_0\|A^{mi}\,\TP\p_m\eta_r\,\TP_L\p_3Q\|_{L^{\infty}}\\
\lesssim&\left(\|\TP^8\eta\|_{0}+\|(\sigma\p_3)\TP^7 \eta\|_0\right)P(\|Q\|_3,\|\TP Q\|_3,\|\eta\|_{7,*}).
\end{aligned}
\end{equation}
\end{itemize}

Next we analyze $-7\TP(A^{lr}A^{mi})\TP^7\p_m\eta_r~\p_l f$ coming from $-[\TP^7, A^{lr}A^{mi}]\TP\p_m\eta_r~\p_lf$. This term cannot be directly controlled when $m=3$. We should analyze it term by term. First we have
\begin{align*}
-7\TP(A^{lr}A^{mi})\TP^7\p_m\eta_r~\p_l f=&-7\TP A^{lr}\,A^{mi}\,\TP^7\p_m\eta_r\,\p_l f-7A^{lr}\,\TP A^{mi}\,\TP^7\p_m\eta_r\,\p_l f\\
=&7A^{lp}\,\p_k\TP\eta_p\,A^{kr}A^{mi}\,\TP^7\p_m\eta_r\,\p_l f+7A^{lr}A^{mp}\,\p_k\TP\eta_p\,A^{ki}\,\TP^7\p_m\eta_r\,\p_l f.
\end{align*}

The first term can be directly rewritten as follows
\begin{equation}\label{t8C5}
\begin{aligned}
7A^{lp}~\p_k\TP\eta_p~A^{kr}A^{mi}~\TP^7\p_m\eta_r~\p_l f=&7\nabla_i(\TP^7\eta_r~A^{kr}~\p_k\TP\eta_p~A^{lp}~\p_l f)\underbrace{-7\nabla_A^i(A^{kr}~\p_k\TP\eta_p~A^{lp}~\p_l f)\TP^7\eta_r}_{C_5(f)}\\
=:&7\nabla_A^i(\TP^7\eta\cdot\pa\TP\eta\cdot\pa f)+C_5(f),
\end{aligned}
\end{equation}where $C_5(f)$ can be similarly controlled as $C_1(f)$
\[
\|C_5(f)\|_0\lesssim P(\|\eta\|_{8,*})\|\p f\|_{3}.
\]

Then we analyze $7A^{lr}A^{mp}(\p_k\TP\eta_p)A^{ki}(\TP^7\p_m\eta_r)\p_l f$, which needs different treatment for $f=v_i$ and $f=Q$ respectively.
\begin{itemize}
\item When $f=v_i$, we have the following simplification
\begin{equation}\label{t8C6v}
\begin{aligned}
&7A^{lr}A^{mp}\p_k\TP\eta_p\,A^{ki}\,\TP^7\p_m\eta_r\,\p_l v_i=7A^{lr}A^{mi}\,\p_k\TP\eta_i\,A^{kp}\,\TP^7\p_m\eta_r\,\p_l v_p\\
=&7\nabla_A^i(\TP^7\eta_r\,A^{lr}\,\p_lv_p\,A^{kp}\,\TP\p_k\eta_i)+\underbrace{7\nabla_A^i(A^{lr}\,\p_lv_p\,A^{kp}\,\TP\p_k\eta_i)\TP^7\eta_r}_{C_6(v)}\\
=:&7\nabla_A^i(\TP^7\eta\cdot\pa v\cdot\pa\TP\eta_i)+C_6(v),
\end{aligned}
\end{equation}and $\|C_6(v)\|_0\lesssim P(\|\eta\|_{7,*})\|\eta\|_{8,*}\|v\|_{3}$ follows from direct computation, analogous to the analysis of the first term in $C_1(f)$.

\item When $f=Q$, this term becomes
\begin{equation}
\begin{aligned}
C_6(Q):=&-7A^{lr}(\TP A^{mi})(\TP^7\p_m\eta_r)\p_l Q\\
=&-7\left(\underbrace{\TP^7(A^{lr}\p_m\eta_r)}_{=\TP^7\delta^l_m=0}-(\TP^7A^{lr})(\p_m\eta_r)-\sum_{N=1}^6\binom{7}{N} (\TP^NA^{lr})(\TP^{7-N}\p_m\eta_r)\right)\TP A^{mi}\,\p_l Q\\
=&7(\TP^7A^{3r})\p_m\eta_r\,\TP A^{mi}\,\p_3 Q+\sum_{L=1}^2(\TP^7A^{Lr})\p_m\eta_r\,\TP A^{mi}\,\TP_L Q\\
&+7\sum_{N=1}^6\binom{7}{N} (\TP^NA^{lr})(\TP^{7-N}\p_m\eta_r)\TP A^{mi}\,\p_l Q\\
=:&C_{61}+C_{62}+C_{63}.
\end{aligned}
\end{equation}

Since $A^{3r}=J^{-1}\TP\eta\times\TP\eta$, we know the top order term is of the form $\TP^8\eta\times\TP\eta$ and thus $C_{61}$ can be directly controlled
\[
\|C_{61}\|_0\lesssim P(\|\eta\|_{3})\|\eta\|_{8,*}\|\p_3 Q\|_{2}.
\]

The term $C_{62}$ can be treated in the same way as $C_4(Q)$ in \eqref{t8C4q} by using $\TP_LQ|_{\Gamma}=0$ to produce a weight function $\sigma$
\[
\|C_{62}\|_0\lesssim (\|(\sigma\p_3)\TP^7\eta\|+\|\TP^8\eta\|_0)P(\|\eta\|_{7,*})\|\TP\p_3 Q\|_2\lesssim P(\|\eta\|_{7,*})\| Q\|_{7,*}\|\eta\|_{8,*}.
\]

Finally, $C_{63}$ can be directly controlled
\[
\|C_{63}\|_0\lesssim  P(\|\eta\|_{7,*})\|\eta\|_{8,*}\|\p Q\|_{2},
\] and thus
\begin{equation}\label{t8C6q}
\|C_6(Q)\|_{0}\lesssim P(\|\eta\|_{7,*})\|\eta\|_{8,*}\| Q\|_{7,*}.
\end{equation}
\end{itemize}

Now we plug \eqref{t8C1}-\eqref{t8C2v}, \eqref{t8C2q}-\eqref{t8C6q} into \eqref{t8proof1} and define the ``modified Alinhac good unknowns" of $v$ and $Q$ with respect to $\TP^8$ as
\begin{equation}\label{t8goodv}
\begin{aligned}
\VV_i^*:=\TP^8v_i&-\TP^8\eta\cdot\pa v_i\\
&-8\TP^7\eta\cdot\pa\TP v_i-8\TP^7v\cdot\pa\TP\eta_i\\
&+\TP^7\eta\cdot\pa\TP\eta\cdot\pa v_i+\TP^7\eta\cdot\pa v\cdot\pa \TP\eta_i\\
&+7\TP^7\eta\cdot\pa\TP\eta\cdot\pa v_i+7\TP^7\eta\cdot\pa v\cdot\pa\TP\eta_i\\
=\TP^8v_i&-\TP^8\eta\cdot\pa v_i-8\TP^7\eta\cdot\pa\TP v_i-8\TP^7v\cdot\pa\TP\eta_i+8\TP^7\eta\cdot\pa\TP\eta\cdot\pa v_i+8\TP^7\eta\cdot\pa v\cdot\pa \TP\eta_i,
\end{aligned}
\end{equation}and
\begin{equation}\label{t8goodq}
\begin{aligned}
\QQ^*:=\TP^8Q&-\TP^8\eta\cdot\pa Q-8\TP^7\eta\cdot\pa\TP Q+8\TP^7\eta\cdot\pa\TP\eta\cdot\pa Q.
\end{aligned}
\end{equation}Then the modified good unknowns satisfy the following relations
\begin{equation}\label{t8goodgrad}
\TP^8(\diva v)=\pa\cdot \VV^*+\sum_{M=0}^6 C_M(v),~~~\TP^8(\pa Q)=\pa\QQ^*+\sum_{M=0}^6 C_M(Q),
\end{equation}where $C_0(f)$ comes from the directly controllable terms in the RHS of \eqref{t8proof1}
\begin{equation}\label{t8C0}
C_0(f):=\TP^8\eta_r~\pa^i(\pa^r f)-\sum_{N=2}^{6}\binom{7}{N}\TP^{N}(A^{lr}A^{mi})\TP^{7-N}(\TP\p_{m}\eta_{r})\p_l f+\sum_{N=2}^{6}\binom{8}{N}(\TP^NA^{li})(\TP^{8-N}\p_{l} f),
\end{equation}satisfies
\[
\|C_0(f)\|_0\lesssim P(\|\eta\|_{8,*})\|f\|_{8,*},
\] and $C_1\sim C_6$ are constructed in \eqref{t8C1}-\eqref{t8C2v}, \eqref{t8C2q}-\eqref{t8C6q}.

\subsection{Energy estimates of purely tangential derivatives}\label{ts8energy}

We denote $C^*(f):=C_0(f)+C_1(f)+\cdots+C_6(f)$ and the ``extra modification terms" in the modified Alinhac good unknowns by
\begin{align*}
(\Delta_v^*)_i:=&-8\TP^7\eta\cdot\pa\TP v_i-8\TP^7v\cdot\pa\TP\eta_i+8\TP^7\eta\cdot\pa\TP\eta\cdot\pa v_i+8\TP^7\eta\cdot\pa v\cdot\pa \TP\eta_i,\\
\Delta_Q^*:=&-8\TP^7\eta\cdot\pa\TP Q+8\TP^7\eta\cdot\pa\TP\eta\cdot\pa Q.
\end{align*} Then the modified Alinhac good unknowns become
\[
\VV^*=\TP^8 v-\TP^8\eta\cdot\pa v+\Delta_v^*,~~\QQ^*=\TP^8 Q-\TP^8\eta\cdot\pa Q+\Delta_Q^*.
\]
\begin{rmk}
There are more modification terms in $\VV^*$ than in $\QQ^*$. The reason is that we can replace $\pa Q$ which contains a normal derivative with tangential derivative ($\p_t v$ and $\bp(J^{-1}\bp\eta)$) by invoking the MHD equation. However, similar relation only holds for $\diva v$ instead of $\pa v$. Therefore, for those terms in the commutators containing $v$, we have to rewrite them to be the covariant derivatives of the modifition terms plus $L^2(\Omega)$-bounded terms.
\end{rmk}
It is straightforward to see that the $L^2(\Omega)$ norms of $\Delta_v^*,\Delta_Q^*,\p_t(\Delta_v^*)$ and $\p_t(\Delta_Q)$ can be controlled by $P(\EE(t))$
\begin{equation}\label{t8errorvt}
\begin{aligned}
\|\p_t(\Delta_v^*)\|_0\lesssim&\|\TP^7 v\|_0(\|\pa\TP v\|_{2}+\|\pa\TP\eta\|_2\|\pa v\|_2)+\|\TP^7\p_t v\|_0\|\pa\TP\eta\|_2\\
&+\|\TP^7\eta\|_0(\|\pa\TP\p_t v\|_2+\|\pa\TP\eta\|_2\|\pa\p_tv\|_2+\|\pa \TP v\|_2\|\pa v\|_2)\\
\lesssim&P(\|\eta\|_{8,*},\|v\|_{8,*}),
\end{aligned}
\end{equation}

\begin{equation}\label{t8errorqt}
\begin{aligned}
\|\p_t(\Delta_Q^*)\|_0\lesssim&\|\TP^7 v\|_0(\|\pa\TP Q\|_{2}+\|\pa\TP\eta\|_2\|\pa Q\|_2)\\
&+\|\TP^7\eta\|_0(\|\pa\TP\p_t Q\|_2+\|\pa\TP\eta\|_2\|\pa\p_tQ\|_2+\|\pa \TP v\|_2\|\pa Q\|_2)\\
\lesssim&P(\|\eta\|_{8,*},\|v\|_{7,*},\|Q\|_{8,*}),
\end{aligned}
\end{equation}

\begin{equation}\label{t8errorvq}
\|\Delta_Q^*\|_0+\|\Delta_v^*\|_0\lesssim P(\|\eta\|_{7,*},\|v\|_{7,*},\|Q\|_{7,*}).
\end{equation}

Now we take $\TP^8$ in the second equation of compressible MHD system \eqref{CMHDL} to get
\[
R\p_t(\TP^8 v)-J^{-1}\bp\TP^8\left(J^{-1}\bp\eta\right)+\TP^8(\pa Q)=\left[R,\TP^8 \right]\p_t v+\left[\TP^8,J^{-1}\bp\right]\left(J^{-1}\bp\eta\right).
\] Then invoking \eqref{t8goodgrad} to get
\[
R\p_t(\TP^8 v)-J^{-1}\bp\TP^8\left(J^{-1}\bp\eta\right)+\pa\QQ^*=\left[R,\TP^8 \right]\p_t v+\left[\TP^8,J^{-1}\bp\right]\left(J^{-1}\bp\eta\right)-C^*(Q).
\]Finally, plugging $\VV^*=\TP^8 v-\TP^8\eta\cdot\pa v+\Delta_v^*$ yields the evolution equation of $\VV^*$ and $\QQ^*$
\begin{equation}\label{t8goodeq}
\begin{aligned}
R\p_t\VV^* -J^{-1}\bp\TP^8\left(J^{-1}\bp\eta\right)+\pa\QQ^*=&\left[R,\TP^8 \right]\p_t v+\left[\TP^8,J^{-1}\bp\right]\left(J^{-1}\bp\eta\right)\\
&-C^*(Q)+R\p_t(-\TP^8\eta\cdot\pa v+\Delta_v^*)
\end{aligned}
\end{equation}

We denote the RHS of \eqref{t8goodeq} by $\FF^*$. Similarly as in Section \ref{sect n4}, we compute the $L^2$-inner product of \eqref{t8goodeq} and $J\VV^*$ to get the energy identity
\begin{equation}\label{t8energy0}
\frac{1}{2}\ddt\io\rho_0\left|\VV^*\right|^2\dy=\io\bp\TP^8(J^{-1}\bp\eta)\cdot\VV^*-\io(\nabla_{\hat{A}}\QQ^*)\cdot\VV^*+\io J\FF^*\cdot\VV^*.
\end{equation}

\subsubsection{Interior estimates}

Using \eqref{t8errorvt}, the third integral on RHS of \eqref{t8energy0} is controlled by direct computation
\begin{equation}\label{t83}
\io J\FF^*\cdot\VV^*\lesssim\|J\FF^*\|_0\|\VV^*\|_0\lesssim P\left(\left\|\left(\rho_0,\eta,v,Q,b_0,\bp\eta\right)\right\|_{8,*}\right)\|\VV^*\|_0.
\end{equation}

The first integral on RHS of \eqref{t8energy0} can be similarly treated as \eqref{n410}-\eqref{n41} by replacing $\p_3^4$ by $\TP^8$ and $\|\cdot\|_4$-norm by $\|\cdot\|_{8,*}$-norm. We omit the details and list the result
\begin{equation}\label{t81}
\io\bp\TP^8(J^{-1}\bp\eta)\cdot\VV^*\dy\lesssim-\frac12\ddt\io J\left|\TP^8(J^{-1}\bp\eta)\right|^2\dy+K_{11}^*+P\left(\|(\eta,v,b_0,\bp\eta)\|_{8,*}\right),
\end{equation}where $K_{11}^*$ is defined to be
\begin{equation}\label{t8K1}
K_{11}^*:=-\io  J\TP^8(J^{-1}\bp\eta)\cdot\left(J^{-1}\bp\eta\right)\TP^8(\diva v)\dy.
\end{equation}

Next we analyze the term $-\io J\pa \QQ\cdot\VV$. Integrating by parts and using Piola's identity, we get
\begin{equation}\label{t820}
\begin{aligned}
-\io(\nabla_{\hat{A}}\QQ^*)\cdot\VV^*=\io J\QQ(\nabla_A\cdot\VV^*)-\ig J\QQ A^{li}N_l\VV_i^*\dyy=:I^*+IB^*.
\end{aligned}
\end{equation}

Invoking \eqref{t8goodv}, \eqref{t8goodgrad} and $Q=q+\frac12|J^{-1}\bp\eta|^2$, we get
\begin{equation}\label{I*0}
\begin{aligned}
I^*=&\io J\TP^8 q\, \TP^8(\diva v)\dy+\io J\TP^8\left(\frac12\left|J^{-1}\bp\eta\right|^2\right)\TP^8(\diva v)\dy\\
&+\io (-\TP^8\eta_p\,\hat{A}^{lp}\,\p_l Q+\Delta_Q^*)\TP^8(\diva v)\dy-\io\TP^8 Q~C^*(v)\dy\\
=:&I_1^*+I_2^*+I_3^*+I_4^*,
\end{aligned}
\end{equation}where $I_4^*$ can be directly controlled by using the estimates of $C^*(v)$
\begin{equation}\label{I*4}
I_4^*\lesssim\|\TP^8 Q\|_0\|C^*(v)\|_0\lesssim P(\|\eta\|_{8,*})\|\TP^8Q\|_0\|v\|_{8,*}.
\end{equation}

Similarly, $I_2^*$ produces another higher order term to cancel with $K_{11}^*$
\begin{equation}\label{I*2}
\begin{aligned}
I_2=&\underbrace{\io J\TP^8\left(J^{-1}\bp\eta\right) \cdot\left(J^{-1}\bp\eta\right)\TP^8(\diva v)}_{\text{exactly cancel with }K_{11}^*}\dy\\
&+\sum_{N=1}^{7}\binom{8}{N}\io J\TP^N\left(J^{-1}\bp\eta\right) \cdot\TP^{8-N}\left(J^{-1}\bp\eta\right)\TP^8(\diva v)\dy\\
=&-K_{11}^*-\sum_{N=1}^{7}\binom{8}{N}\io\frac{J^2R'(q)}{\rho_0}\TP^N\left(J^{-1}\bp\eta\right) \cdot\TP^{8-N}\left(J^{-1}\bp\eta\right)\TP^8\p_t q\dy\\
&-\sum_{N=1}^{7}\binom{8}{N}\io J\TP^N\left(J^{-1}\bp\eta\right) \cdot\TP^{8-N}\left(J^{-1}\bp\eta\right)\left(\left[\TP^8,\frac{JR'(q)}{\rho_0}\right]\p_t q\right)\dy\\
=:&-K_{11}^*+I_{21}^*+I_{22}^*
\end{aligned}
\end{equation}

Similarly as in \eqref{I21}-\eqref{I22}, the term $I_{21}^*$ should be controlled by integrating $\p_t$ by parts under time integral and $I_{22}^*$ can be directly controlled. We omit the details
\begin{align}
\label{I*21} \int_0^T I_{21}^*\lesssim&\varepsilon\|\TP^8 q\|_0^2+\PP_0+\int_0^TP(\EE(t))\dt\\
\label{I*22} I_{22}^*\lesssim&\|J^{-1}\bp\eta\|_{7,*}^2\|q\|_{8,*}.
\end{align}

The term $I_1^*$ produces the energy term $\|\TP^8 q\|_0^2$ as in \eqref{I1}.
\begin{equation}\label{I*1}
I_1^*\lesssim-\frac12\ddt\io\frac{J^2R'(q)}{\rho_0}|\TP^8 q|^2+P(\|q\|_{8,*},\|\rho_0\|_{8,*},\|\eta\|_{8,*}).
\end{equation}

$I_3^*$ can be controlled by integrating $\p_t$ by parts under time integral after invoking $\diva v=-\frac{JR'(q)}{\rho_0}\p_t q$ and \eqref{t8errorqt}-\eqref{t8errorvq}.
\begin{equation}\label{I*3}
\begin{aligned}
\int_0^TI_3^*=&\int_0^T\io \frac{JR'(q)}{\rho_0}(\TP^8\eta_p\,\hat{A}^{lp}\,\p_l Q-\Delta_Q^*)\TP^8\p_t q\dy+\underbrace{\int_0^T\io (\TP^8\eta_p\,\hat{A}^{lp}\,\p_l Q-\Delta_Q^*)\left(\left[\TP^8, \frac{JR'(q)}{\rho_0}\right]\p_t q\right)}_{L_2^*}\dy\\
\overset{\p_t}{=}&-\int_0^T\io\p_t\left( \frac{JR'(q)}{\rho_0}\,\TP^8\eta_p\,\hat{A}^{lp}\,\p_l Q-\Delta_Q^*\right)\TP^8q\dy+\io\frac{JR'(q)}{\rho_0}(\TP^8\eta_p\,\hat{A}^{lp}\,\p_l Q-\Delta_Q^*)\TP^8q\dy\bigg|^T_0+L_2^*\\
\lesssim&\PP_0+\left(\left\|\frac{JR'(q)}{\rho_0}\,A\,\p Q\right\|_{L^{\infty}}\|\TP^8\eta\|_0+\|\Delta_Q^*\|_0\right)\|\TP^8 q\|_0+\int_0^TP\left(\|(\eta,v,q,\rho_0)\|_{8,*}\right)\dt.\\
\lesssim&\PP_0+\eps\|\TP^8 q\|_0^2+\left\|\frac{JR'(q)}{\rho_0}\,A\,\p Q\right\|_{L^{\infty}}^4+\|\TP^8\eta\|_0^4+\|\Delta_Q^*\|_0^2+\int_0^T P(\EE(t))\dt\\
\lesssim&\PP_0+\eps\|\TP^8 q\|_0^2+\int_0^T\left\|\p_t\left(\frac{JR'(q)}{\rho_0}\,A\,\p Q\right)\right\|_{L^{\infty}}^4+\|\TP^8 v(t)\|_0^4+\|\p_t(\Delta_Q^*)\|_0^2\dt+\int_0^T P(\EE(t))\dt\\
\lesssim&\eps\|\TP^8 q\|_0^2+\PP_0+\int_0^T P(\EE(t))\dt,
\end{aligned}
\end{equation}

Summarizing \eqref{I*0}-\eqref{I*3} and choosing $\eps>0$ to be sufficiently small, we get the estimates of $I^*$ under time integral
\begin{equation}\label{I*}
\int_0^TI^*\dt\lesssim-\frac{1}{2}\io\frac{J^2R'(q)}{\rho_0}\left|\TP^8 q\right|^2\dy\bigg|^T_0+\PP_0+\int_0^T P(\EE(t))\dt.
\end{equation}

\subsubsection{Boundary estimates}\label{sect t8bdry}

Now it remains to deal with the boundary integral $IB^*$. Since $Q|_{\Gamma}=0$, we know $$\QQ^*|_{\Gamma}=-\TP^8\eta_p\,A^{3p}\,\p_3 Q+\Delta_Q^*,$$ and $$\Delta_Q^*|_{\Gamma}=-8\TP^7\eta_p\,A^{3p}\,\TP\p_3 Q+8\TP^7\eta\cdot\pa\TP\eta_r\, A^{3r}\,\p_3Q.$$ Then the boundary integral $IB^*$ reads
\begin{equation}\label{IB*B}
\begin{aligned}
IB^*=&\ig \hat{A}^{3i}N_3\TP^8\eta_p\,A^{3p}\,\p_3 Q\,\TP^8 v_i\dyy-\ig \hat{A}^{3i}N_3(\TP^8\eta_pA^{3p}\p_3 Q)(\TP^8\eta\cdot\pa v_i)\dyy\\
&-\ig \hat{A}^{3i}N_3\Delta_Q^*\,\TP^8 v_i\dyy+\ig \hat{A}^{3i}N_3\Delta_Q^*\,\TP^8\eta\cdot\pa v_i\dyy\\
&-\ig  \hat{A}^{3i}N_3\Delta_Q^*(\Delta_v^*)_i\dyy+\ig \hat{A}^{3i}N_3(\TP^8\eta_p\,A^{3p}\,\p_3 Q)(\Delta_v^*)_i\dyy\\
=:&IB_1^*+IB_2^*+IB_3^*+IB_4^*+IB_5^*+IB_6^*.
\end{aligned}
\end{equation}

Before going to the proof, we would like to state our basic strategy to deal with the boundary control
\begin{itemize}
\setlength{\itemsep}{0pt}
\setlength{\parsep}{0pt}
\setlength{\parskip}{0pt}
\item $IB_1^*$ together with the Raylor-Taylor sign condition gives the boundary energy $|A^{3i}\TP^8\eta_i|_0^2$ and the extra terms can be cancelled by $IB_2^*$. This step also appears in the study of Euler equations \cite{CL2000priori,CS2010LWP,LL2018priori,Luo2018CWW,LuoZhang2020CWW} and incompressible MHD \cite{HaoLuo2014priori,GuWang2016LWP,Guaxi1,Guaxi2} and compressible resistive MHD \cite{Zhang2020CRMHD}. It actually gives the control of the second fundamental form of the free surface \cite{CL2000priori}.

\item $IB_3^*$: We can write $\TP^8 v_i=\TP^8\p_t\eta_i$ and integrate $\p_t$ by parts. When $\p_t$ falls on $\Delta_Q^*$, the boundary integral can be directly controlled by using trace lemma. When $\p_t$ falls on $\hat{A}^{3i}$, such terms exactly cancel with the top order term in $IB_4^*$.

\item $IB_5^*$ and $IB_6^*$: Direct computation together with the trace lemma gives the control.
\end{itemize}

We first compute $IB_1^*$. Similarly as \eqref{IB10}, we have
\begin{equation}\label{IB*10}
\begin{aligned}
IB_1^*=&-\ig \left(-\frac{\p Q}{\p N}\right)JA^{3i}\TP^8\eta_p\,A^{3p}\,\TP^8 \p_t\eta_i\dyy\\
=&-\frac{1}{2}\ddt\ig\left(-J\frac{\p Q}{\p N}\right)\left|A^{3i}\TP^8\eta_i\right|^2\dyy\\
&-\frac12\ig\p_t\left(J\frac{\p Q}{\p N}\right)\left|A^{3i}\TP^8\eta_i\right|^2\dyy+\ig\left(-J\frac{\p Q}{\p N}\right)\p_tA^{3i}\,\TP^8\eta_p\, A^{3p}\,\TP^8\eta_i\dyy\\
=:&IB_{11}^*+IB_{12}^*+IB_{13}^*,
\end{aligned}
\end{equation}

The term $IB_{11}^*$ together with the Rayleigh-Taylor sign condition gives the boundary energy
\begin{equation}
\int_0^TIB_{11}^*\leq-\frac{c_0}{4}\left|A^{3i}\TP^8\eta_i\right|_0^2\bigg|^T_0,
\end{equation} and $IB_{12}^*$ can be directly controlled by the boundary energy
\begin{equation}\label{IB*12}
IB_{12}^*\lesssim\left|A^{3i}\TP^8\eta_i\right|_0^2\left|\p_t\left(J\frac{\p Q}{\p N}\right)\right|_{L^{\infty}}\lesssim P(\EE(t)).
\end{equation}

Then we plug $\p_tA^{3i}=-A^{3r}\p_k v_rA^{ki}$ into $IB_{13}^*$ to get the cancellation structure
\begin{equation}\label{IB*13}
\begin{aligned}
IB_{13}^*=&\ig J\frac{\p Q}{\p N} A^{3r}\,\p_k v_rA^{ki}\,\TP^8\eta_p\,A^{3p}\TP^8\eta_i\\
=&\ig J\frac{\p Q}{\p N} A^{3i}\,\p_k v_i\,A^{kr}\,\TP^8\eta_p\, A^{3p}\,\TP^8\eta_r=-IB_2^*
\end{aligned}
\end{equation}

Next we analyze $IB_3^*$. We write $v_i=\p_t\eta_i$ and integrate this $\p_t$ by parts
\begin{equation}\label{IB*30}
\begin{aligned}
\int_0^TIB_3^*=&-\int_0^T\ig JA^{3i}N_3\Delta_Q^*~\TP^8\p_t\eta_i\dyy\dt\\
\overset{\p_t}{=}&\int_0^T\ig J~\p_tA^{3i}~N_3\Delta_Q^*~\TP^8\eta_i\dyy\dt\\
&-\int_0^T\ig A^{3i}N_3~\p_t(J\Delta_Q^*)~\TP^8\eta_i\dyy\dt-\ig JA^{3i}N_3~\Delta_Q^*~\TP^8\eta_i\dyy\bigg|^T_0\\
=:&IB_{31}^*+IB_{32}^*+IB_{33}^*.
\end{aligned}
\end{equation}

Again, plug $\p_tA^{3i}=-A^{3r}\p_k v_rA^{ki}$ into $IB_{31}^*$ to get the cancellation with $IB_4^*$
\begin{equation}\label{IB*31}
\begin{aligned}
IB_{31}^*=&-\int_0^T\ig JA^{3r}~\p_k v_r~A^{ki}~N_3~\Delta_Q^*~\TP^8\eta_i\dyy\dt\\
=&-\int_0^T\ig JA^{3i}~\p_k v_i~A^{kr}~N_3~\Delta_Q^*~\TP^8\eta_r\dyy\dt=-IB_4^*.
\end{aligned}
\end{equation}

For $IB_{33}^*$, we use the fact that $\TP^7\eta|_{t=0}=0$ (and thus $\Delta_Q^*|_{\Gamma}=0$ when $t=0$) together with Lemma \ref{trace} to get
\begin{equation}\label{IB*33}
\begin{aligned}
&\ig JA^{3i}N_3\Delta_Q^*~\TP^8\eta_i\dyy\bigg|_{t=T}=-\ig JA^{3i}N_3(8\TP^7\eta_p~A^{3p}~\TP\p_3 Q-8\TP^7\eta\cdot\pa\TP\eta_r ~A^{3r}~\p_3Q)\TP^8\eta_i\dyy\\
\lesssim&\left|A^{3i}\TP^8\eta_i\right|_0|J|_{L^{\infty}}(|A^{3p}\TP\p_3 Q|_{L^{\infty}}+|(\pa\TP\eta_r) A^{3r}\p_3Q|_{L^{\infty}})\int_0^T|\TP^7v(t)|_0\dt\\
\lesssim&\left|A^{3i}\TP^8\eta_i\right|_0P(\|\eta\|_{8,*},\|Q\|_{8,*})\int_0^T\|v(t)\|_{8,*}\dt.
\end{aligned}
\end{equation}

In $IB_{32}^*$, we invoke the relation \eqref{qbdry} to get
\begin{align*}
\p_t(J\Delta_Q^*)|_{\Gamma}=&-8\TP^7 v_p\,\hat{A}^{3p}\,\TP\p_3 Q+8\TP^7v \cdot\pa\TP\eta_r~ \hat{A}^{3r}\p_3Q\\
&-8\TP^7\eta_p~\p_t(\hat{A}^{3p}\TP\p_3 Q)+8\TP^7\eta\cdot\p_t(\pa\TP\eta_r~\hat{A}^{3r}\p_3Q)\\
=&-8\TP^7 v_p~\hat{A}^{3p}~\TP\p_3 Q+8\TP^7v \cdot\pa\TP\eta_r~ \hat{A}^{3r}\p_3Q\\
&-8\TP^7\eta_p\p_t\TP(\hat{A}^{3p}\p_3 Q)+8\TP^7\eta_p\p_t(\TP \hat{A}^{3p}~\p_3 Q)+8\TP^7\eta\cdot\p_t(\pa\TP\eta_r~\hat{A}^{3r}~\p_3Q)\\
\overset{\eqref{qbdry}}{=}&-8\TP^7 v_p~\hat{A}^{3p}~\TP\p_3 Q+8\TP^7v \cdot\pa\TP\eta_r~\hat{A}^{3r}\p_3Q\\
&+8\TP^7\eta_p\p_t\TP\left(\rho_0\p_tv^p-(b_0\cdot\TP)(J^{-1}\bp\eta)^{p}\right)+8\TP^7\eta_p~\p_t(\TP \hat{A}^{3p}~\p_3 Q)+8\TP^7\eta\cdot\p_t(\pa\TP\eta_r~\hat{A}^{3r}\p_3Q).
\end{align*}Then we use $H^{\frac32}(\T^2)\hookrightarrow L^{\infty}(\T^2)$, Lemma \ref{trace} and standard Sobolev trace lemma to get
\begin{equation}\label{modifyt}
\begin{aligned}
\left|\p_t(J\Delta_Q^*)|_{\Gamma}\right|_{0}\lesssim&|\TP^7 v|_0\left(|A~\TP\p Q|_{L^{\infty}}+|a~\TP\p\eta~ A~\p Q|_{L^{\infty}}\right)\\
&+|\TP^7\eta_p|_0\left(\left|\rho_0\p_t^2 v^p+\p_t(b_0\cdot\TP)b^{p}\right|_{W^{1,\infty}(\T^2)}+\left|\p_t(\TP \hat{A}^{3p}~\p_3 Q)+\p_t(\pa\TP\eta_r ~\hat{A}^{3r}~\p_3Q)\right|_{L^{\infty}}\right)\\
\lesssim&\|v\|_{8,*}\|\p Q\|_{5,*}P(\|\p\eta\|_3)\\
&+\|\eta\|_{8,*}\left(\|\rho_0\p_t^2 v+\p_t(b_0\cdot\TP)b^{p}\|_{5,*}+\left\|\p_t(\TP \hat{A}^{3p}~\p_3 Q)+\p_t(\pa\TP\eta_r ~\hat{A}^{3r}~\p_3Q)\right\|_2\right)\\
\lesssim& (\|\eta\|_{8,*}+\|v\|_{8,*}+\|b\|_{7,*})(\|Q,\eta,v\|_{7,*},\|b_0,\rho_0\|_3),
\end{aligned}
\end{equation}and thus
\begin{equation}\label{IB*32}
IB_{32}^*\lesssim\int_0^T \left|A^{3i}\TP^8\eta_i\right|_0 P(\|\eta\|_{8,*},\|v\|_{8,*},\|Q\|_{8,*},\|b\|_{8,*},\|\rho_0\|_3)\dt.
\end{equation}

From \eqref{IB*B}, we know it suffices to control the product of ``error part" $IB_5^*$
\[
IB_5^*\lesssim |\hat{A}^{3i}|_{L^{\infty}}|\Delta_Q^*|_{\Gamma}|_0|(\Delta_v^*)_i|_0,
\]and the RHS can be directly controlled by Lemma \ref{trace} and standard trace lemma
\begin{align*}
\left|\Delta_Q^*|_{\Gamma}\right|_0\lesssim&\left|\TP^7\eta_p\right|_0\left(|A^{3p}\TP\p_3 Q|_{L^{\infty}}+|\pa^p\TP\eta_r~A^{3r}\p_3Q|_{L^{\infty}}\right)\lesssim P(\|Q\|_{7,*},\|\eta\|_{7,*})\|\eta\|_{8,*},
\end{align*}
\begin{align*}
\left|\Delta_v^*\right|_0\lesssim&|\TP^7\eta|_0\left(|\pa\TP v|_{L^{\infty}}+|\pa\TP\eta\cdot\pa v|_{L^{\infty}}+|\pa v\cdot\pa \TP\eta|_{L^{\infty}}\right)+|\TP^7v|_0|\pa\TP\eta|_{L^{\infty}}\\
\lesssim & P(\|\eta\|_{7,*})(\|v\|_{8,*}+\|v\|_{7,*}\|\eta\|_{8,*}).
\end{align*}

Therefore,
\begin{equation}\label{IB*5}
IB_{5}^*\lesssim P(\|\eta\|_{8,*},\|v\|_{8,*},\|Q\|_{7,*}),
\end{equation} and similarly
\begin{equation}\label{IB*6}
IB_6^*\lesssim |\hat{A}^{3i}\p_3 Q|_{L^{\infty}}|A^{3p}\TP^8\eta_p|_0|(\Delta_v^*)_i|_0.
\end{equation}

Summarizing \eqref{IB*B}-\eqref{IB*6} gives the control of the boundary integral
\begin{equation}\label{IB*}
\int_0^T IB^*\lesssim -\frac{c_0}{4}\left|A^{3i}\TP^8\eta_i\right|_0^2+\PP_0+ P(\EE(T))\int_0^T P(\EE(t))\dt.
\end{equation}

Combining \eqref{t8energy0}, \eqref{t83}, \eqref{t81}, \eqref{I*} and \eqref{IB*} and choosing $\varepsilon>0$ in \eqref{I*21} to be suitably small, we get the following energy inequality
\begin{equation}\label{t8energy}
\|\VV^*\|_0^2+\left\|\TP^8\left(J^{-1}\bp\eta\right)\right\|_0^2+\|\TP^8 q\|_0^2+\frac{c_0}{4}\left|A^{3i}\TP^8\eta_i\right|_0^2\bigg|_{t=T}\lesssim \PP_0+ P(\EE(T))\int_0^T P(\EE(t))\dt.
\end{equation} Finally, invoking \eqref{t8goodv}, we get the $\TP^8$-estimates of $v$
\begin{align*}
\|\TP^8v\|_0\lesssim&\|\VV^*\|_0+\|\TP^8\eta\|_0\|\pa v_i\|_{L^{\infty}}+\|\TP^7\eta\|_0\left(\|\pa\TP v\|_{L^{\infty}}+\|\pa\TP \eta\cdot\pa v\|_{L^{\infty}}\right)+\|\TP^7v\|_0\|\pa\TP\eta\|_{L^{\infty}}.
\end{align*}
Since $\p^m\eta|_{t=0}=0$ for any $m\geq 2, m\in \N^*$, we know
\begin{equation}\label{t8gap}
\|\TP^8v\|_0\lesssim\|\VV^*\|_0+P(\|v\|_{7,*},\|\eta\|_{7,*})\int_0^T P(\|v\|_{8,*}),
\end{equation}and thus
\begin{equation}\label{t8}
\|\TP^8 v\|_0^2+\left\|\TP^8\left(J^{-1}\bp\eta\right)\right\|_0^2+\|\TP^8 q\|_0^2+\frac{c_0}{4}\left|A^{3i}\TP^8\eta_i\right|_0^2\bigg|_{t=T}\lesssim \PP_0+ P(\EE(T))\int_0^T P(\EE(t))\dt.
\end{equation}

\subsection{The case of one time derivative $\TP^7\p_t$}\label{ts7}
If we replace $\p_*^I=\TP^8$ by $\TP^7\p_t$, then most of steps in the proof above are still applicable because we do not integrate the derivative(s) in $\dd^8$ by parts. However, we still need to do the following modifications due to the presence of time derivative.

\subsubsection{Extra difficulty: non-vanishing initial data of $\p_*^I \eta$}

If $\p_*^I=\TP^7\p_t$, then we can no longer derive $\TP^7\p_t\eta|_{t=0}=\mathbf{0}$ from $\eta|_{t=0}=\text{Id}$ due to the presence of time derivative and $\p_t \eta=v$. This property is used in the analysis of $IB_{33}^*$ and the control of the difference between $\VV^*$ and $\p_*^I v$. Before we analyze the analogues of $IB_{33}^*$ and \eqref{t8gap} in the case of $\p_*^I=\TP^7\p_t$, we have to find out the precise form of the modified Alinhac good unknowns when $\p_*^I=\TP^7\p_t$.

\subsubsection{The modified Alinhac good unknowns}\label{ts7AGU}

Recall the ``extra modification terms" $\Delta_Q^*,\Delta_v^*$ in \eqref{t8goodeq} come from the bad terms \eqref{t8bad}. Now we replace $\TP^8$ by $\TP^7\p_t$. In $e_1,e_2,e_3$ in \eqref{t8bad}, if we replace $\TP^7$ by $\TP^6\p_t$ (i.e., the time derivative falls on the higher order term), then their $L^2$ norms can be directly controlled since $\p_t a$ has the same spatial regularity as $a$. Therefore, the remaining quantities whose $L^2$-norms cannot be directly controlled in the case of $\p_*^I=\TP^7\p_t$ are
\begin{equation}\label{t7bad}
\begin{aligned}
e_1:=-\TP^7(A^{lr}A^{mi})\p_t\p_m\eta_r~\p_l f,&~~~e_2:=-7\p_t(A^{lr}A^{mi})~\TP^7\p_m\eta_r~\p_l f\\
e_3:=8(\TP^7A^{li})\p_t\p_l f,&~~~e_4:=(\p_tA^{li})(\TP^7\p_l f)+7(\TP A^{li})(\TP^6\p_t\p_l f).
\end{aligned}
\end{equation} Then the corresponding Alinhac good unknowns now becomes (with the abuse of terminology)
\begin{equation}\label{t7goodeq}
\VV^*=\TP^7\p_t v-\TP^7\p_t\eta\cdot\pa v+\Delta_v^*,~~\QQ^*=\TP^7\p_t Q-\TP^7\p_t\eta\cdot\pa Q+\Delta_Q^*.,
\end{equation}where
\begin{equation}\label{t7error}
\begin{aligned}
(\Delta_v^*)_i:=&-8\TP^7\eta\cdot\pa\p_t v_i-8\TP^7v\cdot\pa v_i+16\TP^7\eta\cdot\pa v\cdot\pa v_i,\\
\Delta_Q^*:=&-8\TP^7\eta\cdot\pa\p_t Q+8\TP^7\eta\cdot\pa v\cdot\pa Q,
\end{aligned}
\end{equation}and
\begin{equation}\label{t7goodgrad}
\TP^7\p_t(\diva v)=\pa\cdot \VV^*+C^*(v),~~~\TP^7\p_t(\pa Q)=\pa\QQ^*+ C^*(Q),
\end{equation}with
\[
\|C^*(f)\|_0\lesssim P(\EE(t))\|f\|_{8,*}.
\]

Now, the analogue of $IB_{33}^*$ becomes the following quantity (recall such term comes from the product of $\Delta_Q$ and $\TP^7\p_t v$
\begin{equation}\label{IB*330}
\ig JA^{3i}N_3(8\TP^7\eta_p~A^{3p}~\p_t\p_3 Q-8\TP^7\eta\cdot\pa\p_t\eta_r~A^{3r}~\p_3Q)\TP^7\p_t\eta_i\dyy,
\end{equation}and we can still use $\TP^7\eta|_{t=0}=\mathbf{0}$.

The analogue of \eqref{t8gap} now needs some small modifications
\begin{equation}\label{t7gap}
\begin{aligned}
\|\TP^7\p_tv\|_0\lesssim&\|\VV^*\|_0+\|\TP^7 v\|_0\|\pa v\|_{L^{\infty}}+\|\TP^7\eta\|_0\left(8\|\pa\p_t v\|_{L^{\infty}}+16\|\pa v\|_{L^{\infty}}^2\right)\\
\lesssim&\|\VV^*\|_0+\|\TP^7 v\|_0^2+\|\pa v\|_{2}^2+\|\pa\TP v\|_{2}^2+\left(8\|\pa\p_t v\|_{L^{\infty}}+16\|\pa v\|_{L^{\infty}}^2\right)\int_0^T\|\TP^7 v\|_0\dt\\
\lesssim&\|\VV^*\|_0^2+\PP_0+\int_0^TP\left(\|\TP^7\p_t v\|_0,\|\p_t\p v\|_{2},\|\p_t\p\TP v\|_{2},\|\p\TP\eta\|_{2}^2\right)+P(\EE(T))\int_0^T P(\EE(t))\dt\\
\lesssim&\|\VV^*\|_0^2+\PP_0+P(\EE(T))\int_0^T P(\EE(t))\dt.
\end{aligned}
\end{equation}

The remaining analysis should follow in the same way as in Section \ref{ts8energy}, so we omit those details. The result is
\begin{equation}\label{t7}
\|\TP^7\p_t v\|_0^2+\left\|\TP^7\p_t\left(J^{-1}\bp\eta\right)\right\|_0^2+\|\TP^7\p_t q\|_0^2+\frac{c_0}{4}\left|A^{3i}\TP^7\p_t\eta_i\right|_0^2\bigg|_{t=T}\lesssim \PP_0+ P(\EE(T))\int_0^T P(\EE(t))\dt.
\end{equation}

\subsection{The case of 2$\sim$7 time derivatives}\label{ts6}

If the number of time derivatives in $\p_*^I$ is between 2 and 7, i.e,. $\p_*^I$ contains at least one spatial and two time derivatives, we can still mimic most steps in Section \ref{ts7}. In this case we write $\p_*^I=\dd^6\p_t^2$ where $\dd=\TP$ or $\p_t$ and $\dd^6$ contains at least one $\TP$.

The extra time derivatives allow us to eliminate most of the ``extra modification terms" in the modified Alinhac good unknowns as in \eqref{t8goodeq}, \eqref{t7goodeq}-\eqref{t7error} and thus much simplify the analysis of Alinhac good unknowns and tne boundary control. The reason is that the $L^2$-norm of the analogues of $e_1\sim e_3$ in \eqref{t8bad} can be directly controlled in the case of $\dd^8=\dd^6\p_t^2$. In specific, we have
\begin{equation}\label{t6bad}
\begin{aligned}
\dd^6\p_t^2(\pa^{i}f)=&\pa^i(\dd^6\p_t^2 f)+(\dd^6\p_t^2A^{li})\p_l f+[\dd^6\p_t^2,A^{li},\p_l f] \\
=&\pa^{i}(\dd^6\p_t^2f)-\dd^6\p_t(A^{lr}~\p_t\p_{m}\eta_{r}~A^{mi})\p_l f+[\dd^6\p_t^2,A^{li},\p_{l} f] \\
=&\pa^{i}(\dd^6\p_t^2 f-\dd^6\p_t^2\eta_{r}~A^{lr}\p_l f)+\underbrace{\dd^6\p_t^2\eta_{r}~\pa^{i}(\pa^{r} f)-([\dd^6\p_t,A^{lr}A^{mi}]\p_t\p_{m}\eta_{r})\p_l f}_{C_0(f)}+[\dd^6\p_t^2,A^{li},\p_{l} f]
\end{aligned}
\end{equation}and
\[
\|C_0(f)\|_0\lesssim P(\|\eta\|_{8,*},\|v\|_{8,*})\|f\|_{8,*}.
\]

Therefore, the analogous analysis of $C_1,C_3\sim C_6$ in Section \ref{ts8} are no longer needed here. The only problematic term is $-2(\p_tA^{li})(\dd^6\p_t\p_l f)-6(\dd A^{li})(\dd^5\p_t^2\p_l f)$ which comes from $[\dd^6\p_t^2,A^{li},\p_{l} f]$. By mimicing the treatment of $C_2(Q)$ and $C_2(v)$ in \eqref{t8C2v}-\eqref{t8C2Q}, we can define the modified Alinhac good unknowns in the case of $\p_*^I=\p_t^N\TP^{8-N}~(2\leq N\leq 7)$ as the following
\begin{equation}\label{t6goodeq}
\VV^*=\dd^6\p_t^2 v-\dd^6\p_t^2\eta\cdot\pa v+\Delta_v^*,~~\QQ^*=\dd^6\p_t^2 Q-\dd^6\p_t^2\eta\cdot\pa Q,
\end{equation}where
\begin{equation}\label{t6error}
(\Delta_v^*)_i:=-6\dd^5\p_t^2v\cdot\pa\dd\eta_i-2\dd^6\p_tv\cdot\pa v_i
\end{equation}and
\begin{equation}\label{t6goodgrad}
\dd^6\p_t^2(\diva v)=\pa\cdot \VV^*+C^*(v),~~~\dd^6\p_t^2(\pa Q)=\pa\QQ^*+C^*(Q),
\end{equation}with
\[
\|C^*(f)\|_0\lesssim P(\EE(t))\|f\|_{8,*}.
\]

In this case, $\Delta_Q^*=0$, and thus the boundary integrals $IB_3^*,IB_4^*,IB_5^*$ all vanish. The analogues of $IB_1^*,IB_2^*,IB_6^*$ in this case can still be controlled in the same way as in Section \ref{ts8energy}. In the control of the difference between $\VV^*$ and $\dd^6\p_t^2$, we have by \eqref{t6goodeq}-\eqref{t6error} that
\begin{equation}\label{t6gap}
\begin{aligned}
\|\dd^6\p_t^2 v\|_0\lesssim&\|\VV^*\|_0+\|\dd^6\p_t v\|_0\|\pa v\|_{L^{\infty}}+\|\dd^5\p_t^2 v\|_0\|\pa\dd\eta\|_{L^{\infty}}\\
\lesssim&\|\VV^*\|_0+\|\dd^6\p_t v\|_0^2+\|\pa v\|_{2}^2+\|\dd^5\p_t^2 v\|_0^2+\|\pa\dd\eta\|_{2}^2\\
\lesssim&\|\VV^*\|_0+\PP_0+\int_0^TP\left(\|\dd^6\p_t^2 v\|_0,\|\dd^5\p_t^3 v\|_0,\|\p\dd v\|_2\right)\dt\lesssim\|\VV^*\|_0+\PP_0+\int_0^T P(\EE(t))\dt
\end{aligned}
\end{equation}

The remaining analysis should follow in the same way as in Section \ref{ts8energy} and \ref{ts7} so we omit the details. The result is
\begin{equation}\label{t6}
\|\dd^6\p_t^2 v\|_0^2+\left\|\dd^6\p_t^2\left(J^{-1}\bp\eta\right)\right\|_0^2+\|\dd^6\p_t^2 q\|_0^2+\frac{c_0}{4}\left|A^{3i}\dd^6\p_t^2\eta_i\right|_0^2\bigg|_{t=T}\lesssim \PP_0+ P(\EE(T))\int_0^T P(\EE(t))\dt,
\end{equation}where $\dd^6$ contains at least one spatial derivative $\TP$.

\subsection{The case of full time derivatives}\label{ts0}

In the case of full time derivatives, the modified Alinhac good unknown is still defined similarly as in \eqref{t6goodeq}-\eqref{t6goodgrad}:\begin{equation}\label{t0goodeq}
\VV^*=\p_t^8 v-\p_t^8\eta\cdot\pa v+\Delta_v^*,~~\QQ^*=\p_t^8 Q-\p_t^8\eta\cdot\pa Q,
\end{equation}where
\begin{equation}\label{t0error}
(\Delta_v^*)_i:=-8\p_t^7v\cdot\pa v_i
\end{equation}and
\begin{equation}\label{t0goodgrad}
\p_t^8(\diva v)=\pa\cdot \VV^*+C^*(v),~~~\p_t^8(\pa Q)=\pa\QQ^*+C^*(Q),
\end{equation}with
\[
\|C^*(f)\|_0\lesssim P(\EE(t))\|f\|_{8,*}.
\]

\paragraph*{Extra difficulty: trace lemma is no longer applicable}

When $\p_*^I=\p_t^8$, there are terms of the form $\p_t^7 v$ in the boundary integrals. In the case of full time derivative, one cannot apply Lemma \ref{trace} to control $|\p_t^7 v|_0$. This difficulty appears in the estimates of the analogue of $IB_6^*$.
 Instead, we need to write $IB_6^*$ in terms of interior integrals by using a similar technique in \eqref{n3bdry2}.
\begin{equation}\label{IB*60}
\begin{aligned}
IB_6^*=&-8\ig \hat{A}^{3i}N_3~\p_t^8\eta_p~A^{3p}\p_3 Q~\p_t^7v_r~A^{lr}\p_l v_i\dyy=-8\ig \hat{A}^{3i}N_3~\p_t^7 v_p~A^{3p}\p_3 Q~\p_t^7v_r~A^{lr}\p_l v_i\dyy\\
=&-8\io \hat{A}^{3i}\p_3\p_t^7 v_p~A^{3p}\p_3 Q~\p_t^7v_r~A^{lr}\p_l v_i\dy-8\io \hat{A}^{3i}\p_t^7 v_p~A^{3p}\p_3 Q~\p_3\p_t^7v_r~A^{lr}\p_l v_i\dy\\
&-8\io\p_t^7 v_p~\p_t^7 v_r~\p_3(\hat{A}^{3i}A^{3p}\p_3 Q~A^{lr}\p_l v_i)\dy\\
=:&IB_{61}^*+IB_{62}^*+IB_{63}^*.
\end{aligned}
\end{equation}

The term $IB_{63}^*$ can be directly controlled
\begin{equation}\label{IB*63}
IB_{63}^*\lesssim P(\|\p_t^7 v\|_{0},\|\p v\|_3,\|\p Q\|_3,\|A\|_3)\lesssim P(\|v\|_{8,*},\|Q\|_{8,*},\|\eta\|_{4}).
\end{equation}

The term $IB_{61}^*,IB_{62}^*$ should be controlled by integrating $\p_t$ by parts under time integral.
\begin{equation}\label{IB*61}
\begin{aligned}
\int_0^TIB_{61}^*=&-8\int_0^T\io \hat{A}^{3i}\p_3\p_t^7 v_p~A^{3p}\p_3 Q~\p_t^7v_r~A^{lr}\p_l v_i\dy\dt\\
\overset{\p_t}{=}&-8\io \hat{A}^{3i}\p_3\p_t^6 v_p~A^{3p}\p_3 Q~\p_t^7v_r~A^{lr}\p_l v_i\dy\\
&+8\int_0^T\io \hat{A}^{3i}\p_3\p_t^6 v_p~A^{3p}\p_3 Q~\p_t^8v_r~A^{lr}\p_l v_i\dy\dt\\
&+8\int_0^T\io \hat{A}^{3i}\p_3\p_t^6 v_p~\p_t^7v_r~\p_t(A^{3p}\p_3 Q~A^{lr}\p_l v_i)\dy\dt\\
\lesssim &\|\p_3\p_t^6 v\|_0\|\p_t^7 v\|_0P(\|A\|_{L^{\infty}},\|\p Q\|_{L^{\infty}},\|\p v\|_{L^{\infty}})\\
&+\int_0^T \|\p_3\p_t^6 v\|_0\left(\|\p_t^8 v\|_0 P(\|A\|_{L^{\infty}},\|\p Q\|_{L^{\infty}},\|\p v\|_{L^{\infty}})+\|\p_t^7 v\|_0\|A\cdot\p_t(A\cdot\p Q\cdot A\cdot\p v)\|_{L^{\infty}}\right)\\
\lesssim&\varepsilon\|\p_3\p_t^6 v\|_0^2 +\PP_0+\int_0^T P\left(\|\p_t^8 v\|_0,\|(\p_t A,\p_t\p Q,\p_t\p v\|_{L^{\infty}}^2\right)+\int_0^T P(\|v\|_{8,*},\|Q\|_{7,*},\|\eta\|_{3})\dt\\
\leq&\varepsilon\|\p_3\p_t^6 v\|_0^2 +\PP_0+\int_0^T P(\EE(t))\dt,
\end{aligned}
\end{equation}

$IB_{62}^*$ can be controlled in the same way, so we omit the details. Summarizing the estimates above, we get the energy inequality of the full time derivatives
\begin{equation}\label{t0}
\|\p_t^8 v\|_0^2+\left\|\p_t^8 \left(J^{-1}\bp\eta\right)\right\|_0^2+\|\p_t^8  q\|_0^2+\frac{c_0}{4}\left|A^{3i}\p_t^8 \eta_i\right|_0^2\bigg|_{t=T}\lesssim \varepsilon\|\p_3\p_t^6 v\|_0^2+\PP_0+ P(\EE(T))\int_0^T P(\EE(t))\dt,
\end{equation}which together with \eqref{t8}, \eqref{t7}, \eqref{t6} concludes the proof of Proposition \ref{prop t8}.

\section{Control of mixed non-weighted derivatives}\label{mixed}

The case of mixed non-weighted derivatives correspond to $\p_*^I=\p_t^{i_0}(\sigma\p_3)^{i_4}\TP_{1}^{i_1}\TP_2^{i_2}\p_3^{i_3}$ with $1\leq i_3\leq 3,~i_4=0$. In this case, the modified Alinhac good unknowns introduced in Section \ref{sect t8} are still needed when commuting $\p_*^I$ with $\pa$. On the other hand, the highest order term $\p_*^I Q$ no longer vanishes on the boundary due to the presence of normal derivatives, so we need to use the method in Section \ref{sect n4} to deal with the boundary integral. Therefore, we should combine the methods in Section \ref{sect n4} and Section \ref{sect t8} to get the control of mixed non-weighted derivatives. The result of this section is
\begin{prop}\label{prop mixed}
The following energy inequality holds for sufficiently small $\eps>0$
\begin{equation}\label{tntn}
\sum_{1\leq i_3\leq 3,~i_4=0}\|\p_*^I v\|_0^2+\left\|\p_*^I\left(J^{-1}\bp\eta\right)\right\|_0^2+\|\p_*^I q\|_0^2+\frac{c_0}{4}\left|A^{3i}\p_*^I \eta_i\right|_0^2\bigg|_{t=T}\lesssim\varepsilon\|\p_3^4 v\|_0^2+\PP_0+ P(\EE(T))\int_0^T P(\EE(t))\dt.
\end{equation}
\end{prop}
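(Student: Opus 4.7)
The plan is to combine the modified Alinhac good unknown machinery from Section \ref{sect t8} with the boundary-reduction arguments of Section \ref{sect n4}. Write $\p_*^I = \dd^{8-2i_3}\p_3^{i_3}$, where $\dd$ denotes a generic tangential derivative in $\{\TP_1,\TP_2,\p_t\}$, and introduce modified good unknowns
\[
\VV^* = \p_*^I v - \p_*^I\eta\cdot\pa v + \Delta_v^*, \qquad \QQ^* = \p_*^I Q - \p_*^I\eta\cdot\pa Q + \Delta_Q^*,
\]
where $\Delta_v^*, \Delta_Q^*$ are correction terms built exactly as in Section \ref{ts8AGU} by the three techniques (rewriting bad commutator pieces as covariant derivatives of a correction plus $L^2$-bounded terms, producing weight functions $\sigma(y_3)$ via $\TP_L Q|_\Gamma = 0$ and $b_0^3|_\Gamma = 0$, and exchanging $\pA Q$ for $-\rho_0\p_t v + \bp(J^{-1}\bp\eta)$). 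One then obtains the analogues of \eqref{t8goodgrad}, so that the $\p_*^I$-commuted momentum equation becomes
\[
R\p_t \VV^* - J^{-1}\bp\,\p_*^I(J^{-1}\bp\eta) + \pa\QQ^* = \FF^*, \qquad \|\FF^*\|_0 \lesssim P(\EE(t)).
\]

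Taking the $L^2$-inner product with $J\VV^*$ reproduces, as in Sections \ref{sect n4} and \ref{ts8energy}, the time derivatives of $\tfrac12\|\sqrt{\rho_0}\VV^*\|_0^2$, $\tfrac12\|\sqrt{J}\,\p_*^I(J^{-1}\bp\eta)\|_0^2$ and $\tfrac12\|\sqrt{J^2 R'(q)/\rho_0}\,\p_*^I q\|_0^2$, with a cancellation between the $K_{11}^*$ term (coming from $\p_t J = J\diva v$ in the magnetic energy identity) and the corresponding contribution of $I_2^*$ obtained by expanding $Q = q + \tfrac12|J^{-1}\bp\eta|^2$. All other interior remainders are bounded either directly, or by integrating $\p_t$ by parts in $\int_0^T$ as in \eqref{I*21}--\eqref{I*3}. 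The gap $\|\p_*^I v\|_0 - \|\VV^*\|_0$ is absorbed using $\p_*^I\eta|_{t=0} = 0$ (or its time-integrated analogue \eqref{t7gap}) and Grönwall.

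The principal work lies in the boundary integral $IB^* = -\ig JA^{3i} N_3 \QQ^* \VV_i^* \dyy$. Unlike Section \ref{sect t8}, here $\p_*^I Q|_\Gamma \neq 0$ because $i_3 \geq 1$, so we factor $\p_*^I = \p_*^{I-e_3}\p_3$ with $\langle I - e_3\rangle = 6$ and split $IB^*$ into the Rayleigh--Taylor contribution $IB_1^* + IB_2^*$, which via $\p_t A^{3i} = -A^{3r}\p_k v_r A^{ki}$ produces $-\tfrac{c_0}{4}\tfrac{d}{dt}\bigl|A^{3i}\p_*^I\eta_i\bigr|_0^2$; the analogues of $IB_0 = IB_{01} + IB_{02}$ coming from the surviving $\p_*^I Q$; and the error terms carrying $\Delta_Q^*, \Delta_v^*$. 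For $IB_{01}, IB_{02}$ we reduce the normal derivatives using exactly \eqref{vbdry}--\eqref{qbdry}, with the same cancellation between the $L=1,2$ contribution of $\p_*^{I-e_3}A^{Li}$ and the $l=1,2$ part of $IB_{02}$ as in \eqref{IB0121}--\eqref{IB022}. Every leftover top-order boundary term then takes the schematic form $\ig N_3(\p_*^{I-e_3}\dd f)(\p_*^{I-e_3}\dd g) h \dyy$ with $\dd \in \{\TP, \p_t, b_0\cdot\TP\}$, which we move to the interior and integrate $\dd$ by parts as in \eqref{n3bdry2}, trading one normal derivative for two tangential ones; terms involving $\Delta_Q^*, \Delta_v^*$ are handled by Lemma \ref{trace} combined with the boundary energy, or again by the same divergence-theorem trick when the trace lemma fails (e.g. when $\p_*^I$ is too time-heavy).

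The main obstacle is the case $i_3 = 3$: after the reduction \eqref{n3bdry2} we are left with products like $\|\p_3^4 v\|_0 \cdot \|\p_3^3 \dd^2 g\|_0$ that cannot be absorbed by $\EE(t)$ alone. By Young's inequality this yields exactly the $\eps\|\p_3^4 v\|_0^2$ appearing on the right of \eqref{tntn}; this term is harmless because it will be absorbed using the purely-normal estimate \eqref{n4} at the final Grönwall step in Section \ref{apriori}. When $\dd = \p_t$ the corresponding computation is carried out under $\int_0^T$ as in \eqref{n3bdry22} to prevent regularity loss. Assembling these pieces gives \eqref{tntn}.
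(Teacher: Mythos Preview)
Your proposal is correct and follows essentially the same approach as the paper: modified Alinhac good unknowns from Section~\ref{sect t8} combined with the normal-derivative boundary reduction of Section~\ref{sect n4}, split of $IB^\sh$ into the Rayleigh--Taylor piece, the $IB_0^\sh$ piece carrying $\p_*^I Q$, and the correction pieces involving $\Delta_Q^\sh,\Delta_v^\sh$.

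Two small clarifications relative to the paper's execution. First, for the $IB_0^\sh$ terms, the paper does \emph{not} use the interior reduction \eqref{n3bdry2} when $\p_*^I$ contains at least one spatial tangential derivative $\TP$: since $7-2i_3\geq 1$ for $1\le i_3\le 3$, the anisotropic trace lemma (Lemma~\ref{trace}) applies directly and yields the bounds \eqref{tnbdry1}--\eqref{tnbdry3} without any integration by parts in the interior. Your uniform use of \eqref{n3bdry2} also works, but the trace-lemma route is cleaner. Second, the precise source of the $\eps\|\p_3^4 v\|_0^2$ term is not the $IB_0^\sh$-reduction but the analogue of $IB_6^*$ (the term carrying $\Delta_v^\sh$) in the \emph{full-time-derivative} subcase $\p_*^I=\p_3^3\p_t^2$: there $\p_*^I\eta=\p_3^3\p_t v$ and $(\Delta_v^\sh)_i=-2\p_3^3\p_t v\cdot\pa v_i$, so the boundary integrand contains a product $(\p_3^3\p_t v)(\p_3^3\p_t v)$; moving to the interior produces $\p_3^4\p_t v$, and integrating $\p_t$ by parts under $\int_0^T$ leaves $\|\p_3^4 v\|_0\|\p_3^3\p_t v\|_0$ at $t=T$, exactly as in \eqref{IB*60}--\eqref{IB*61} and recorded in \eqref{tn0}. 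For $i_3=1,2$ the analogous procedure yields $\eps\|\p_3^{i_3+1}\p_t^{6-2i_3}v\|_0^2$, which lies on the left-hand side of \eqref{tntn} and is absorbed for small $\eps$; only $i_3=3$ spills over to the purely-normal quantity.
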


\subsection{Purely spatial derivatives}\label{sect tn}

We still start with the control of purely spatial derivatives. Let $N=1,2,3$ and we consider $\p_*^I=\p_3^N\TP^{8-2N}$.

\subsubsection{The modified Alinhac good unknowns}

Similarly as in Section \ref{ts8AGU}, we have
\begin{equation}\label{tnproof1}
\begin{aligned}
\p_3^N\TP^{8-2N}(\pa^{i}f)=&\pa^i(\p_3^N\TP^{8-2N} f)+(\p_3^N\TP^{8-2N}A^{li})\p_l f+[\p_3^N\TP^{8-2N},A^{li},\p_l f] \\
=&\pa^{i}(\p_3^N\TP^{8-2N} f)-\p_3^N\TP^{7-2N}(A^{lr}~\TP\p_{m}\eta_{r}~A^{mi})\p_l f+[\p_3^N\TP^{8-2N},A^{li},\p_{l} f] \\
=&\pa^{i}(\p_3^N\TP^{8-2N} f-\p_3^N\TP^{8-2N}\eta_{r}~A^{lr}~\p_l f)+(\p_3^N\TP^{8-2N}\eta_{r})\pa^{i}(\pa^{r} f)\\
&-([\p_3^N\TP^{7-2N},A^{lr}A^{mi}]\TP\p_{m}\eta_{r})\p_l f+[\p_3^N\TP^{8-2N},A^{li},\p_{l} f],
\end{aligned}
\end{equation}
where the last line still contains the terms whose $L^2(\Omega)$-norms cannot be directly bounded under the setting of anisotropic Sobolev space $H_*^8(\Omega)$. The reason is that $\p_3^N\TP^{7-2N}$ may fall on $A=\p\eta\times\p\eta$ and $\p_l f$. The following quantities are exactly these terms.
\begin{equation}\label{tnbad}
\begin{aligned}
e_1^\sh:=-\p_3^N\TP^{7-2N}(A^{lr}A^{mi})(\TP\p_m\eta_r)\p_l f,&~~~e_2^\sh:=-(7-2N)\TP(A^{lr}A^{mi})(\p_3^N\TP^{7-2N}\p_m\eta_r)\p_l f,\\
e_3^\sh:=(8-2N)(\p_3^N\TP^{7-2N}A^{li})(\TP\p_l f),&~~~e_4^\sh:=(8-2N)(\TP A^{li})(\p_3^N\TP^{7-2N}\p_l f).
\end{aligned}
\end{equation}

One can mimic the derivation of \eqref{t8goodv} and \eqref{t8goodq} to define the ``modified Alinhac good unknowns" of $v$ and $Q$ with respect to $\p_3^N\TP^{8-2N}$ to be
\begin{equation}\label{tngoodv}
\begin{aligned}
\VV_i^\sh:=&\p_3^N\TP^{8-2N}v_i-\p_3^N\TP^{8-2N}\eta\cdot\pa v_i\\
&-(8-2N)\p_3^N\TP^{7-2N}\eta\cdot\pa\TP v_i-(8-2N)\p_3^N\TP^{7-2N}v\cdot\pa\TP\eta_i\\
&+(8-2N)\p_3^N\TP^{7-2N}\eta\cdot\pa\TP\eta\cdot\pa v_i+(8-2N)\p_3^N\TP^{7-2N}\eta\cdot\pa v\cdot\pa \TP\eta_i,
\end{aligned}
\end{equation}and
\begin{equation}\label{tngoodq}
\begin{aligned}
\QQ^\sh:=&\p_3^N\TP^{8-2N}Q-\p_3^N\TP^{8-2N}\eta\cdot\pa Q\\
&-(8-2N)\p_3^N\TP^{7-2N}\eta\cdot\pa\TP Q+(8-2N)\p_3^N\TP^{7-2N}\eta\cdot\pa\TP\eta\cdot\pa Q.
\end{aligned}
\end{equation} Then $\VV^\sh$ and $\QQ^\sh$ satisfy the following relations
\begin{equation}\label{tngoodgrad}
\p_3^N\TP^{8-2N}(\diva v)=\pa\cdot \VV^\sh+C^\sh(v),~~~\p_3^N\TP^{8-2N}(\pa Q)=\pa\QQ^*+ C^\sh(Q),
\end{equation}where the commutator $C^\sh$ satisfies
\begin{equation}\label{tnC0}
\|C^\sh(f)\|_{0}\lesssim P(\EE(t))\|f\|_{8,*}.
\end{equation}

Denote $\Delta_v^\sh$ and $\Delta_Q^\sh$ to be
\begin{align*}
(\Delta_v^\sh)_i:=&-(8-2N)\p_3^N\TP^{7-2N}\eta\cdot\pa\TP v_i-(8-2N)\p_3^N\TP^{7-2N}v\cdot\pa\TP\eta_i\\
&+(8-2N)\p_3^N\TP^{7-2N}\eta\cdot\pa\TP\eta\cdot\pa v_i+(8-2N)\p_3^N\TP^{7-2N}\eta\cdot\pa v\cdot\pa \TP\eta_i,\\
\Delta_Q^\sh:=&-(8-2N)\p_3^N\TP^{7-2N}\eta\cdot\pa\TP Q+(8-2N)\p_3^N\TP^{7-2N}\eta\cdot\pa\TP\eta\cdot\pa Q.
\end{align*} Then we can derive the evolution equation of $\VV^\sh$ and $\QQ^\sh$
\begin{equation}\label{tngoodeq}
\begin{aligned}
&R\p_t\VV^\sh-J^{-1}\bp\p_3^N\TP^{8-2N}\left(J^{-1}\bp\eta\right)+\pa \QQ^\sh\\
=& [R,\p_3^N\TP^{8-2N}]\p_t v+\left[J^{-1}\bp,\p_3^N\TP^{8-2N}\right]\left(J^{-1}\bp\eta\right)\\
&+C^\sh(Q)+R\p_t(-\p_3^N\TP^{8-2N}\eta\cdot\pa v+\Delta_v^\sh).
\end{aligned}
\end{equation}

Denote the RHS of \eqref{tngoodeq} to be $\FF^\sh$, then direct computation yields that
\[
\|\FF^\sh\|_0\lesssim P\left(\|\eta\|_{8,*},\|v\|_{8,*},\|Q\|_{8,*}\right).
\]

Now we take $L^2(\Omega)$ inner product of \eqref{tngoodeq} and $J\VV^\sh$ to get the following energy identity
\begin{equation}\label{tngoodenergy0}
\begin{aligned}
\frac{1}{2}\ddt\io\rho_0|\VV^\sh|^2\dy=\io\bp\p_3^N\TP^{8-2N}\left(J^{-1}\bp\eta\right)\cdot\VV^{\sh}-\io (\pA Q)\cdot\VV^\sh+\io J\FF^\sh\cdot\VV^\sh.
\end{aligned}
\end{equation}

\subsubsection{Interior estimates}

The last integral on RHS of \eqref{tngoodenergy0} is directly controlled
\begin{equation}\label{tn3}
\io J\FF^\sh\cdot\VV^\sh\lesssim\io \|\FF^\sh\|_0\|\VV^\sh\|_0.
\end{equation}

Then for the first term on RHS of \eqref{tngoodenergy0} we integrate $\bp$ by parts to produce the energy of magnetic field. Again, there is one term which cannot be directly controlled but will cancel with another term produced by $-\io (\pA Q)\cdot\VV^\sh$. The proof follows in the same way as \eqref{n41} so we omit the details.
\begin{equation}\label{tn1}
\begin{aligned}
&\io\bp\p_3^N\TP^{8-2N}\left(J^{-1}\bp\eta\right)\cdot\VV^{\sh}\dy\\
\lesssim&-\frac{1}{2}\ddt\io J\left|\p_3^N\TP^{8-2N}(J^{-1}\bp\eta)\right|^2\dy+K_{11}^\sh+P\left(\left\|(\eta,v,b_0,\bp)\right\|_{8,*}\right),
\end{aligned}
\end{equation}where
\begin{equation}\label{tnK1}
K_{11}^\sh:=-\io  J\p_3^N\TP^{8-2N}(J^{-1}\bp\eta)\cdot\left(J^{-1}\bp\eta\right)\p_3^N\TP^{8-2N}(\diva v)\dy.
\end{equation}

Next we analyze the term $-\io (\pA Q)\cdot\VV^\sh$. Integrating by parts and using Piola's identity $\p_l\hat{A}^{li}=0$, we get
\begin{equation}\label{tn20}
-\io (\pA Q)\cdot\VV^\sh\dy=\io J\QQ^\sh(\nabla_A\cdot \VV^\sh)\dy-\ig J\QQ^\sh A^{li}N_l\VV_i^\sh\dyy=:I^\sh+IB^\sh.
\end{equation}

Plugging \eqref{tngoodgrad} into $I^\sh$, we get
\begin{equation}\label{Ish0}
\begin{aligned}
I^\sh=&\io J\p_3^N\TP^{8-2N} q \p_3^N\TP^{8-2N}(\diva v)+\io J\p_3^N\TP^{8-2N}\left(\frac12\left|J^{-1}\bp\eta\right|^2\right)\p_3^N\TP^{8-2N}(\diva v)\\
&+\io \left(-(\p_3^N\TP^{8-2N}\eta_p)\hat{A}^{lp}~\p_l Q+\Delta_Q^\sh\right)~\p_3^N\TP^{8-2N}(\diva v)-\io(\p_3^N\TP^{8-2N} Q) C^\sh(v)\\
=:&I_1^\sh+I_2^\sh+I_3^\sh+I_4^\sh,
\end{aligned}
\end{equation}where $I_4^\sh$ can be directly controlled by using the estimates of $C^\sh(v)$
\begin{equation}\label{Ish4}
I_4^\sh\lesssim\|\p_3^N\TP^{8-2N} Q\|_0\|C^\sh(v)\|_0\lesssim P(\|\eta\|_{8,*})\|\p_3^N\TP^{8-2N} Q\|_0\|v\|_{8,*}.
\end{equation}

The term $I_1^\sh$ produces the energy of fluid pressure
\begin{equation}\label{Ish1}
I_1^\sh\lesssim-\frac12\ddt\io\frac{J^2R'(q)}{\rho_0}\left|\p_3^N\TP^{8-2N} q\right|^2\dy+P(\|q\|_{8,*},\|\rho_0\|_{8,*},\|\eta\|_{8,*}).
\end{equation}

Similarly as in \eqref{I*2}, the term $I_2^\sh$ produces the cancellation with $K_{11}^\sh$.
{\small\begin{equation}\label{Ish2}
\begin{aligned}
I_2^\sh=&\underbrace{\io J\p_3^N\TP^{8-2N}\left(J^{-1}\bp\eta\right) \cdot\left(J^{-1}\bp\eta\right)\p_3^N\TP^{8-2N}(\diva v)}_{\text{exactly cancel with }K_{11}^\sh}\dy\\
&+\sum_{1\leq N_1+N_2=8}\binom{N}{N_1}\binom{8-2N}{N_2}\io J\p_3^{N_1}\TP^{N_2}\left(J^{-1}\bp\eta\right) \cdot\p_3^{N-N_1}\TP^{8-2N-N_2}\left(J^{-1}\bp\eta\right)\p_3^N\TP^{8-2N}(\diva v)\dy\\
=&-K_{11}^\sh\\
&-\sum_{1\leq N_1+N_2=8}\binom{N}{N_1}\binom{8-2N}{N_2}\io\frac{J^2R'(q)}{\rho_0}\p_3^{N_1}\TP^{N_2}\left(J^{-1}\bp\eta\right) \cdot\p_3^{N-N_1}\TP^{8-2N-N_2}\left(J^{-1}\bp\eta\right)(\p_3^N\TP^{8-2N}\p_t q)\dy\\
&-\sum_{1\leq N_1+N_2=8}\binom{N}{N_1}\binom{8-2N}{N_2}\io J\p_3^{N_1}\TP^{N_2}\left(J^{-1}\bp\eta\right) \cdot\p_3^{N-N_1}\TP^{8-2N-N_2}\left(J^{-1}\bp\eta\right)\left(\left[\TP^8,\frac{JR'(q)}{\rho_0}\right]\p_t q\right)\dy\\
=:&-K_{11}^\sh+I_{21}^\sh+I_{22}^\sh
\end{aligned}
\end{equation}}and by direct computation we have
\begin{align}
\label{Ish21} \int_0^T I_{21}^\sh\lesssim&\varepsilon\|\p_3^N\TP^{8-2N} q\|_0^2+\PP_0+\int_0^TP(\EE(t))\dt\\
\label{Ish22} I_{22}^\sh\lesssim&\|J^{-1}\bp\eta\|_{7,*}^2\|q\|_{8,*}.
\end{align}

Then $I_3^\sh$ can be controlled by integrating $\p_t$ by parts under time integral after invoking $\text{div}_Av=-\frac{JR'(q)}{\rho_0}\p_t q$. The proof is similar to \eqref{I*3} so we do not repeat the proof.
\begin{equation}\label{Ish3}
\begin{aligned}
\int_0^TI_3^\sh\lesssim\eps\|\TP^{8-2N}\p_3^N q\|_0^2+\PP_0+\int_0^T P(\EE(t))\dt.
\end{aligned}
\end{equation}

Summarizing \eqref{Ish0}-\eqref{Ish3} and choosing $\eps>0$ sufficiently small, we get the interior estimates
\begin{equation}\label{Ish}
\int_0^TI^\sh\dt\lesssim-\frac{1}{2}\io\frac{J^2R'(q)}{\rho_0}\left|\p_3^N\TP^{8-2N} q\right|^2\dy\bigg|^T_0+\PP_0+\int_0^T P(\EE(t))\dt.
\end{equation}Therefore, it suffices to analyze the boundary integral $IB^\sh$.

\subsubsection{Boundary estimates}\label{sect tnbdry}

Invoking \eqref{tngoodv}-\eqref{tngoodq}, the boundary integral now reads
\begin{equation}\label{IBshB}
\begin{aligned}
IB^\sh=-\ig \QQ^\sh JA^{3i}N_3\VV_i^\sh\dyy=&-\ig JA^{3i}N_3(\p_3^N\TP^{8-2N}Q)\VV_i^\sh\dyy\\
&+\ig \hat{A}^{3i}N_3(\p_3^N\TP^{8-2N}\eta_p)A^{3p}\p_3 Q~\p_3^N\TP^{8-2N} v_i\dyy\\
&-\ig \hat{A}^{3i}N_3(\p_3^N\TP^{8-2N}\eta_p~A^{3p}\p_3 Q)(\p_3^N\TP^{8-2N}\eta\cdot\pa v_i)\dyy\\
&-\ig \hat{A}^{3i}N_3(\Delta_Q^\sh)(\p_3^N\TP^{8-2N} v_i)\dyy+\ig \hat{A}^{3i}N_3(\Delta_Q^\sh)\p_3^N\TP^{8-2N}\eta\cdot\pa v_i\dyy\\
&-\ig  \hat{A}^{3i}N_3\Delta_Q^\sh(\Delta_v^\sh)_i\dyy+\ig \hat{A}^{3i}N_3(\p_3^N\TP^{8-2N}\eta_p~A^{3p}~\p_3 Q)(\Delta_v^\sh)_i\dyy\\
=:&IB_0^\sh+IB_1^\sh+IB_2^\sh+IB_3^\sh+IB_4^\sh+IB_5^\sh+IB_6^\sh.
\end{aligned}
\end{equation}

To control $IB^\sh$, we only need to combine the techniques used in Section \ref{sect n4bdry} and Section \ref{sect t8bdry}:
\begin{itemize}
\setlength{\itemsep}{0pt}
\setlength{\parsep}{0pt}
\setlength{\parskip}{0pt}
\item  $IB_1^\sh,IB_2^\sh$ together with the Rayleigh-Taylor sign condition produces the boundary energy $|A^{3i}\p_3^N\TP^{8-2N}\eta_i|_0^2$, similarly as $IB_1+IB_2$ in Section \ref{sect n4bdry} and $IB_1^*+IB_2^*$ Section \ref{sect t8bdry}.
\item  The term $IB_0^\sh$ is the analogue of $IB_0$ in Section \ref{sect n4bdry} and can be controlled with similar method as in Section \ref{sect n4bdry}.
\item  $IB_3^\sh\sim IB_6^\sh$ are the analogues of $IB_3^*\sim IB_6^*$ in Section \ref{sect t8bdry}. These terms can be controlled exactly in the same way as $IB_3^*\sim IB_6^*$.
\end{itemize}

First, $IB_1^\sh$ and $IB_2^\sh$ give the boundary energy with the help of Rayleigh-Taylor sign condition. The proof is exactly the same as in Section \ref{sect n4bdry} and Section \ref{sect t8bdry} by merely replacing $\p_3^4$ or $\TP^8$ with $\p_3^N\TP^{8-2N}$, so we do not repeat the computations here.
\begin{equation}\label{IBsh12}
\begin{aligned}
\int_0^TIB_1^\sh+IB_2^{\sh}=&-\frac{c_0}{4}\left|A^{3i}\p_3^N\TP^{8-2N}\eta_i\right|_0^2\bigg|^T_0 +\int_0^T P(\EE(t))\dt.
\end{aligned}
\end{equation}


We then analyze $IB_0^\sh$. Invoking \eqref{tngoodv}, we have
\begin{equation}\label{IBsh00}
\begin{aligned}
IB_0^\sh=&-\ig N_3(J\p_3^N\TP^{8-2N}Q)(A^{3i}~\p_3^N\TP^{8-2N} v_i)\dyy+\ig JA^{3i}N_3(\p_3^N\TP^{8-2N}Q)(\p_3^N\TP^{8-2N}\eta\cdot\pa v_i)\dyy\\
&-\ig JA^{3i}N_3(\p_3^N\TP^{8-2N}Q)(\Delta_v^\sh)_i\dyy\\
=:&IB_{01}^\sh+IB_{02}^\sh+IB_{03}^\sh.
\end{aligned}
\end{equation} We note that $IB_{01}^\sh$ and $IB_{02}^{\sh}$ are the analogues of $IB_{01}$ and $IB_{02}$ in Section \ref{sect n4bdry}, so we do not repeat all the details here. The extra term $IB_{03}^{\sh}$ can be directly controlled (cf. \eqref{tnbdry2} below).

We differentiate the continuity equation \eqref{vbdry} by $\p_3^N\TP^{8-2N}$ to simplify the top order term containing $v$ in $IB_{01}^{\sh}$:
\begin{equation}\label{tnvbdry}
\begin{aligned}
A^{3i}\p_3^N\TP^{8-2N}v_i=&-\p_3^{N-1}\TP^{8-2N}\left(\frac{JR'(q)}{\rho_0}\p_t q\right)-\sum_{L=1}^2\p_3^{N-1}\TP^{8-2N}(A^{Li}\TP_Lv_i)\\
&-\sum_{N_1+N_2\geq 1,N_1\leq N-1}\binom{N-1}{N_1}\binom{8-2N}{N_2}\left(\p_3^{N_1}\TP^{N_2}A^{3i}\right)\left(\p_3^{N-N_1}\TP^{8-2N-N_2}v_i\right),
\end{aligned}
\end{equation}
where the term contains $\p_3^{N-1}\TP^{8-2N}A^{Li}=\p_3^{N}\TP^{8-2N}\eta\times\TP\eta+$ controllable terms, where $\p_3^{N}\TP^{8-2N}\eta$ cannot be controlled on the boundary. Invoking \eqref{Da} with $D=\TP$, we expand this problematic term to be
\begin{equation}\label{tncancel1}
\begin{aligned}
(\p_3^{N-1}\TP^{8-2N}A^{Li})\TP_Lv_i=&-\left(\p_3^{N-1}\TP^{7-2N}(A^{Lp}~\TP\p_m\eta_p~A^{mi})\right)\TP_Lv_i\\
=&-A^{Lp}~\p_3^{N}\TP^{8-2N}\eta_p~A^{3i}\TP_Lv_i-\sum_{M=1}^2A^{Lp}(\p_3^{N-1}\TP^{8-2N}\TP_M\eta_p) A^{Mi}\TP_Lv_i-([\p_3^{N-1}\TP^{7-2N},A^{Lp}A^{mi}]\TP\p_m\eta_p)\TP_Lv_i.
\end{aligned}
\end{equation}

On the other hand, in $IB_{02}^\sh$ we have
\begin{equation}\label{tncancel2}
\begin{aligned}
A^{3i}\p_3^{N}\TP^{8-2N}\eta\cdot\pa v_i=A^{3i}\sum_{L=1}^2\p_3^{N}\TP^{8-2N}\eta_p A^{Lp}\TP_L v_i+A^{3i}\p_3^{N}\TP^{8-2N}\eta_p A^{3p}\p_3 v_i,
\end{aligned}
\end{equation}where the first term exactly cancels with the first term in the RHS of \eqref{tncancel1}. In fact, this is the analogue of \eqref{IB0121}-\eqref{IB022} by merely replacing $\p_3^4$ with $\p_3^N\TP^{8-2N}$. Thus we get the cancellation of the top order terms in $IB_{01}^\sh$ and $IB_{02}^\sh$.

The second term in \eqref{tncancel2} could be treated similarly as in \eqref{n4etabdry} by invoking $A^{3p}\p_3\eta_p=1$
\begin{equation}\label{tncancel3}
\begin{aligned}
\p_3^{N}\TP^{8-2N}\eta_p A^{3p}=&\underbrace{\p_3^{N-1}\TP^{8-2N}(\p_3\eta_p A^{3p})}_{=0}-(\p_3^{N-1}\TP^{8-2N}A^{3p})\p_3\eta_p-(\TP A^{3p})(\p_3^N\TP^{7-2N}\eta_p)+\text{lower order terms}.
\end{aligned}
\end{equation}



To control $IB_0^\sh$, we still need to analyze $\p_3^{N}\TP^{8-2N}Q$. Following the aruments in \eqref{qbdry}-\eqref{n4qbdry2} and replacing $\p_3^4$ with $\p_3^N\TP^{8-2N}$, we can reduce one normal derivative of $Q$ to one tangential derivative of $v$ and $\bp\eta$ via
\begin{equation}\label{tnqbdry2}
\begin{aligned}
\p_3^N\TP^{8-2N}Q=&J^{-1}\p_3\eta_i\left(\rho_0\p_3^{N-1}\TP^{8-2N}\p_t v^i+(b_0\cdot\TP)\p_3^{N-1}\TP^{8-2N}(J^{-1}\bp\eta^i)\right)-\sum_{L=1}^2\hat{A}^{Li}(\p_3^{N-1}\TP^{8-2N}\TP_L Q)\\
&-(N-1)(\p_3^{N-1}\TP^{8-2N}\hat{A}^{3i})(\p_3 Q)+\text{ lower order terms. }
\end{aligned}
\end{equation}
Plugging the expression of $\Delta_v^\sh$ and \eqref{tnvbdry}-\eqref{tnqbdry2} into \eqref{IBsh00}, we find that every highest order term in $IB_0^\sh$ must be one of the following forms
\begin{align*}
K_1^{\sh}:=&\ig N_3(\p_3^{N-1}\TP^{8-2N}\dd f)(\p_3^{N-1}\TP^{9-2N} g)( \p h) r \dyy,\\
K_2^{\sh}:=&\ig N_3(\p_3^{N-1}\TP^{8-2N}\dd f)(\p_3^{N}\TP^{7-2N} g)(\p\TP h) r \dyy,\\
K_3^{\sh}:=&\ig N_3(\p_3^{N-1}\TP^{8-2N}\dd f)(\p_3^{N}\TP^{7-2N} g)(\p h) r \dyy,
\end{align*} where $\dd=\TP$ or $\p_t$ or $(b_0\cdot\TP)$, the functions $f,g,h$ can be $\eta,v,Q,J^{-1}\bp\eta$, and $r$ contains at most one derivative of $\eta,~v$ or $Q$. We note that the term $K_2^\sh$ comes from $IB_{03}^\sh$ where $\Delta_v^\sh$ contributes to $\p_3^{N}\TP^{7-2N} g\cdot \p\TP h\cdot r$.

Since $1\leq N\leq 3$, we know $7-2N\geq 1$ and thus we can directly apply lemma \ref{trace} to control $K_1^\sh\sim K_3^\sh$.
\begin{equation}\label{tnbdry1}
\begin{aligned}
K_1^\sh\lesssim& |\p_3^{N-1}\dd f|_{8-2N}|\p_3^{N-1}g|_{9-2N}|\p h~r|_{L^{\infty}}\lesssim\|\p_3^{N-1}\dd f\|_{H_*^{9-2N}}\|\p_3^{N-1}g\|_{H_*^{10-2N}}\|\p h~r\|_{H^2}\\
\lesssim&\|f\|_{2(N-1)+1+(9-2N),*}\|g\|_{2(N-1)+(10-2N),*}\|h\|_3\|r\|_2=\|f\|_{8,*}\|g\|_{8,*}\|h\|_3\|r\|_2.
\end{aligned}
\end{equation}
and
\begin{equation}\label{tnbdry2}
\begin{aligned}
K_2^\sh\lesssim&|\p_3^{N-1}\dd f|_{8-2N}|\p_3^N g|_{7-2N}|(\p\TP h)r|_{L^{\infty}}\lesssim|\p_3^{N-1}\dd f|_{8-2N}|\p_3^N g|_{7-2N}|(\p\TP h)r|_{1.5}\\
\lesssim&\|\p_3^{N-1}\dd f\|_{H_*^{9-2N}}\|\p_3^N g\|_{H_*^{8-2N}}\|(\p\TP h)r\|_{2}\lesssim\|f\|_{8,*}\|g\|_{8,*}\|h\|_{7,*}\|r\|_{2},
\end{aligned}
\end{equation}
and
\begin{equation}\label{tnbdry3}
\begin{aligned}
K_3^\sh\lesssim&|\p_3^{N-1}\dd f|_{8-2N}|\p_3^N g|_{7-2N}|(\p h)r|_{L^{\infty}}\lesssim|\p_3^{N-1}\dd f|_{8-2N}|\p_3^N g|_{7-2N}|(\p h)r|_{1.5}\\
\lesssim&\|\p_3^{N-1}\dd f\|_{H_*^{9-2N}}\|\p_3^N g\|_{H_*^{8-2N}}\|(\p h)r\|_{2}\lesssim\|f\|_{8,*}\|g\|_{8,*}\|h\|_{3}\|r\|_{2}.
\end{aligned}
\end{equation}

One can use either trace lemma or similar techniques as in \eqref{n3bdry1}-\eqref{n3bdry22} to analyze the remaining terms which are all of lower order than $K_1^\sh\sim K_3^\sh$. This completes the control of $IB_0^\sh$.

The analysis of $IB_3^\sh\sim IB_6^\sh$ can be proceeded exactly in the same way as $IB_3^*\sim IB_6^*$. Since these quantities involving the modification terms $\Delta_Q^\sh,\Delta_v^\sh$ are of lower order, we do not repeat the details again. We can finally prove that
%
%
\begin{align}
\label{IBsh34} \int_0^T IB_3^\sh+IB_4^\sh\dt\lesssim&\int_0^T\left|A^{3i}\p_3^N\TP^{8-2N}\eta_i\right|_0 P\left(\|(\eta,v,b)\|_{8,*},\|Q\|_{8,*},\|\rho_0\|_3\right)\dt,\\ \nonumber
&+\left|A^{3i}\p_3^N\TP^{8-2N}\eta_i\right|_0P(\|\eta\|_{8,*},\|Q\|_{8,*})\int_0^T \|v(t)\|_{8,*}\dt\\
\label{IBsh5} IB_5^\sh\lesssim& |\hat{A}^{3i}|_{L^{\infty}}|\Delta_Q^\sh|_0|(\Delta_v^\sh)_i|_0\lesssim P(\|\eta\|_{8,*},\|v\|_{8,*},\|Q\|_{7,*}),\\
\label{IBsh6} IB_6^\sh\lesssim& |\hat{A}^{3i}\p_3 Q|_{L^{\infty}}|A^{3p}\p_3^N\TP^{8-2N}\eta_p|_0|(\Delta_v^\sh)_i|_0\lesssim P(\|\eta\|_{8,*},\|v\|_{8,*},\|Q\|_{7,*}).
\end{align}

Summarizing the estimates above, we get the control of the boundary integral
\begin{equation}\label{IBsh}
\int_0^T IB^\sh\lesssim -\frac{c_0}{4}\left|A^{3i}\p_3^N\TP^{8-2N}\eta_i\right|_0^2+\PP_0+ P(\EE(T))\int_0^T P(\EE(t))\dt.
\end{equation}

Combining \eqref{tngoodenergy0}-\eqref{tn1}, \eqref{Ish}, \eqref{IBsh} and choosing $\varepsilon>0$ in \eqref{Ish21} to be suitably small, we get the following inequality
\begin{equation}\label{tnenergy}
\|\VV^\sh\|_0^2+\left\|\p_3^N\TP^{8-2N}\left(J^{-1}\bp\eta\right)\right\|_0^2+\|\p_3^N\TP^{8-2N} q\|_0^2+\frac{c_0}{4}\left|A^{3i}\p_3^N\TP^{8-2N}\eta_i\right|_0^2\bigg|_{t=T}\lesssim \PP_0+ P(\EE(T))\int_0^T P(\EE(t))\dt.
\end{equation} Finally, invoking \eqref{tngoodv}, we get the $\p_3^N\TP^{8-2N}~(N=1,2,3)$-estimates of $v$
\begin{align*}
\|\p_3^N\TP^{8-2N}v\|_0\lesssim&\|\VV^\sh\|_0+\|\p_3^N\TP^{8-2N}\eta\|_0\|\pa v_i\|_{L^{\infty}}+\|\p_3^N\TP^{7-2N}\eta\|_0\left(\|\pa\TP v\|_{L^{\infty}}+\|\pa\TP \eta\cdot\pa v\|_{L^{\infty}}\right)+\|\p_3^N\TP^{7-2N}v\|_0\|\pa\TP\eta\|_{L^{\infty}}.
\end{align*}
Since $\p^m\eta|_{t=0}=0$ for any $m\geq 2, m\in \N^*$, we know
\begin{equation}\label{tngap}
\|\p_3^N\TP^{8-2N}v\|_0\lesssim\|\VV^\sh\|_0+P(\|v\|_{7,*},\|\eta\|_{7,*})\int_0^T P(\|v\|_{8,*}),
\end{equation}and thus
\begin{equation}\label{tn}
\|\p_3^N\TP^{8-2N} v\|_0^2+\left\|\p_3^N\TP^{8-2N}\left(J^{-1}\bp\eta\right)\right\|_0^2+\|\p_3^N\TP^{8-2N} q\|_0^2+\frac{c_0}{4}\left|A^{3i}\p_3^N\TP^{8-2N}\eta_i\right|_0^2\bigg|_{t=T}\lesssim \PP_0+ P(\EE(T))\int_0^T P(\EE(t))\dt.
\end{equation}

\subsection{Control of time derivatives}

In the case of $\p_*^I=\p_3^N\TP^{8-2N-k}\p_t^k$ for $1\leq k\leq 8-2N$, most of steps in the proof are still applicable. However, the presence of time derivative(s) could simplify the ``modified Alinhac good unknowns". We note that most of the modifications are essentially similar to Section \eqref{ts7} $\sim$ Section \ref{ts0}, so we no longer repeat the details.

\subsubsection{One time derivative}\label{sect tn7}

When $k=1$, the modified Alinhac good unknowns can be defined by replacing $8\TP^7$ by $(8-2N)\p_3^N\TP^{7-2N}$ in Section \ref{ts7AGU}, i.e.,
\begin{equation}\label{tn7goodeq}
\VV^\sh=\p_3^N\TP^{7-2N}\p_t v-\p_3^N\TP^{7-2N}\p_t\eta\cdot\pa v+\Delta_v^\sh,~~\QQ^\sh=\p_3^N\TP^{7-2N}\p_t Q-\p_3^N\TP^{7-2N}\p_t\eta\cdot\pa Q+\Delta_Q^\sh.,
\end{equation}where
\begin{equation}\label{tn7error}
\begin{aligned}
(\Delta_v^\sh)_i:=&-(8-2N)\p_3^N\TP^{7-2N}\eta\cdot\pa\p_t v_i-(8-2N)\p_3^N\TP^{7-2N}v\cdot\pa v_i+(16-4N)\p_3^N\TP^{7-2N}\cdot\pa v\cdot\pa v_i,\\
\Delta_Q^\sh:=&-(8-2N)\p_3^N\TP^{7-2N}\eta\cdot\pa\p_t Q+(8-2N)\p_3^N\TP^{7-2N}\eta\cdot\pa v\cdot\pa Q,
\end{aligned}
\end{equation}and
\begin{equation}\label{tn7goodgrad}
\p_3^N\TP^{7-2N}\p_t(\diva v)=\pa\cdot \VV^\sh+C^\sh(v),~~~\p_3^N\TP^{7-2N}\p_t(\pa Q)=\pa\QQ^\sh+ C^\sh(Q),
\end{equation}with
\[
\|C^\sh(f)\|_0\lesssim P(\EE(t))\|f\|_{8,*}.
\]
%

The difference between $\p_3^N\TP^{7-2N} v$ and $\VV^\sh$ should be controlled in the same way as \eqref{t7gap} by replacing $\TP^7$ with $\p_3^N\TP^{7-2N}$
\begin{equation}\label{tn7gap}
\|\p_3^N\TP^{7-2N}\p_tv\|_0\lesssim\|\VV^*\|_0^2+\PP_0+P(\EE(T))\int_0^T P(\EE(t))\dt,
\end{equation}and thus
\begin{equation}\label{tn7}
\begin{aligned}
&\|\p_3^N\TP^{7-2N}\p_t v\|_0^2+\left\|\p_3^N\TP^{7-2N}\p_t\left(J^{-1}\bp\eta\right)\right\|_0^2+\|\p_3^N\TP^{7-2N}\p_t q\|_0^2+\frac{c_0}{4}\left|A^{3i}\p_3^N\TP^{7-2N}\p_t\eta_i\right|_0^2\bigg|_{t=T}\\
\lesssim& \PP_0+ P(\EE(T))\int_0^T P(\EE(t))\dt.
\end{aligned}
\end{equation}

\subsubsection{2$\sim$(7-2N) time derivatives}\label{sect tn6}

When $2\leq k\leq 7-2N$, we can mimic the analysis in Section \ref{ts6}: We just need to replace $\dd^6\p_t^2$ by $\p_3^{N}\dd^{6-2N}\p_t^2$ where $\dd$ denotes $\TP$ or $\p_t$ and $\dd^{6-2N}$ contains at least one $\TP$. The analogous problematic term becomes $-2(\p_tA^{li})(\p_3^{N}\dd^{6-2N}\p_t\p_l f)-(6-2N)(\dd A^{li})(\p_3^{N}\dd^{5-2N}\p_t^2 \p_l f)$ which comes from $[\p_3^{N}\dd^{6-2N}\p_t^2,A^{li},\p_l f]$. Following \eqref{t6goodeq}-\eqref{t6goodgrad}, we can similarly define
\begin{equation}\label{tn6goodeq}
\VV^\sh=\p_3^{N}\dd^{6-2N}\p_t^2 v-\p_3^{N}\dd^{6-2N}\p_t^2\eta\cdot\pa v+\Delta_v^\sh,~~\QQ^\sh=\p_3^{N}\dd^{6-2N}\p_t^2 Q-\p_3^{N}\dd^{6-2N}\p_t^2\eta\cdot\pa Q,
\end{equation}where
\begin{equation}\label{tn6error}
(\Delta_v^\sh)_i:=-(6-2N)\p_3^{N}\dd^{5-2N}\p_t^2v\cdot\pa\dd\eta_i-2\p_3^{N}\dd^{6-2N}\p_tv\cdot\pa v_i
\end{equation}and
\begin{equation}\label{tn6goodgrad}
\p_3^{N}\dd^{6-2N}\p_t^2(\diva v)=\pa\cdot \VV^\sh+C^\sh(v),~~~\p_3^{N}\dd^{6-2N}\p_t^2(\pa Q)=\pa\QQ^*+C^\sh(Q),
\end{equation}with
\[
\|C^\sh(f)\|_0\lesssim P(\EE(t))\|f\|_{8,*}.
\]

Again we have $\Delta_Q^\sh$ in this case, and thus the analogues of $IB_3^\sh\sim IB_5^\sh$ all vanish. The boundary integrals $IB_0^\sh, IB_1^\sh,IB_2^\sh,IB_6^\sh$ and the interior terms can be controlled in the same way as Section \ref{sect tn}. Finally, one has
\begin{equation}\label{tn6}
\begin{aligned}
&\|\p_3^{N}\dd^{6-2N}\p_t^2 v\|_0^2+\left\|\p_3^{N}\dd^{6-2N}\p_t^2\left(J^{-1}\bp\eta\right)\right\|_0^2+\|\p_3^{N}\dd^{6-2N}\p_t^2 q\|_0^2+\frac{c_0}{4}\left|A^{3i}\p_3^{N}\dd^{6-2N}\p_t^2\eta_i\right|_0^2\bigg|_{t=T}\\
\lesssim& \PP_0+ P(\EE(T))\int_0^T P(\EE(t))\dt,
\end{aligned}
\end{equation}where $\dd^{6-2N}$ contains at least one spatial derivative $\TP$.

\subsubsection{Full time derivatives}\label{sect tn0}

When $\p_*^I=\p_3^N\p_t^{8-2N}$ for $N=1,2,3$, there is not tangential spatial derivative on the boundary and thus Lemma \ref{trace} is no longer applicable. In this case, the modified Alinhac good unknowns become
\begin{equation}\label{ts0goodeq}
\VV^\sh=\p_3^N\p_t^{8-2N} v-\p_3^N\p_t^{8-2N}\eta\cdot\pa v+\Delta_v^\sh,~~\QQ^\sh=\p_3^N\p_t^{8-2N}Q-\p_3^N\p_t^{8-2N}\eta\cdot\pa Q,
\end{equation}where
\begin{equation}\label{ts0error}
(\Delta_v^\sh)_i:=-(8-2N)\p_3^N\p_t^{8-2N}v\cdot\pa v_i
\end{equation}and
\begin{equation}\label{ts0goodgrad}
\p_3^N\p_t^{8-2N}(\diva v)=\pa\cdot \VV^\sh+C^\sh(v),~~~\p_3^N\p_t^{8-2N}(\pa Q)=\pa\QQ^\sh+C^\sh(Q),
\end{equation}with
\[
\|C^\sh(f)\|_0\lesssim P(\EE(t))\|f\|_{8,*}.
\]

The proof follows in the same way as Section \ref{ts0} after replacing $\p_t^7$ by $\p_t^{7-2N}$ and the coefficient 8 by $(8-2N)$. So we no longer repeat the details. Finally, we can get
\begin{equation}\label{tn0}
\begin{aligned}
&\|\p_3^N\p_t^{8-2N} v\|_0^2+\left\|\p_3^N\p_t^{8-2N} \left(J^{-1}\bp\eta\right)\right\|_0^2+\|\p_3^N\p_t^{8-2N} q\|_0^2+\frac{c_0}{4}\left|A^{3i}\p_3^N\p_t^{8-2N} \eta_i\right|_0^2\bigg|_{t=T}\\
\lesssim& \varepsilon\|\p_3^{N+1}\p_t^{6-2N} v\|_0^2+\PP_0+ P(\EE(T))\int_0^T P(\EE(t))\dt,
\end{aligned}
\end{equation}which together with \eqref{tn}, \eqref{tn7}, \eqref{tn6} concludes the proof of Proposition \ref{prop mixed}.

\section{Control of weighted normal derivatives}\label{normal8}

Now we consider the most general case $\p_*^I=\p_t^{i_0}(\sigma\p_3)^{i_4}\TP_1^{i_1}\TP_2^{i_2}\p_3^{i_3}$ with $i_1+i_2+2i_3+i_4=8$ and $i_4>0$. The presence of the weighted normal derivatives $(\sigma\p_3)^{i_4}$ makes the following difference from the non-weighted case.
\begin{enumerate}
\setlength{\itemsep}{0pt}
\setlength{\parsep}{0pt}
\setlength{\parskip}{0pt}
\item Extra terms are produced when we commute $\p_*^I$ with $\p_3$ because $\sigma$ is a function of $y_3$. Once $\p_3$ falls on the weight function, we will lose a weight and $(\sigma\p_3)$ becomes $\p_3$, which causes a loss of derivative. This appears when we commute $\p_*^I$ with $\pa^i$ that falls on $Q$ or $v_i$ and commute $\p_*^I$ with $\bp$.
\item There is no boundary integral because the weight function $\sigma$ vanishes on $\Gamma$.
\end{enumerate}

To overcome the difficulty mentioned above, we can again use the techniques, similar with those in the previous sections.
\begin{itemize}
\setlength{\itemsep}{0pt}
\setlength{\parsep}{0pt}
\setlength{\parskip}{0pt}
\item Invoke the MHD equation and the continuity equation to replace $\pa Q$ and $\diva v$ by tangential derivatives.
\item Produce a weight funtion by using $b_0^3|_{\Gamma}=0$ and $\TP Q|_{\Gamma}=0$.
\item In particular, if $\p_*^I$ does not contain time derivative, we need to add an extra modification term in the good unknown of $v$.
\end{itemize}

First we analyze $[\bp,\p_t^{i_0}(\sigma\p_3)^{i_4}\TP_1^{i_1}\TP_2^{i_2}\p_3^{i_3}] f$. Compared with the non-weighted case, we need to control the extra term $b_0^3\p_3(\sigma^{i_4})~(\p_t^{i_0}\TP_1^{i_1}\TP_2^{i_2}\p_3^{i_3+i_4} f)=i_4b_0^3(\p_3\sigma)\left(\p_t^{i_0}(\sigma\p_3)^{i_4-1}\TP_1^{i_1}\TP_2^{i_2}\p_3^{i_3+1}\right) f$. Using $b_0^3|_{\Gamma}=0$, one can produce a weight function $\sigma$ as in \eqref{t8C22}. Therefore
\[
\left\|b_0^3(\p_3\sigma)\left(\p_t^{i_0}(\sigma\p_3)^{i_4-1}\TP_1^{i_1}\TP_2^{i_2}\p_3^{i_3+1} f\right)\right\|_0\lesssim\|\p_3 b_0\|_{L^{\infty}}\|(\sigma\p_3)\p_t^{i_0}(\sigma\p_3)^{i_4-1}\TP_1^{i_1}\TP_2^{i_2}\p_3^{i_3} f\|_0\leq\|b_0\|_3\|f\|_{8,*}.
\]

Next we analyze the commutator between $\p_*^I=\p_t^{i_0}(\sigma\p_3)^{i_4}\TP_1^{i_1}\TP_2^{i_2}\p_3^{i_3}$ and $\pa f$. Compared with the non-weighted case, we shall analyze an extra term $C_{\sigma}$ below. In specific, one has
\begin{equation}\label{wbad1}
\begin{aligned}
&\p_t^{i_0}(\sigma\p_3)^{i_4}\TP_1^{i_1}\TP_2^{i_2}\p_3^{i_3}(A^{li}\p_l f)\\
=&\sigma^{i_4}\p_t^{i_0}\TP_1^{i_1}\TP_2^{i_2}\p_3^{i_3+i_4}(A^{li}\p_l f)\\
=&\sigma^{i_4}\left(A^{li}\p_l(\p_t^{i_0}\TP_1^{i_1}\TP_2^{i_2}\p_3^{i_3+i_4}) f\right)+\underbrace{\sigma^{i_4}[\p_t^{i_0}\TP_1^{i_1}\TP_2^{i_2}\p_3^{i_3+i_4},A^{li}]\p_lf}_{\mathring{C}}\\
=&A^{li}\p_l\left(\sigma^{i_4}\p_t^{i_0}\TP_1^{i_1}\TP_2^{i_2}\p_3^{i_3+i_4}f\right)-(i_4\p_3\sigma)\underbrace{A^{3i}\left((\sigma\p_3)^{i_4-1}\p_t^{i_0}\TP_1^{i_1}\TP_2^{i_2}\p_3^{i_3+1}f\right)}_{C_{\sigma}}+\mathring{C}.
\end{aligned}
\end{equation}

The term $\mathring{C}$ consists of the commutators produced in the same way as the non-weighted case. It can be analyzed in the same way as in previous sections by just considering $(\sigma\p_3)$ as a tangential derivative on the boundary. As for the extra term, we do the following computation
\begin{equation}\label{w1}
\begin{aligned}
&A^{3i}\left((\sigma\p_3)^{i_4-1}\p_t^{i_0}\TP_1^{i_1}\TP_2^{i_2}\p_3^{i_3+1}f\right)\\
=&(\sigma\p_3)^{i_4-1}\p_t^{i_0}\TP_1^{i_1}\TP_2^{i_2}\p_3^{i_3}(A^{3i}\p_3 f)-\left[(\sigma\p_3)^{i_4-1}\p_t^{i_0}\TP_1^{i_1}\TP_2^{i_2}\p_3^{i_3},A^{3i}\right]\p_3 f\\
=:&C^{\sigma}_1(f)+C^{\sigma}_2(f).
\end{aligned}
\end{equation}

Note that $i_0+i_1+i_2+i_4=8-2i_3$. We know the leading order terms in $C^{\sigma}_2$ are $\left((\sigma\p_3)^{i_4-1}\p_t^{i_0}\TP_1^{i_1}\TP_2^{i_2}\p_3^{i_3}A^{3i}\right)f$ and $(\dd A^{3i})(\dd^{6-2i_3}\p_3^{i_3+1} f)$, where $\dd$ represents any one of  $(\sigma\p_3),\p_t,\TP_1,\TP_2$. Recall that $A^{3i}$ cosists of $\TP\eta\cdot\TP\eta$. This shows that the highest order term in $\left((\sigma\p_3)^{i_4-1}\p_t^{i_0}\TP_1^{i_1}\TP_2^{i_2}\p_3^{i_3}A^{3i}\right)\p_3f$ is $(\dd^{8-2i_3}\p_3^{i_3}\eta)(\TP\eta) f$ whose $L^2(\Omega)$ norm can be directly controlled by $\|\eta\|_{8,*}\|\TP\eta\|_{L^{\infty}}\|\p_3 f\|_{L^{\infty}}$. As for the second term, we have
\[
\|(\dd A^{3i})(\dd^{6-2i_3}\p_3^{i_3+1} f)\|_0\lesssim\|(\dd\TP\eta)(\TP\eta)\|_{L^{\infty}}\|f\|_{8,*}.
\]Therefore, $C^{\sigma}_{2}$ can be directly controlled.

The control of $C^{\sigma}_1$ is more complicated. We should use the structure of MHD system \eqref{CMHDL} to replace one normal derivative by one tangential derivative.
\begin{align}
\label{qn}A^{3i}\p_3 Q=&-\sum_{L=1}^2A^{Li}\TP_L Q-R\p_t v^i+J^{-1}\bp(J^{-1}\bp\eta)\\
\label{vn}A^{3i}\p_3v_i=&\diva v-A^{1i}\TP_1 v_i-A^{2i}\TP_2v_i=-\frac{JR'(q)}{\rho_0}\p_t q-\sum_{L=1}^2A^{Li}\TP_L v_i
\end{align}

When $f=Q$, we plug \eqref{qn} into $C^{\sigma}_1(Q)$ to get
\begin{equation}\label{qbad1}
\begin{aligned}
C^{\sigma}_1(Q)=&(\sigma\p_3)^{i_4-1}\p_t^{i_0}\TP_1^{i_1}\TP_2^{i_2}\p_3^{i_3}(A^{3i}\p_3 Q)\\
=&-\sum_{L=1}^2(\sigma\p_3)^{i_4-1}\p_t^{i_0}\TP_1^{i_1}\TP_2^{i_2}\p_3^{i_3}(A^{Li}\TP_L Q)\\
&-(\sigma\p_3)^{i_4-1}\p_t^{i_0}\TP_1^{i_1}\TP_2^{i_2}\p_3^{i_3}(R\p_t v^i)+(\sigma\p_3)^{i_4-1}\p_t^{i_0}\TP_1^{i_1}\TP_2^{i_2}\p_3^{i_3}\left(J^{-1}\bp(J^{-1}\bp\eta))\right)\\
=:&C_{11}^{\sigma}(Q)+C_{12}^{\sigma}(Q)+C_{13}^{\sigma}(Q).
\end{aligned}
\end{equation}

When $f=v_i$, we plug \eqref{vn} into $C^{\sigma}_1(v)$ to get
\begin{equation}\label{vbad1}
\begin{aligned}
C^{\sigma}_1(v)=&(\sigma\p_3)^{i_4-1}\p_t^{i_0}\TP_1^{i_1}\TP_2^{i_2}\p_3^{i_3}(A^{3i}\p_3 v_i)\\
=&-\sum_{L=1}^2(\sigma\p_3)^{i_4-1}\p_t^{i_0}\TP_1^{i_1}\TP_2^{i_2}\p_3^{i_3}(A^{Li}\TP_L v_i)-(\sigma\p_3)^{i_4-1}\p_t^{i_0}\TP_1^{i_1}\TP_2^{i_2}\p_3^{i_3}\left(\frac{JR'(q)}{\rho_0}\p_t q\right)\\
=:&C_{11}^{\sigma}(v)+C_{12}^{\sigma}(v).
\end{aligned}
\end{equation}

The terms $C_{12}^{\sigma}(Q)$ and $C_{12}^{\sigma}(v)$ can be directly controlled. Note that $i_0+i_1+i_2+(i_4-1)=7-2i_3$, so
\begin{align}
\label{qbad12} \|C_{12}^{\sigma}(Q)\|_0\lesssim& \|R\|_{7,*}\|v\|_{8,*}\lesssim \|q\|_{7,*}\|v\|_{8,*},\\
\label{vbad12} \|C_{12}^{\sigma}(v)\|_0\lesssim& \|\rho_0\|_{7,*}\|q\|_{8,*}.
\end{align}

 Using $b_0^3|_{\Gamma}=0$, one can produce a weight function $\sigma$ as in \eqref{t8C22} when all the derivatives fall on $J^{-1}\bp\eta$.
\begin{equation}\label{qbad13}
\begin{aligned}
\|C_{13}^{\sigma}(Q)\|_0\lesssim&\|J^{-1}\bp(\sigma\p_3)^{i_4-1}\p_t^{i_0}\TP_1^{i_1}\TP_2^{i_2}\p_3^{i_3}(J^{-1}\bp\eta)\|_0\\
&+\left\|\left[(\sigma\p_3)^{i_4-1}\p_t^{i_0}\TP_1^{i_1}\TP_2^{i_2}\p_3^{i_3},J^{-1}\bp\right](J^{-1}\bp\eta)\right\|_0\\
\lesssim&\|\p_3 (J^{-1}b_0)\|_{L^{\infty}}\|(\sigma\p_3)^{i_4}\p_t^{i_0}\TP_1^{i_1}\TP_2^{i_2}\p_3^{i_3}(J^{-1}\bp\eta)\|_0\\
\lesssim&\|b_0\|_{7,*}\|J^{-1}\bp\eta\|_{8,*}.
\end{aligned}
\end{equation}

As for $C_{11}^{\sigma}$, the highest order term can be merged into the modified Alinhac good unknowns. One has
\begin{equation}\label{wbad111}
\begin{aligned}
C_{11}^{\sigma}(f):=&-\sum_{L=1}^2(\sigma\p_3)^{i_4-1}\p_t^{i_0}\TP_1^{i_1}\TP_2^{i_2}\p_3^{i_3}(A^{Li}\TP_L f)\\
=&-\sum_{L=1}^2\left((\sigma\p_3)^{i_4-1}\p_t^{i_0}\TP_1^{i_1}\TP_2^{i_2}\p_3^{i_3}A^{Li}\right)\TP_L f \underbrace{-\sum_{L=1}^2\left[(\sigma\p_3)^{i_4-1}\p_t^{i_0}\TP_1^{i_1}\TP_2^{i_2}\p_3^{i_3},A^{Li}\right]\TP_L f}_{C_{111}^{\sigma}(f)}\\
=&-\sum_{L=1}^2A^{Lp}\left((\sigma\p_3)^{i_4-1}\p_t^{i_0}\TP_1^{i_1}\TP_2^{i_2}\p_3^{i_3}\p_k \eta_p\right) A^{ki}\TP_L f\underbrace{-\sum_{L=1}^2\left(\left[(\sigma\p_3)^{i_4-1}\p_t^{i_0}\TP_1^{i_1}\TP_2^{i_2}\p_3^{i_3},A^{Lp}A^{ki}\right]\p_k \eta_p\right) \TP_L f}_{C_{112}^{\sigma}(f)} +C_{111}^{\sigma}(f).
\end{aligned}
\end{equation}Since $i_0+i_1+i_2+i_4=8-2i_3$, one can directly control the $L^2(\Omega)$-norms of $C_{111}^{\sigma}(f),~C_{112}^{\sigma}(f)$ by $P(\|\eta\|_{8,*})\|f\|_{8,*}$. For the first term in the RHS of \eqref{wbad111}, one can proceed in the following ways
\begin{itemize}
\setlength{\itemsep}{0pt}
\setlength{\parsep}{0pt}
\setlength{\parskip}{0pt}
\item $f=Q$: Since $\TP_L Q|_{\Gamma}=0$, one can produce a weight function as in \eqref{t8C4q} and thus
\begin{equation}
\begin{aligned}
&\left\|A^{Lp}\left((\sigma\p_3)^{i_4-1}\p_t^{i_0}\TP_1^{i_1}\TP_2^{i_2}\p_3^{i_3}\p_k \eta_p\right) A^{ki}\TP_L Q\right\|_0\\
\lesssim& \sum_{M=1}^2\left\|A^{Lp}\left((\sigma\p_3)^{i_4-1}\p_t^{i_0}\TP_1^{i_1}\TP_2^{i_2}\p_3^{i_3}\TP_M\eta_p\right) A^{Mi}\TP_L Q\right\|_0+\|A^{Lp}A^{3i}\TP\p_3 Q\|_{L^{\infty}}\|(\sigma\p_3)^{i_4}\p_t^{i_0}\TP_1^{i_1}\TP_2^{i_2}\p_3^{i_3}\eta_p\|_0\\
\lesssim& P(\|\eta\|_{3})\|Q\|_{7,*}\|\eta\|_{8,*}.
\end{aligned}
\end{equation}
\item $f=v_i$: When $\p_*^I$ contains time derivative $(i_0>0)$, then it can be directly controlled due to $\p_t \eta=v$
\begin{equation}
\left\|A^{Lp}\left((\sigma\p_3)^{i_4-1}\p_t^{i_0}\TP_1^{i_1}\TP_2^{i_2}\p_3^{i_3}\p_k \eta_p\right) A^{ki}\TP_L v_i\right\|_0=\left\|A^{Lp}\left((\sigma\p_3)^{i_4-1}\p_t^{i_0-1}\TP_1^{i_1}\TP_2^{i_2}\p_3^{i_3}\p_k v_p\right) A^{ki}\TP_L v_i\right\|_0\lesssim P(\|\eta\|_{3})\|v\|_{5,*}\|v\|_{8,*}.
\end{equation}
If $i_0=0$, then it can be written in the form of covariant derivative plus a controllable term.
\begin{equation}\label{wbad112}
\begin{aligned}
&-A^{Lp}\left((\sigma\p_3)^{i_4-1}\TP_1^{i_1}\TP_2^{i_2}\p_3^{i_3}\p_k \eta_p\right) A^{ki}\TP_L v_i\\
=&-A^{ki}\p_k\left((\sigma\p_3)^{i_4-1}\TP_1^{i_1}\TP_2^{i_2}\p_3^{i_3}\eta_p A^{Lp}\TP_L v_i\right)\\
&+\underbrace{A^{3i}(\p_3\sigma)\left((i_4-1)(\sigma\p_3)^{i_4-2}\TP_1^{i_1}\TP_2^{i_2}\p_3^{i_3+1}\eta_p \right)A^{Lp}\TP_L v_i+\nabla_A^i(A^{Lp}\TP_L v_i)\left((\sigma\p_3)^{i_4-1}\TP_1^{i_1}\TP_2^{i_2}\p_3^{i_3}\eta_p\right) }_{C_{113}^{\sigma}(v_i)}\\
=&-\pa^i\left((\sigma\p_3)^{i_4-1}\TP_1^{i_1}\TP_2^{i_2}\p_3^{i_3}\eta_p A^{Lp}\TP_L v_i\right)+C_{113}^{\sigma}(v_i).
\end{aligned}
\end{equation}We note that the first term in $C_{113}^{\sigma}(f)$ appears when $\p_k~(k=3)$ falls on the weight function and $i_4\geq 2$ and can also be directly controlled by $P(\|\eta\|_{8,*})\|f\|_{8,*}$.
\end{itemize}

 Next we merge the covariant derivative terms in $C_{\sigma}$ into the modified Alinhac good unknowns, i.e., for $\p_*^I=\p_t^{i_0}(\sigma\p_3)^{i_4}\TP_1^{i_1}\TP_2^{i_2}\p_3^{i_3}$ we define
\begin{equation}\label{wvgood}
\VV^{\sigma}_i:=
\begin{cases}
\p_*^I v_i-\p_*^I\eta\cdot\pa v_i+(\Delta_v^{\sigma})_i~~~&i_0\geq 1\\
\p_*^I v_i-\p_*^I\eta\cdot\pa v_i+(\Delta_v^{\sigma})_i+\sum\limits_{L=1}^2\left((i_4\p_3\sigma)(\sigma\p_3)^{i_4-1}\TP_1^{i_1}\TP_2^{i_2}\p_3^{i_3}\eta_p\right) A^{Lp}\TP_L v_i, ~~~&i_0=0,
\end{cases}
\end{equation} and
\begin{equation}\label{wqgood}
\QQ^{\sigma}:=\p_*^I Q-\p_*^I\eta\cdot\pa Q+\Delta_Q^{\sigma}.
\end{equation}
Then one has
\begin{align}
\label{wvgoodgrad} \p_*^I(\pa\cdot v)=\pa\cdot\VV^{\sigma}+C^{\sigma}(v),\\
\label{wvgoodgrad} \p_*^I(\pa Q)=\pa\QQ^{\sigma}+C^{\sigma}(Q),
\end{align}with $\|C^{\sigma}(f)\|_0\lesssim P(\EE(t))\|f\|_{8,*}.$ Here the ``extra modification terms" $\Delta_v^{\sigma}$ and $\Delta_Q^{\sigma}$ comes from the analysis of $\mathring{C}$ in \eqref{wbad1} whose precise expressions can be derived in the same way as Section \ref{sect t8} $\sim$ Section \ref{mixed}. The term $\left((i_4\p_3\sigma)(\sigma\p_3)^{i_4-1}\p_t^{i_0}\TP_1^{i_1}\TP_2^{i_2}\p_3^{i_3}\eta_p\right) A^{Lp}\TP_L f$ comes from \eqref{wbad1} and \eqref{wbad112}. Finally, the commutator $C^{\sigma}(f)$ consists of the commutator part in $\mathring{C}$, $C_{111}^{\sigma}(f)\sim C_{113}^{\sigma}(f), C_{12}^{\sigma}(f)$ and $C_{13}^{\sigma}(Q).$

Recall that $\sigma|_{\Gamma}=0$ and $\TP Q|_{\Gamma}=0$ imply $\QQ^{\sigma}|_{\Gamma}=0$. Therefore the boundary integral $\ig N_3\hat{A}^{3i}\QQ^{\sigma}\VV_i^{\sigma}\dyy$ vanishes. Hence, we can get the following estimates for $\p_*^I:=\p_t^{i_0}(\sigma\p_3)^{i_4}\TP_1^{i_1}\TP_2^{i_2}\p_3^{i_3}$
\begin{align}
\label{wmix}\|\p_*^I v\|_0^2+\left\|\p_*^I\left(J^{-1}\bp\eta\right)\right\|_0^2+\|\p_*^I q\|_0^2\bigg|_{t=T}\lesssim \PP_0+P(\EE(T))\int_0^T P(\EE(t))\dt.
\end{align}

\section{A priori estimates, uniqueness and continuous dependence on data}\label{apriori}

\subsection{Finalizing the a priori energy estimates}

Combining the $L^2$-energy conservation \eqref{conserve} with \eqref{n4n4}, \eqref{t8t8}, \eqref{tntn} and \eqref{wmix}, and then choosing $\eps>0$ to be suitably small, we finally get the following energy inequality
\begin{equation}\label{EE1}
\EE(T)-\EE(0)\lesssim \PP_0+P(\EE(T))\int_0^T P(\EE(t))\dt
\end{equation}under the a priori asuumptions \eqref{small}-\eqref{sign2}. By the Gronwall-type inequality, one can find some $T_1>0$ depending only on the initial data, such that
\begin{equation}\label{EEE}
\sup_{0\leq t\leq T_1}\EE(t)\leq P(\EE(0)).
\end{equation}This completes the a priori estimates of \eqref{CMHDL}.

\subsection{Justification of the a priori assumptions}

It suffices to justify the a priori assumptions \eqref{small}-\eqref{sign2}. First, invoking $\p_t J=J\diva v$ and $J|_{t=0}=1$, we get
\[
\|J-1\|_{7,*}\leq\int_0^T \|J\diva v\|_{7,*}\dt\lesssim\int_0^T P(\|\p\eta\|_{L^{\infty}})\|\p_t q\|_{7,*}\dt\leq \int_0^TP(\|\p\eta\|_{L^{\infty}})\|q\|_{8,*}\dt.
\]Therefore choosing $T>0$ to be sufficiently small yields \eqref{small}. The Rayleigh-Taylor sign condition in $[0,T_1]$ can be justified by proving $\p Q/\p N$ is a H\"older-continuous function in $t$ and $y$ variables. In specific, from the energy estimates we know that
$$\frac{\p Q}{\p N}\in L^{\infty}([0,T];H^{\frac52}(\Gamma)),~\p_t\left(\dfrac{\p Q}{\p N}\right)\in L^{\infty}([0,T];H^{\frac32}(\Gamma)).$$ By using the 2D Sobolev embedding $H^{\frac12}(\Gamma)\hookrightarrow L^4(\Gamma)$ and Morrey's embedding $W^{1,4}\hookrightarrow C^{0,\frac14}$ in 3D domain, we get the H\"older continuity of the Rayleigh-Taylor sign
$$\frac{\p Q}{\p N}\in W^{1,\infty}([0,T];H^{\frac32}(\Gamma))\hookrightarrow W^{1,\infty}([0,T];W^{1,4}(\Gamma))\hookrightarrow W^{1,4}([0,T]\times\Gamma)\hookrightarrow C_{t,x}^{0,\frac14}([0,T]\times\Gamma).$$ Therefore, \eqref{sign2} holds in a positive time interval provided that $-\dfrac{\p Q_0}{\p N}\geq c_0>0$ holds initially. Theorem \ref{CMHDEE} is proved.

\subsection{Control of initial energy by the initial data satisfying the compatibility conditions}\label{data}
Finally we need to show $\EE(0)<\infty$. Define $f_{(j)}:=\p_t^jf|_{t=0}$ to be the initial data of $\p_t^j f$ for $j\in\N$. We know the initial data should satisfy the following properties:
\begin{itemize}
\setlength{\itemsep}{0pt}
\setlength{\parsep}{0pt}
\setlength{\parskip}{0pt}
\item The compatibility conditions \eqref{cck} up to 7-th order.
\item The constraints $\nabla\cdot B_0=0,~B_0\cdot n|_{\{0\}\times\p\DD_0}=0$ and the Rayleigh-Taylor sign condition \eqref{sign} on $\{0\}\times\p\DD_0$.
\item The norms of the initial datum of the time derivatives of $(v,b,Q)$ can be controlled by the norms of initial data $(v_0,b_0,Q_0)$.
\end{itemize}

We note that the compatibility conditions up to order $m$ can be expressed in Lagrangian coordinates by using the formal power series solution to \eqref{CMHDL} in $t$:
\[
\hat{v}(t,y)=\sum v_{(j)}(y)\frac{t^j}{j!},~\hat{b}(t,y)=\sum b_{(j)}(y)\frac{t^j}{j!},\hat{Q}(t,y)=\sum Q_{(j)}(y)\frac{t^j}{j!},
\] satisfying $Q_{(j)}|_{\Gamma}=0$ for $j=0,1,\cdots,m$. Since we study the solutions in (anisotropic) Sobolev spaces, such compatibility conditions have to be expressed in a weak form
\begin{equation}\label{ccd}
Q_{(j)}(y)\in H_0^1(\Omega),~~0\leq j\leq m.
\end{equation}

From $(v_0,b_0,Q_0)\in H_*^8(\Omega)$ and the system \eqref{CMHDL}, one can only get $(v_{(j)},b_{(j)},Q_{(j)})\in H_*^{8-2j}(\Omega)$ for $0\leq j\leq 4$. To guarantee $(v_{(j)},b_{(j)},Q_{(j)})\in H_*^{8-j}(\Omega)$ and $Q_{(j)}\in H_0^1(\Omega)$, the initial data should be constructed in standard Sobolev space $H^8(\Omega)$ with $$\sum_{j=1}^8\|(v_{(j)},b_{(j)},Q_{(j)})\|_{8-j}^2\lesssim P(\|v_0\|_8,\|b_0\|_8,\|Q_0\|_8).$$ See the construction in \cite[Lemma 4.1]{TW2020MHDLWP}.

On the one hand, by Lemma \ref{embedding} we know $(v_{(j)},b_{(j)},Q_{(j)})\in H^{8-j}(\Omega)\hookrightarrow H_*^{8-j}(\Omega)$ which satisfies our requirement and implies $\EE(0)\lesssim P(\|v_0\|_8,\|b_0\|_8,\|Q_0\|_8)$. On the other hand, if we directly construct the initial data $(v_0,b_0,q_0)\in H_*^8(\Omega)$ such that $(v_{(j)},b_{(j)},Q_{(j)})\in H_*^{8-j}(\Omega)$, then it is not clear in which sense the boundary conditions and the compatibility conditions are satisfied. For example, $Q_{(7)}\in H_*^{1}(\Omega)$, but the trace of such function in that space has no meaning in general. This also explains why we require $Q_{(7)}\in H_0^1(\Omega)$ in \eqref{ccd}. See also Secchi \cite[Theorem 2.1]{Secchi1996}. Therefore, the initial data $(v_0,b_0,Q_0)$ should be constructed in the standard Sobolev space $H^8(\Omega)$.

\subsection{Continuous dependence on initial data and uniqueness}\label{continuity}
Now we prove Theorem \ref{CMHDEE2} by using a similar argument as in the proof of a priori bounds. Assume $U_0^{(i)}=(v_0^{(i)},b_0^{(i)},q_0^{(i)})\in H_*^8(\Omega)~(i=1,2)$ to be two initial datum of \eqref{CMHDL} satisfying the hypothesis of Theorem \ref{CMHDEE}. Suppose also $U^{(i)}(t,\cdot)=(\eta^{(i)}(t,\cdot),v^{(i)}(t,\cdot),b^{(i)}(t,\cdot),q^{(i)}(t,\cdot))~(i=1,2)$ to be the solutions to \eqref{CMHDL} with initial data $U_0^{(i)}$. Then we derive the system of $([\eta],[v],[q])$ as follows, where $[f]:=f^{(1)}-f^{(2)}$ for any function $f$.
\begin{equation}\label{CMHDLgap}
\begin{cases}
\p_t[\eta]=[v]~~~&\text{ in }\Omega, \\
R^{(1)}\p_t[v]-{J^{(1)}}^{-1}b_0^{(1)}\cdot\p\left({J^{(1)}}^{-1}b_0^{(1)}\cdot\p[\eta]\right)+\nabla_{A^{(1)}}[Q]+\nabla_{[A]}Q^{(2)}=f_{v}+f_{b},~~~&\text{ in }\Omega, \\
\ff'(q^{(1)})\p_t[q]+\dive_{A^1}[v]+\dive_{[A]}v^{(2)}=f_q~~~&\text{ in }\Omega, \\
[Q]=0~~~&\text{ on }\Gamma,\\
([\eta], [v], [b], [q])|_{t=0} = (\mathbf{0},[v_0],[b_0],[q_0]),
\end{cases}
\end{equation}with the initial constraints (divergence-free condition for $b_0$, $b_0^3|_{\Gamma}=0$ and the Rayleigh-Taylor sign condition) for each $i=1,2$. Here $Q^{(i)}:=q^{(i)}+\frac12 |b^{(i)}|^2$, $b^{(i)}={J^{(i)}}^{-1}b_0^{(i)}\cdot\p\eta^{(i)}$, and $\ff(q^{(i)}):=\log R^{(i)}(q^{(i)})$. The source terms $f_{v,b}$ and $f_q$ are defined by
\begin{align}
f_{v}:=&-[R]\p_tv^{(2)}+[b_0]\cdot\p({J^{(2)}}^{-1}b_0^{(2)}\cdot\p\eta^{(2)}),~f_b:=b_0^{(1)}\cdot\p([J^{-1}b_0]\cdot\p\eta^{(2)}),\\
f_q:=&-(\ff'(q^{(1)})-\ff'(q^{(2)}))\p_t q^{(2)}.
\end{align}

Note that system \eqref{CMHDLgap} has the same structure as \eqref{CMHDL} on the left side of each equation. And it is not difficult to see that the $\|\cdot\|_{6,*}$ norm of both source terms can be directly controlled, because each solution $U^{(i)}$ are bounded in $\|\cdot\|_{8,*}$ norm. In particular, the second term and the fourth term in $f_{b,v}$ can be controlled by writing the $[\cdot]$ terms back to the form $f^{(1)}-f^{(2)}$ and using the bounds for each $U^{(i)}$. Therefore, the estimates for $([\eta],[v],[b],[q])$ in $\|\cdot\|_{6,*}$ norm should follow in a similar way as in the proof of Theorem \ref{CMHDEE}. It is even easier because we no longer need to design the ``modified good unknowns" when taking $\p_*^I$ with $\lee I\ree = 6$. Indeed, given the derivative $\p_*^I$ with $\lee I\ree=6$, we define $\FF^{(i)}:=\p_*^I f^{(i)}-\p_*^I\eta^{(i)}\cdot\nabla_{A^{(i)}} f^{(i)},~i=1,2,$ to be the Alinhac good unknowns for $f^{(i)}$ with respect to $\p_*^I$. Then define $[\FF]:=\FF^{(1)}-\FF^{(2)}$ and we have
\begin{align}
[\FF]=&~\p_*^I [f]-\p_*^I\eta^{(1)}\cdot\nabla_{A^{(1)}} [f]-\p_*^I\eta^{(1)}\cdot\nabla_{[A]}f^{(2)}-\p_*^I[\eta]\cdot\nabla_{A^{(2)}} f^{(2)},\\
\p_*^I(\nabla_{A^{(1)}}[f]+\nabla_{[A]}f^{(2)})=&~\nabla_{A^{(1)}}[\FF]+C^{(1)}([f]),\\
\|[\FF]-\p_*^I [f]\|_0+&~\|\p_t([\FF]-\p_*^I[f])\|_0+\|C^{(1)}([f])\|_0\leq P(\EE^{(1)}(t),\EE^{(2)}(t))[\EE](t).
\end{align}We also obtain the evolution equation for the good unknown $[\VV]$
\begin{align}\label{VVgap}
&\rho_0^{(1)}\p_t[\VV]-b_0^{(1)}\cdot\p\p_*^I\left({J^{(1)}}^{-1}b_0^{(1)}\cdot\p[\eta]\right)+\nabla_{A^{(1)}}[\QQ]\nonumber \\
=&~\p_*^I f_b+\underbrace{\p_*^If_{v}+C^{(1)}([Q])-J^{(1)}[\p_*^I,R^{(1)}]\p_t[v]-J^{(1)}[\p_*^{I},{J^{(1)}}^{-1}b_0^{(1)}\cdot\p]\left({J^{(1)}}^{-1}b_0^{(1)}\cdot\p[\eta]\right)}_{=:\GG},
\end{align}where $\GG$ satisfies $\|\GG\|_0\leq  P(\EE(t))[\EE](t)$.

Multiplying $[\VV]$ in \eqref{VVgap} and integrating by part, we still get the following terms as in previous sections
\begin{align}\label{EEgapeq}
\frac12\ddt\io\rho_0^{(1)}|[\VV]|^2\dy=&-\ig N_3[\QQ]J^{(1)}A^{(1)3i}[\VV]_i\dyy+\io\GG\cdot[\VV]\dy+\io\p_*^I f_b\cdot[\VV]\dy\nonumber\\
&+\io J^{(1)}[\QQ]\nabla_{A^{(1)}}\cdot[\VV]\dy-\io\p_*^I\left({J^{(1)}}^{-1}b_0^{(1)}\cdot\p[\eta]\right)\bp[\VV]\dy,
\end{align}where the second term on the right side can be directly bounded. For the third term, we can integrate $b_0^{(1)}\cdot\p$ by parts to avoid more than 6 derivatives falling on $[J^{-1}b_0]$. When $b_0^{(1)}\cdot\p$ falls on $\p_*^I [v]$, we write $[v]=\p_t\eta$ and integrate by parts in $\p_t$ to control this term.

Since most of the steps are identical to the previous sections, we no longer repeat all those details. Below, we show the details of some key steps that are slightly different from the previous sections, and we only take $\TP^6$-estimates and $\p_3^3$ estimates for examples.
\paragraph*{Interior cancellation structure in Section \ref{stat3}.} We take $\p_*^I=\TP^6$ for example. Plugging the expression of $[\VV]$ into the last term in \eqref{EEgapeq}, we get
\begin{equation}
\begin{aligned}
&-\io\TP^6\left({J^{(1)}}^{-1}b_0^{(1)}\cdot\p[\eta]\right)(b_0^{(1)}\cdot\p)[\VV]\dy\\
=&-\frac12\ddt\io J^{(1)}\left|\TP^6\left({J^{(1)}}^{-1}b_0^{(1)}\cdot\p[\eta]\right)\right|^2\dy+\frac12\io\p_tJ^{(1)}\left|\TP^6\left({J^{(1)}}^{-1}b_0^{(1)}\cdot\p[\eta]\right)\right|^2\dy\\
&-\io J^{(1)}\TP^6\left({J^{(1)}}^{-1}b_0^{(1)}\cdot\p[\eta]\right)\cdot\left[\TP^6\p_t,J^{(1)}\right]({J^{(1)}}^{-1}b_0^{(1)}\cdot\p[\eta])\dy\\
&+\io\TP^6\left({J^{(1)}}^{-1}b_0^{(1)}\cdot\p[\eta]\right)\cdot\left[\TP^6,b_0^{(1)}\cdot\p\right]([v])\dy\\
&+\io\TP^6\left({J^{(1)}}^{-1}b_0^{(1)}\cdot\p[\eta]\right)(b_0^{(1)}\cdot\p)\left(\TP^6\eta^{(1)}\cdot\nabla_{A^{(1)}} [v]-\TP^6\eta^{(1)}\cdot\nabla_{[A]}v^{(2)}-\TP^6[\eta]\cdot\nabla_{A^{(2)}} v^{(2)}\right)\dy\\
\lesssim&-\frac12\ddt\io J^{(1)}\left|\TP^6\left({J^{(1)}}^{-1}b_0^{(1)}\cdot\p[\eta]\right)\right|^2\dy+P(\EE^{(1)}(t),\EE^{(2)}(t))[\EE](t),
\end{aligned}
\end{equation}where we note that the third term on the right side contains the analogue of $K_{11}$ term defined in Section \ref{stat3}, that is, $\TP^6\p_t$ may fall on $J^{(1)}$. Here we already know $\|J^{(1)}(t,\cdot)\|_{8,*}\leq P(\EE(t))$ and thus this term can be directly controlled.

Similarly, the fourth term in \eqref{EEgapeq} will produce the energy term of $[q]$ as stated in Section \ref{stat3} plus the term
\[
\io J^{(1)}\TP^6({J^{(1)}}^{-1}b_0^{(1)}\cdot\p[\eta])({J^{(1)}}^{-1}b_0^{(1)}\cdot\p[\eta])\TP^6(\nabla_{A^{(1)}}\cdot [v])\dy,
\]which is also directly controlled by $P(\EE^{(1)}(t),\EE^{(2)}(t))[\EE](t)$ due to $\|A^{(1)}(t,\cdot)\|_{6,*}\lesssim \|\eta(t,\cdot)\|_{8,*}^2\leq P(\EE(t))$.

\paragraph*{Boundary energy in $\TP^6$-estimates.}We plug $[Q]|_{\Gamma}=0$ and $Q^{(i)}=0$ into the boundary integral to get
\begin{equation}
\begin{aligned}
-\ig[\QQ]N_3A^{(1)3i}[\VV]_i\dyy=&\ig N_3\p_3Q^{(1)}J^{(1)}A^{(1)3j}\TP^6[\eta]_j A^{(1)3i}[\VV]_i\dyy\\
&+\ig N_3\p_3 Q^{(1)}(\TP^6[\eta]_j A^{(1)3j}+\TP^6\eta_j^{(2)}[A]^{3j})A^{(1)3i}[\VV]_i\dyy,\\
=&-\frac12\ddt\ig(-N_3\p_3 Q^{(1)})|A^{(1)3i}\TP^6[\eta]_i|^2\dyy+\frac12\ig(-N_3\p_3 \p_tQ^{(1)})|A^{(1)3i}\TP^6[\eta]_i|^2\dyy\\
&+\ig N_3\p_3Q^{(1)}A^{(1)3j}\TP^6[\eta]_jA^{(1)3i}(\TP^6\eta_r[A]^{kr}\p_k v_i^{(1)}-\TP^6\eta_r^{(2)} A^{(2)kr}\p_k[v]_i)\dyy\\
&+\ig N_3\p_3 Q^{(1)}(\TP^6[\eta]_j A^{(1)3j}+\TP^6\eta_j^{(2)}[A]^{3j})A^{(1)3i}[\VV]_i\dyy\\
\lesssim&-\frac{c_0}{4}\left|A^{(1)3i}\TP^6[\eta]_i\right|_0^2+ P(\EE^{(1)}(0),\EE^{(2)}(0))[\EE](t).
\end{aligned}
\end{equation}where we use the Rayleigh-Taylor sign condition for $Q^{(1)}$ in the first term. The other terms can be controlled after integrating by parts in $\p_t$ or in $\TP$ and using the trace lemma: $|[v]|_5\lesssim\|[v]\|_{6,*}$ and $|\eta^{(i)}|_{7}\lesssim \|\eta^{(i)}\|_{8,*}$.

\paragraph*{Boundary terms in $\p_3^3$-estimates.} The boundary integral now reads
\begin{equation}
\begin{aligned}
-\ig N_3J^{(1)}[\QQ][\VV]_i A^{(1)3i}\dyy,
\end{aligned}
\end{equation}where $$[\FF]=\p_3^3 [f]-\p_3^3\eta^{(1)}\cdot\nabla_{A^{(1)}} [f]-\p_3^3\eta^{(1)}\cdot\nabla_{[A]}f^{(2)}-\p_3^3[\eta]\cdot\nabla_{A^{(2)}} f^{(2)}.$$ What we need to do is again to reduce one $\p_3$ falling on $[Q],[v]_i$ to a tangential derivative by repeatedly invoking system \eqref{CMHDLgap}. Here we only list the identities analogous to those in Section \ref{sect n4bdry}. We first have
\begin{align}
\p_3^3\eta^{(1)}_j A^{(1)3j}=\p_3^2(\underbrace{\p_3\eta^{(1)}_j A^{(1)3j}}_{=1})-[\p_3^2,A^{(1)3j}]\p_3\eta_{(j)}.
\end{align}Then the third component of the second equation in \eqref{CMHDLgap} gives
\begin{equation}
\begin{aligned}
J^{(1)}A^{(1)3k}\p_3^3[Q]=&\p_3^2(\nabla_{\hat{A}^{(1)}}^k [Q])-[\p_3^2,A^{(1)3k}]\p_3[Q]\\
=&\p_3^2\left(-\rho_0^{(1)}\p_t[v]^k+b_0^{(1)}\cdot\p({J^{(1)}}^{-1}b_0^{(1)}\cdot\p[\eta]^k)+f_{v}^k+f_b^k-\nabla_{[A]}^kQ^{(2)}\right)\\
&-\sum_{L=1}^2\p_3^2(A^{(1)Li}\TP_L Q)-[\p_3^2,A^{(1)3k}]\p_3[Q],
\end{aligned}
\end{equation}and thus the top-order derivative on $[Q],[v]$ becomes $\p_3^2\dd$ for some tangential derivative $\dd$. Note also that $\p_3^2[A]^{mk}\p_mQ^{(2)}=\p_3^2[A]^{3k}\p_3 Q^{(2)}$ due to $Q^{(2)}|_{\Gamma}=0$ and $A^{3l}$ only contains tangential derivative $\TP\eta$. The term $\p_3^2A^{(1)Li}$ can be directly controlled by $\|\eta^{(1)}\|_4^2\leq \EE(t)$, so we no longer needs the subtle cancellation structures introduced in section \ref{sect n4bdry}. Similarly, using the third equation of \ref{CMHDLgap}, we get
\begin{equation}
\begin{aligned}
A^{(1)3i}\p_3^3[v]_i=&\p_3^2(\dive_{A^{(1)}} [v])-[\p_3^2,A^{(1)3i}]\p_3[v]_i\\
=&\p_3^2\left(-\ff'(q^{(1)})\p_t[q]+f_q-\dive_{[A]}v^{(2)}\right)-\sum_{L=1}^2\p_3^2(A^{(1)Li}\TP_L [v]_i)-[\p_3^2,A^{(1)3i}]\p_3[v]_i,
\end{aligned}
\end{equation}where the term $-\p_3^2\dive_{[A]} v^{(2)}$ may have a term in which $\p_3^2$ falls on $\p_3[\eta]$, that is, $A^{(2)lr}\p_3^3[\eta]_rA^{(1)3i}\p_l v_i^{(2)}=A^{(1)3i}\p_3^3[\eta]\cdot\nabla_{A^{(2)}} v^{(2)}$ which cancels with the last term in $A^{(1)3i}[\VV]_i$.

After these reductions, the top-order terms on the boundary becomes the following form, which is controlled analogously to \eqref{IBIB7}. For $\lee I\ree=6$, we have
\begin{equation}\label{IBIB5}
\begin{aligned}
&\ig N_3(\p_*^{I-e_3}\dd [f])(\p_*^{I-e_3} \dd [g])h\dyy\\
=&\io(\p_3\p_*^{I-e_3}\dd [f])(\p_*^{I-e_3} \dd [g])h\dy+\io(\p_*^{I-e_3}\dd [f])(\p_3\p_*^{I-e_3} \dd [g])h\dy+\io(\p_*^{I-e_3}\dd [f])(\p_*^{I-e_3} \dd [g])\p_3h\dy\\
\overset{\dd}{=}&-\io(\p_3\p_*^{I-e_3}[f])(\p_*^{I-e_3} \dd^2 [g])h\dy+\io(\p_3\p_*^{I-e_3}[f])(\p_*^{I-e_3} \dd [g]) \dd h\dy\\
&-\io(\p_*^{I-e_3}\dd^2 [f])(\p_3\p_*^{I-e_3} [g])h\dy+\io(\p_*^{I-e_3}\dd [f])(\p_3\p_*^{I-e_3} [g]) \dd h\dy +\io(\p_*^{I-e_3}\dd [f])(\p_*^{I-e_3} \dd [g])\p_3h\dy\\
\lesssim&~\|[f]\|_{6,*}\|[g]\|_{6,*}\|h\|_{3},
\end{aligned}
\end{equation}where the terms in $h$ only have at most one derivative on each variable.

Define the energy functional for \eqref{CMHDLgap}
\begin{equation}
[\EE](t):=\|[\eta](t,\cdot)\|_{6,*}^2+\|[v](t,\cdot)\|_{6,*}^2+\|(J^{-1}b_0)^{(1)}\cdot\p[\eta](t,\cdot)\|_{6,*}^2+\|[q](t,\cdot)\|_{6,*}^2+\sum_{\lee I\ree = 6}\left|A^{(1)3i}\p_*^I[\eta]_i\right|_0^2.
\end{equation}
We can finally get the estimates for \eqref{CMHDLgap}
\begin{equation}
[\EE](t)\leq [\EE](0)+\int_0^T P(\EE^{(1)}(t),\EE^{(2)}(t))[\EE](t)\dt,
\end{equation}and thus by Gr\"onwall's inequality, there exists some $T_2\in[0,T_1]$ ($T_1>0$ is the time for the a priori bounds obtained in Theorem \ref{CMHDEE}) depending only on the initial data and $c_0$ such that
\[
\sup_{0\leq t\leq T_2}\EE(t)\leq P(\EE^{(1)}(0),\EE^{(2)}(0))[\EE](0)\leq P(\|[v_0],[b_0],[Q_0]\|_6)P(\||v_0^{(1)},v_0^{(2)}\|_8,\|b_0^{(1)},b_0^{(2)}\|_8,\|Q_0^{(1)},Q_0^{(2)}\|_8).
\]
Note that the energy inequality is linear in $[\EE](t)$ because \eqref{CMHDLgap} is a \textit{linear} system of $[v],[b],[Q]$. In particular, if the two given initial datum are equal, we must have $[\EE](0)=0$ and thus $[\EE](t)=0$ in $[0,T_2]$. This proves the uniqueness and continuous dependence on initial data provided the local-in-time solution exists.

\section{On the local existence of solutions}\label{sect LWP}
\subsection{Local existence theorem for smooth data satisfying compatibility conditions up to infinite order}\label{sect Nash}

As stated in Section \ref{statLWP}, we need to prove Theorem \ref{CMHDsmooth}, that is, a local existence theorem from $C^{\infty}$ data to $C^{\infty}$ solution. Following the Nash-Moser iteration scheme presented in Alinhac-G\'erard \cite{AlinhacNash} and Secchi \cite{SecchiNotes} (also adopted in \cite{Lindblad2005LWP1, Lindblad2005LWP2, TW2020MHDLWP,TW2021MHDSTLWP}), to solve a nonlinear system $L(U)=f$, we start with an approximate solution $U^a$ and then $U=U^a+V$ is a solution to $L(U)=f$ if we can prove $V$ solves $\mathbf{L}(V):=L(U^a+V)-L(U^a)=f^a$ where $f^a:=-L(U^a)$ with $V|_{t=0}=0$.
\begin{rmk}[Existence of smooth approximate solution]
In \cite[Lemma 4.1]{TW2020MHDLWP}, an approximate solution was constructed in Sobolev space. Here we can follow Lindblad \cite[Lemma 16.3]{Lindblad2005LWP2} to construct a smooth approximate solution by introducing the power series $Q^a(t,y):=\sum_k\chi(t/\eps_k)Q_{(k)}(y)t^k/k!$ where $Q_{(k)}:=\p_t^kQ|_{t=0}\in C^{\infty}(\bar\Omega)\cap H_0^1(\Omega)$ are defined in Appendix \ref{appendix data inf}, $\chi(\cdot)\in C_c^{\infty}(\R)$ is a smooth cut-off function which equals to 1 in $[-1,1]$ and vanishes outside $[-2,2]$, and $\eps_k>0$ are chosen suitably small such that the series converges in $H^N(\Omega)$ for any $N$. Then solve $v^a,\eta^a$ from the MHD system \eqref{CMHDL}.
\end{rmk}

To solve the increment $V$, we start with $V_0=0$ and inductively define $V_{n+1}=V_n+\delta V_n$ where $\delta V_n$ is the solution to the linearized problem $\mathbf{L}'(U^a+S_{\theta_n}V_n)(\delta V_n)=f_n$ with $S_{\theta_n}$ being the smoothing operator, $\lim_n\theta_n=+\infty$. By Taylor expansion, we have
\[
\mathbf{L}(V_{n+1})-\mathbf{L}(V_n)=\mathbf{L}'(U^a+V_n)(\delta V_n)+e_n'=\mathbf{L}'(U^a+S_{\theta_n}V_n)(\delta V_n)+e_n'+e_n'',
\]where $e_n'$ is the quadratic error produced by the expansion, and $e_n''$ is the substitution error produced by replacing the basic state $U^a+V_n$ by the smooth one $U^a+S_{\theta_n}V_n$.

When proving the convergence of $\sum_n\delta V_n$ in a certain Sobolev space (here we assume it is $H_*^s(\Omega)$), we need to start with the following induction hypothesis
\begin{quote}Induction hypothesis $(H_{n-1}):\forall 0\leq i\leq n-1,~\|\delta V_i\|_{s,*}\lesssim \eps\theta_i^{s-\alpha-1}\Delta_i$ with $\eps\ll 1$ and $\Delta_i:=\theta_{i+1}-\theta_i$,
\end{quote} and then prove a similar estimate for $\delta V_n$, that is, $\|\delta V_n\|_{s,*}\lesssim \eps\theta_n^{s-\alpha-1}\Delta_n$. In \cite{TW2020MHDLWP}, this induction step was proven in Lemma 4.14.

To prove the existence in $C^{\infty}$, we need to get an improved estimate in the induction, that is, we start with the induction hypothesis $(H_{n-1})$ above, then we need to prove the following ``improved estimate" which is better than $(H_n)$
\begin{quote} ``Improved estimate": $\|\delta V_n\|_{s,*}\lesssim \theta_n^{s-\alpha'-1}\Delta_n$ for some $\alpha'=\alpha+\gamma$ with $\gamma>0$ a fixed positive number.
\end{quote} Once this is done, then we can replace the induction hypothesis $(H_{n-1})$ by the improved one and repeat this again and again, and finally we will see $\sum_n\delta V_n$ converges in $C([0,T];H_*^{s+k\gamma})$ for any $k\in\N$ where $T>0$ is independent of $k$. See also the ``additional regularity" argument summarized in \cite[pp.152]{AlinhacNash}, \cite[Section 3.7]{SecchiNotes} and adapted by Lindblad \cite[(18.40)-(18.43)]{Lindblad2005LWP1}.

First, we find that, if there are only the first two error terms $e_n',e_n''$ in \cite[(4.26)-(4.27)]{TW2020MHDLWP}, then one can get the following estimates for $\delta V_n$ based on $(H_{n-1})$ above:
\[
\|\delta V_n\|_{s,*}\lesssim \Delta_n\left(\theta_n^{s-\alpha-1}\|f^a\|_{\alpha,*}+\eps^2\theta_n^{\zeta_2(s)-1}\right)+\Delta_n\left(\theta_n^{5-\alpha}\|f^a\|_{\alpha,*}+\eps^2\theta_n^{\zeta_2(6)-1}\right)\eps\theta_n^{(s+4-\alpha)_+}
\]where $\zeta_2(s):=\max\{(s+2-\alpha)_++12-2\alpha,s+8-2\alpha\}$,~$\alpha\geq 12,~\tilde{\alpha}:=\alpha+3, ~6\leq s\leq \tilde{\alpha}-2$. So the above estimates give us
\begin{equation}
\|\delta V_n\|_{s,*}\lesssim(\|f^a\|_{\alpha,*}+\eps^2)\theta_n^{s-\alpha-1}\Delta_n\Rightarrow\|\delta V_n\|_{s,*}\lesssim\eps\theta_n^{s-\alpha-1}\Delta_n
\end{equation}for $\eps>0$ and $\|f^a\|_{\alpha,*}/\eps$ sufficiently small. But if we compare the powers of $\theta_n$, we find that $\zeta_2(6)-1=\max\{3-\alpha,14-2\alpha\}\leq(5-\alpha)-2$ and $\zeta_2(s)-1\leq (s-\alpha-1)-2$. Based on this and the smoothness of $f$, we can get the improved estimates
\begin{equation}
\|\delta V_n\|_{s,*}\lesssim(\|f^a\|_{\alpha+2,*}+\eps^2)\theta_n^{s-\alpha-1-2}\Delta_n\Rightarrow\|\delta V_n\|_{s,*}\lesssim\theta_n^{s-\alpha-1-2}\Delta_n
\end{equation}for $\eps>0$ sufficiently small.

Then, starting from this new estimate, that is, replacing $\alpha$ by $\alpha'=\alpha+2$, we can get improved estimates for the errors $e_n',~e_n''$ and then the total error $E_n:=e_0+\cdots+e_{n-1}$ with $e_i:=e_i'+e_i''$, the source term $f_n$ for the linearized problem, and $\delta V_n$
\begin{align*}
\|e_i\|_{s,*}\lesssim \eps\theta_i^{\zeta_2'(s)-1}\Delta_i,~\|S_{\theta_n}e_{n-1}\|_{s,*}\lesssim\Delta_{n-1}\eps\theta_{n-1}^{\zeta_2'(s)-1}\\
\|(S_{\theta_n}-S_{\theta_{n-1}})E_{n-1}\|_{s,*}\lesssim\Delta_{n-1}\eps\theta_{n-1}^{\zeta_2'(s)-1},~\|(S_{\theta_n}-S_{\theta_{n-1}})f^a\|_{s,*}\lesssim\|f^a\|_{\alpha',*}\theta_n^{s-\alpha'-1}\Delta_n\\
\|f_n\|_{s,*}\lesssim\Delta_n(\theta_n^{s-\alpha'-1}\|f^a\|_{\alpha',*}+\eps\theta_n^{\zeta_2'(s)-1})\\
\|\delta V_n\|_{s,*}\lesssim \Delta_n\left(\theta_n^{s-\alpha'-1}\|f^a\|_{\alpha',*}+\eps\theta_n^{\zeta_2'(s)-1}\right)+\Delta_n\left(\theta_n^{5-\alpha'}\|f^a\|_{\alpha',*}+\eps\theta_n^{\zeta_2'(6)-1}\right)\theta_n^{(s+4-\alpha')_+},
\end{align*}where $\zeta_2'(s)$ is defined by replacing $\alpha$ with $\alpha'$ (and hence $\zeta_2'(s)\leq \zeta_2(s)-4$). Note that $\eps^2$ in the previous estimates now becomes $\eps$ because we use $\|fg\|_{s,*}\lesssim \|f\|_{s,*}\|g\|_{4,*}+\|f\|_{4,*}\|g\|_{s,*}$ and the $\eps$ arising from $\|f\|_{s,*}$ is now replaced by 1. We know that $\zeta_2'(s)-1\leq (s-\alpha'-1)-2$ and thus we obtain
\begin{equation}
\|\delta V_n\|_{s,*}\lesssim(\|f^a\|_{\alpha+2+2,*}+\eps)\theta_n^{s-\alpha'-1-2}\Delta_n\Rightarrow\|\delta V_n\|_{s,*}\lesssim\theta_n^{s-\alpha'-1-2}\Delta_n.
\end{equation}

Again we replace the induction hypothesis $(H_{n-1})$ by $\|\delta V_i\|_{s,*}\lesssim \theta_n^{s-\alpha''-1}\Delta_i$ for $i\leq n-1$ with $\alpha''=\alpha+2\times 2$ to proceed the Nash-Moser iteration. Repeat this again and again, we will get  $\|\delta V_i\|_{s,*}\lesssim \theta_n^{s-\alpha-2k-1}\Delta_i$ for $i\leq n$ and for any $k\in\N$. Hence, we can follow the argument in Lindblad \cite[(18.40)-(18.43)]{Lindblad2005LWP1} to show that the series $\sum_n\delta V_n$ converges in $C([0,T]; H_*^{s+2k}(\Omega))$ for any $k$ with $T$ independent of $k$ and similarly $\sum_n\p_t^r\delta V_n$ converges in $C([0,T]; H_*^{s-r+2k}(\Omega))$ for $0\leq r\leq s$. So the local existence and uniqueness in $C([0,T]\times C^{\infty}(\bar\Omega))$ is proven, provided that one can construct a smooth data satisfying the compatibility conditions up to infinite order, which will be achieved in Appendix \ref{appendix data inf}. The additional regularity in the time variable can be obtained by differentiating \eqref{CMHDL} by $\p_t$ repeatedly.

However, there are two extra error terms $e_n''',D_{n+\frac12}\delta\Psi_n$ in \cite[(4.26)-(4.27)]{TW2020MHDLWP}, which are produced because the free surface of $\Omega(t)=\T^2\times(-\infty, \varphi(t))$ is directly flattened by an explicit diffeomorphism instead of using Lagrangian coordinates.
\begin{itemize}
\item \textbf{$e_n'''$: modifications in the boundary conditions.} Under the setting of \cite{TW2020MHDLWP}, the kinematic boundary condition $\p_t \varphi=v\cdot N$ and the constraint $H\cdot N=0$ are involved in the equation. But the linearization breaks the structure of these two boundary conditions. So after adding the increment in each step of Nash-Moser iteration, extra modifications are required to guarantee these two boundary conditions hold for the basic state $V_{n+1}=V_n+\delta V_n$.
\item \textbf{$D_{n+\frac12}\delta\Psi_n$: Dropping the zero-th order term when replacing $\delta V_n$ by its ``good unknown" $\delta\dot{V}_n$ in the linearized problem.} To solve and prove the tame estimates for the linearized problem for $\delta V_n$, the authors of \cite{TW2020MHDLWP} replaced the variables $\delta V_n$ by the good unknown (without derivative) $\delta \dot{V}_n:=\delta V_n-\delta \Psi_n(\p_3 V_n/\p_3\Phi_n)$ and drop a zero-th order term to get the so-called effective linearized problem. This step produces the problematic error $D_{n+\frac12}\delta\Psi_n$ due to that dropped term. \textbf{This is a bad term, as it contributes to $\eps^2\theta_n^{s-\alpha-1}\Delta_n$ which prevents us improving the estimates of $\delta V_n$.}
\end{itemize}

Under the setting of Lagrangian coordinates, these two terms are not needed. For example, there were no such error terms in Lindblad \cite{Lindblad2005LWP1,Lindblad2005LWP2} where the local existence for smooth solutions to Euler equations are proved by using Nash-Moser iteration.

Indeed, when using Lagrangian coordinates, the material derivative becomes $\p_t$ and the kinematic boundary condition then becomes ``$\p_t$ is a tangential derivative". In other words, there is no description for the position of $\p\Omega(t)$ in Lagrangian coordinates. Instead, the information of free surface is reflected by $\eta$ which is defined as the flow map of velocity. Besides, Lemma \ref{b} shows that the magnetic field is completely determined by its initial data and the flow map, i.e., $b=J^{-1}\bp\eta$, and thus the MHD system \eqref{CMHDL} only includes the variables $\eta,v,q$. The boundary constraint is now just $b_0^3=0$ that has no dependence on time. Hence, we don't have to consider the propagation of this condition when doing the iteration. The formulation \eqref{CMHDL} does not affect the tame estimates, as we can still get the energy of $J^{-1}\bp\delta\eta$ together with $\delta v$ and $\delta q$ as in (2.4). In fact, the linearized momentum equation still has the form parallel to the nonlinear problem $\p_t\delta v-\bp^2\delta\eta+\pa\delta Q=\cdots$ (with $\rho, J$ omitted) and multiplying this by $\delta v$ and integrating by parts in section \ref{stat2}-\ref{stat3} gives energy estimates. When the variation operator $\delta$ falls on $J^{-1}$, we can use $\rho_0=RJ$ to get $\delta\rho_0=R\delta J+JR'(q)\delta q$ and the positivity of density $R$ and then $\delta J$ can be expressed in terms of $\delta q$. This shows why we can avoid the modification error $e_n'''$.

The error $D_{n+\frac12}\delta\Psi_n$ can also be avoided. In Lagrangian coordinates, the analogue of $\delta \dot{V}_n$ is equal to $\delta V_n-\delta\eta_n\cdot\pa V_n$. When doing tame estimates, we may still use $\delta\dot{V}_n$ to do calculation, but we finally derive the energy inequality for $\delta V_n$ instead of $\delta\dot{V}_n$. The reason is that their difference can be estimated by $\|\delta\eta_n\|_{s,*}\|\pa V_n\|_{L^{\infty}}\leq \|\pa V_n\|_{L^{\infty}}\int_0^T\|\delta v_n\|_{s,*}$. The advantange is that $\delta\eta_n$ has the same regularity as $\delta v_n$ so we don't have derivative loss for this term; while under the setting of \cite{TW2020MHDLWP}, the regularity for $\delta\Psi_n$ is not the same as $\p_t\delta\Psi_n$.

On the other hand, dropping the zero-th order term $\delta\eta_n\cdot\pa V_n$ when solving the linearized problem is not necessary. Indeed, the dropped term is a zero-th order term that can be moved to the right side, and the estimates for the ``effective" linearized problem (cf. \cite[(3.25)]{TW2020MHDLWP}) shows that the interior inhomogeneous term comes with an time integral. Thus, one can solve the linearized problem by using contraction mapping theorem in some $[0,T_0']$. This $T_0'$ may be smaller than $T_0$ obtained in \cite[Theorem 3.1]{TW2020MHDLWP}, but since the system is linear, it can be continued to the full time interval $[0,T_0]$.

In Lagrangian coordinates, we have $\delta\eta_n$ is the flow map of $\delta v_n$, and one can alternatively use the Galerkin method presented in Gu-Luo-Zhang \cite[Section 7.1]{GuLuoZhang2021MHDST} to prove the local existence of the linearized system. If we expand $\delta\eta$ to be $\sum_j Z_j(t)e_j(y)$ for some Galerkin basis $\{e_j\}$, then the zero-th order term dropped in \cite{TW2020MHDLWP} is just equal to $\sum_jZ_j(t)e_j(y)\cdot(\text{coefficients only involving basic state})$ and the velocity becomes $\sum_j Z_j'(t)e_j(y)$. The momentum equation is still an ODE involving $Z_j''(t),~Z_j(t)$ and terms involving magentic field and pressure. Thus, we no longer need to take into account the error $D_{n+\frac12}\delta\Psi_n$ as in \cite{TW2020MHDLWP}. Based the above discussion, we claim that Theorem \ref{CMHDsmooth} holds.

\subsection{Continuation of smooth solutions}\label{sect continue}

To prove the continuation criterion for smooth solution stated in Theorem \ref{CMHDcontinue}, we need to prove the energy inequality in the following form
\begin{equation}
\EE_m(t)\leq \EE_m(0)+\int_0^T P(\EE_{m-1}(t))\EE_m(t)\dt,
\end{equation}that is, the energy inequality for $\EE_m(t)$ is linear in $\EE_m(t)$ (the highest order terms). Once this energy inequality is proven, then Theorem \ref{CMHDcontinue} follows by using contradiction. Indeed, for a given $m$, if $T_m^*<+\infty$ and we still have $\EE_{m-1}(t)\leq M$ in $[0,T_m^*]$ for some constant $M>0$, then the above inequality shows that $\EE_m(t)\leq \EE_m(0)(1+P(M)te^{P(M)t})$ in $[0,T_m^*]$ and thus it still remains bounded, which contradicts with the maximality of $T_*^m$.

To verify the linearity of the highest order terms in the energy estimates, it suffices to analyze the commutators, either the terms in the modified good unknowns or the reduction of normal derivatives on the boundary, such that the highest order term is linear. For simplicity of notations, we only verify the case $m=8$ and $\p_*^I =\TP^8$ for the commutators appearing in the interior estimates, and $\p_*^I =\p_3^4$ for commutators arising in the reduction of normal derivatives on the boundary.

\paragraph*{Commutators $[\TP^8,f]\dd g$ for $f,g\in H_*^{8}(\Omega)$.} Here $\dd$ is a tangential derivative, such as $\p_t$ or $\bp$. Such commutators appear when the energy terms are produced. The terms included in such commutators have the form $\TP^N f\TP^{8-N}\dd g$ for $1\leq N\leq 8$. Indeed, when $1\leq N\leq 4$, we put $L^{\infty}(\Omega)$ norm on $f$ and $L^2(\Omega)$ norm on $g$. When $5\leq N\leq 8$, we put $L^{\infty}(\Omega)$ norm on $g$ and $L^2(\Omega)$ norm on $f$. By using the definition of anisotropic Sobolev space and the Sobolev embedding $H^2(\Omega)\hookrightarrow L^{\infty}(\Omega)$, we will find $$\|[\TP^8,f]\dd g\|_0\lesssim\|f\|_{8,*}\|g\|_{5,*}+\|f\|_{7,*}\|g\|_{6,*}+\|f\|_{6,*}\|g\|_{7,*}+\|f\|_{5,*}\|g\|_{8,*},$$ which is linear in $\|\cdot\|_{8,*}$ norm.

\paragraph*{The error term $I_3$ when producing the energy of $q$.} In \eqref{I3} and \eqref{I*3}, we have the following term
\[
\int_0^T\io\frac{JR'(q)}{\rho_0}\p_*^I\eta_p\hat{A}^{lp}\p_l Q\p_*^I\p_tq\dy\dt,
\]which is controlled after integrating by parts in $\p_t$. So, it introduces a term without time integral
\[
\io\frac{JR'(q)}{\rho_0}\p_*^I\eta_p\hat{A}^{lp}\p_l Q\p_*^Iq\dy\lesssim\eps\|q\|_{8,*}^2+\frac{1}{4\eps}\|\p_*^I\eta\|_0^2\left\|\frac{JR'(q)}{\rho_0}\hat{A}^{lp}\p_l Q\right\|_{L^{\infty}}^2.
\]
Note that the terms in $L^{\infty}(\Omega)$ norm only have at most one derivative, so this term is bounded by $\|\cdot\|_{6,*}$ norms of $\eta,\rho_0,Q$. Next using $\eta=\text{Id}+\int_0^Tv\dt$ gives the energy estimate that is linear in $\|\cdot\|_{8,*}$ norm.

\paragraph*{Commutators in modifided Alinhac good unknowns.} This is the most involved part in the paper. We take the case $\p_*^I=\TP^8$, i.e., the most difficult case, for an example. Recall the calculations in section \ref{ts8AGU} and we find that the control of $C_3,C_4,C_5$ is parallel to $C_1$ and the control of $C_6$ is parallel to $C_2$, so we only focus one the control of $C_1(f)$ and $C_2(f)$.

Recall that $C_1(f)=8\pa^i(\pa^p\TP f)\TP^7\eta_p-8([\TP^6,A^{lp}A^{mi}]\p_m\eta_p)\TP\p_l f$. In the first term, the top-order part has the form $P(\p\eta)\p^2\TP \eta\TP^7 \eta$, whose $L^2(\Omega)$ norm can be controlled by using $H^1(\Omega)\hookrightarrow L^6(\Omega)$
\begin{equation}
\begin{aligned}
\|\p^2\TP f\TP^7 \eta\|_0^2=&\io \p^2\TP f~\TP^7 \eta~\p^2\TP f~\TP^7 \eta\dy\\
\overset{\TP}{=}&-\io\p^2\TP^2 f~\TP^7 \eta~\p^2\TP f~\TP^6 \eta\dy-\io\p^2\TP f~\TP^8 \eta~\p^2\TP f~\TP^6 \eta\dy\\
\lesssim&~\|\p^2\TP^2f\|_{1}\|\p^2\TP f\|_{1}\|\TP^7\eta\|_0\|\TP^6\eta\|_1+\|\p^2\TP f\|_{1}^2\|\TP^8\eta\|_0\|\TP^6\eta\|_1\\
\lesssim&\|\eta\|_{8,*}\|\eta\|_{7,*}\|f\|_{7,*}\|f\|_{8,*}+\|\eta\|_{8,*}^2\|f\|_{7,*}^2\lesssim \EE_8(t)\EE_7(t).
\end{aligned}
\end{equation}
In the second term of $C_1(f)$, we need to control $\|[\TP^6,A^{lp}A^{mi}]\p_m\eta_p\|_0$, whose top-order part reads $P(\p\eta,\TP\eta)(\TP^{N}\p\eta)(\TP^{6-N}\p\eta)$ for $1\leq N\leq 6$. So it is controlled by
\[
P(\|\eta\|_{7,*})(\|\TP^6\p\eta\|_0\|\p\eta\|_{L^{\infty}}+\|\TP^5\p\eta\|_0\|\TP\p \eta\|_{L^{\infty}}+\|\TP^4\p\eta\|_0\|\TP^3\p\eta\|_{L^{\infty}})\leq P(\|\eta\|_{7,*})\|\eta\|_{8,*}
\]

The control of $C_2(f)$ is divided into three parts in section \ref{ts8AGU}, and $C_2(v)$, $C_{21}(Q)$ and $C_{22}(Q)$ are already controlled in our desired form. Now we analyze $C_{23}(Q)$ that has the form $\TP^N \hat{A}^{lp}\TP^{7-N}\p_l Q$ for $1\leq N\leq 6$. For $N=5,6$, we put $L^{2}(\Omega)$ norm on $A$ and $L^{\infty}(\Omega)$ norm on $Q$; and put $L^{\infty}(\Omega)$ norm on $A$ and $L^{2}(\Omega)$ norm on $Q$ for $1\leq N\leq 2$. When $n=3,4$, we need a observation that when $l=3$, $\hat{A}^{3p}=\TP\eta\times\TP\eta$ only contains tangential derivative; and when $l=1,2$, $\p_l$ itself is tangential. So when $N=4$, we have either $(\TP^4\p \eta\times \TP\eta)\TP^4 Q$ or $(\TP^5 \eta\times \TP\eta)\TP^3\p Q$  which is controlled by $$\|\TP^4\p\eta\|_0\|\p\eta\|_{L^{\infty}}\|\TP^4 Q\|_{L^{\infty}}+\|\TP^5\eta\|_{L^3}\|\p\eta\|_{L^{\infty}}\|\TP^3\p Q\|_{L^6}\leq \|\eta\|_{6,*}^2\|Q\|_{8,*}.$$

Similar approach applies to the modified good unknowns for other derivatives which shoule be easier than the case $\p_*^I=\TP^8$. So we show that, for a function $f\in H_*^8(\Omega)$ and its ``modified" good unknown $\FF$ with respect to $\p_*^I$, the following property holds
\[
\p_*^{I}(\pa^i f)=\pa^i\FF+C^i(f),\text{ with }\|C(f)\|_{8,*}\leq P(\EE_7(t))\EE_8(t),
\]where $\EE_m(t)$ is the energy functional defined in \eqref{EE}.

\paragraph*{The estimates for the modification terms $\Delta_f$ on $\Gamma$.} In the boundary estimates in section \ref{sect t8bdry}, we have to control the $L^2(\Gamma)$ norms of $\Delta_Q,\Delta_v$ and $\p_t(J\Delta_Q)$. Indeed, the analysis in \eqref{modifyt} and the Sobolev inequality $|f|_{W^{1,\infty}}\leq \|f\|_{5,*}$ have already make the energy estimates linear in the top-order term, that is,
\[
|\Delta_Q|_0+|\Delta_v|_0+|\p_t(J\Delta_Q)|_0\leq P(\EE_7(t))\EE_8(t).
\]

\paragraph*{Commutators arising in the reduction of normal derivatives.} When there is a normal derivative $\p_3$ included in $\p_*^I$, we need to repeatedly use the MHD system to reduce one normal derivative to one tangential derivative, which then produces lots of commutators. We take $\p_*^I=\p_3^4$ for an example. There are two types of commutators in section \ref{sect n4bdry} and we need to control their $|\cdot|_0$ norm.
\begin{itemize}
\item $[\p_3^3,A^{3i}]\p_3 f$. This is easy to control, it equals to $\p_3^3 A^{3i}\p_3 f+3\p_3^2A^{3i}\p_3^2 f+3\p_3A^{3i}\p_3^3 f$ and is then controlled by
\begin{align}
\|[\p_3^3,A^{3i}]\p_3 f\|\leq &~P(|\TP\eta,\TP\p\eta|_{L^{\infty}})\left(|\p_3^3\eta|_1|\p_3 f|_{L^{\infty}}+|\p_3^2\TP\eta|_{L^4}|\p_3^2 f|_{L^4}+|\p_3\TP\eta|_{L^{\infty}}|\p_3^3 f|_0\right) \nonumber\\
\leq &~P(\|\eta\|_{7,*})(\|\eta\|_{4}+\|f\|_{4}),
\end{align}where we use the fact that $A^{3i}=\TP\eta\times\TP\eta.$
\item $[\p_3^2, A^{Lp}A^{mi}]\p_3\p_m\eta_p$, $L=1,2$. The analysis is similar as above, because the top-order term has the form $(\p_3^3\eta)(\p_3^2\eta)P(\p\eta)$.
\end{itemize}

Summarizing the above analysis, we show that, for $m\geq 8$, the energy inequality is in fact
\[
\EE_m(T)\leq \EE_m(0)+P(\EE_{m-1}(T))\int_0^T\EE_m(t) P(\EE_{m-1}(t))\dt,
\]so we have proved a continuation criterion.

\subsection{Passing to the case of initial data satisfying compatibility conditions up to finite order}\label{sect limit}

Finally, we prove Theorem \ref{CMHDLWPm}. First we recall that, we say the initial data $(v_0,b_0,Q_0)$ satisfies the compatibility  conditions up to $k$-th order, if $Q_{j}:=\p_t^j Q|_{t=0}=0$ on $\Gamma$ holds for $0\leq j\leq k$; and we say $(v_0,b_0,Q_0)$ satisfies the compatibility conditions up to infinite order if $Q_{j}:=\p_t^j Q|_{t=0}=0$ on $\Gamma$ holds for all $j\geq0,~j\in\Z$.

 Given an integer $m\geq 8$ and initial data $U_0:=(v_0,b_0,Q_0)$ satisfying the compatibility conditions up to $(m-1)$-th order, assume we already find a sequence of smooth data $U_0^{(n)}:=(v_0^{(n)},b_0^{(n)},Q_0^{(n)})$ satisfying the compatibility conditions up to infinite order, such that $\|U_0^{(n)}-U_0\|_{m}\to 0$ as $n\to\infty$. Now we introduce the following procedure:
\begin{enumerate}
\item For each $n$, using Theorem \ref{CMHDsmooth}, we know there exists a unique smooth solution $U^{(n)}(t,\cdot)\in C^{\infty}([0,T^{(n)}];\Omega)$ for some $T^{(n)}>0$. The lifespan $T^{(n)}$ may depend on $n$ at this point.
\item By our a priori estimates (Theorem \ref{CMHDEE}), we have $\|U^{(n)}(t,\cdot)\|_{m,*}\leq P(\|U^{(n)}(0,\cdot)\|_{m,*})\leq P(\|U_0^{(n)}\|_{m})$ provided the existence and the right side is independent of $n$. Using the continuation criterion (Theorem \ref{CMHDcontinue}), we can extend the solution a bit more after $T^{(n)}$, until the a priori bounds become invalid. So, \textbf{the lifespan of $\{U^{(n)}(t)\}$ in $H_*^8(\Omega)$ has a lower bound $T_0$ independent of $n$}.
\item Using Theorem \ref{CMHDEE2}, we know $\|U^{(k)}(t,\cdot)-U^{(l)}(t,\cdot)\|_{m-2,*}\leq C(\|U_0^{(k)}-U_0^{(l)}\|_{m-2})P(\|U_0^{(k)},U_0^{(l)}\|_{m})$. Here $C(\cdot)>0$ is a continuous function of its arguments and $C(x)\to 0$ as $|x|\to 0$. This is because the equations for $U^{(k)}-U^{(l)}$ are \textbf{linear} in $U^{(k)}-U^{(l)}$, so $(\p_t^kU^{(k)}-\p_t^kU^{(l)})|_{t=0}$ can be expressed linearly in terms of $U_0^{(k)}-U_0^{(l)}$. When $k,l\to\infty$, the right side converges to zero, and then $\{U^{(n)}(t)\}$ has a limit $U(t)\in H_*^{m-2}(\Omega)$ in $[0,T_0]$.
\item The limit $U(t)$ must be a solution with the given initial data $U_0\in H^{m}(\Omega)$ by using Sobolev embedding. Since each $U^{(n)}(t)$ belogns to $H_*^{m}(\Omega)$ in $[0,T_0]$, we know by the a priori bounds, the limit $U(t)$ also satisifies $\|U(t)\|_{m,*}\leq P(\|U_0\|_{m})$ in some $[0,T_m]$ with $T_m$ only depending on $\|U_0\|_{m}$, $c_0$ in the Rayleigh-Taylor sign, and $A_0$ in the equation of state.
\end{enumerate}

Therefore, \textbf{under the assumption of Theorem \ref{CMHDLWPm}}, for each given data $U_0:=(v_0,b_0,Q_0)$ satisfying the compatibility conditions up to $(m-1)$-th order, we prove that there exists a unique solution to \eqref{CMHDL} in $C([0,T_m],H_*^m(\Omega))$ for some $T_m$ only depending on $\|U_0\|_{m}$, $c_0$ in the Rayleigh-Taylor sign, and $A_0$ in the equation of state. This solution also satisfies the a priori bounds, continuous dependence on initial data and uniqueness as stated in Theorem \ref{CMHDEE} and Theorem \ref{CMHDEE2}.

\begin{appendix}
\section{Construction of smooth data satisfying compatibility conditions up to infinite order}\label{appendix data}

In the appendix, we prove the existence of smooth data satisfying compatibility conditions up to infinite order. Assume $m\geq 8$ is an integer and we are given an initial data $(w_0,b_0,P_0)$ satisfying the compatibility conditions up to $(m-1)$-th order in $H^m(\Omega)$, where $P_0$ is the total pressure, while the fluid pressure is denoted by $p_0=P_0-\frac12|b_0|^2$. For simplicity of notations, we assume $R'(q)/R|_{R=1}=1$.

The initial constraints and compatibility conditions for $(w_0,b_0,P_0)$ are
\begin{itemize}
\item (Compatibility conditions) $P_{(j)}:=\p_t^j P|_{t=0}=0$ on $\Gamma$, $0\leq j\leq m-1$.
\item (Initial constraints) $\dive b_0=0$ in $\Omega$, $b_0^3|_{\Gamma}=0$, and the Rayleigh-Taylor sign condition $-\frac{\p P_0}{\p N}\geq c_0>0$ on $\Gamma$.
\end{itemize}

\subsection{Compatibility conditions in terms of initial data}\label{appendix data 1}

First we express the compatibility conditions in terms of $w_0,b_0,P_0$. The zero-th order compatibility condition is $P_0|_{\Gamma}=0$. To express the first-order compatibility condition, we use $P=p+\frac12|b_0|^2$, where $p$ is the fluid pressure, the continuity equation $\p_t p+\dive v=0$, and $\p_t b=b\cdot\nab v-b\dive v$ to get (we omit the coefficient $A$ as it equals to Id at $t=0$. The appearance of $A$ does not affect the essence of the proof.)
\begin{equation}\label{data order 1}
\dive w_0=-|b_0|^2\dive w_0+(\bar{b}_0\cdot\cnab)w_0\cdot b_0\text{ on }\Gamma,
\end{equation}and we define the right side of \eqref{data order 1} to be the functional $\MM_{-1}(w_0,b_0)$.
Next we take divergence in the momentum equation to get a wave equation of $P$
\begin{equation}\label{MHDwave}
\p_t^2 P-\Delta P=\p_t^2(\frac12|b|^2)+\nab_iw^j\nab_jw^i-\nab_ib^j\nab_jb^i,~~P_0|_{\Gamma}=0
\end{equation}and again use $\p_t b=b\cdot\nab v-b\dive v$ and $\p_t v\sim\bp b-\nab P$ to get
\[
\p_t^2 P-\Delta P=\MM_0(w_0,b_0,P_0)+\NN_0(w_0,b_0)~~\text{ on }\{t=0\},
\] where $\NN_0(w_0,b_0):=\nab_iw_0^j\nab_jw_0^i-\nab_ib_0^j\nab_jb_0^i$ and $\MM_0(w_0,b_0,P_0)$ is defined by
\[
\MM_0(w_0,b_0,P_0):=|b_0|^2\Delta P_0-(b_0\cdot\nab)^2 P_0+(b_0\cdot\nab)^2b_0\cdot b_0+\RR_0(w_0,b_0)
\]and $\RR_0(w_0,b_0)$ only contains the first-order derivative of $b_0$ and $v_0$
\[
\RR_0(w_0,b_0):=P_0(b_0)\left((\nab^{i_1} w_0)(\nab^{i_2} w_0)+(\nab^{j_1} b_0)(\nab^{j_2} b_0)\right),
\]where $P_0(b_0)$ is a polynomial of $b_0$ only contains cubic and quadratic terms, and $(i_1,i_2,j_1,j_2)=(1,1,0,0)\text{ or }(0,0,1,1)$.

We see that the 2nd-order compatibility condition $P_{(2)}:=\p_t^2 P|_{t=0}=0$ on $\Gamma$ is equivalent to (we use $b_0^3|_{\Gamma}=0$)
\begin{equation}\label{data order 2}
-\Delta P_0=\MM_0(w_0,b_0,P_0)+\NN_0(w_0,b_0,P_0)=|b_0|^2\Delta P_0-(\bar{b}_0\cdot\cnab)^2 P_0+(\bar{b}_0\cdot\cnab)^2b_0\cdot b_0+\RR(w_0,b_0) \text{ on }\Gamma.
\end{equation}

Taking time derivatives in the wave equation above repeatedly we get for $k \geq 1$
\[
P_{(k+2)}-\Delta P_{(k)}=\MM_k(w_0,b_0,P_0)+\NN_k(w_0,b_0,P_0)\text{ in }\Omega,
\]where, after long and tedious calculations, the functionals $\MM_k,\NN_k$ have the following form for $r\geq 1$
\begin{align}
k=2r-1,~~\MM_{2r-1}(w_0,b_0,P_0)=&-|b_0|^2\Delta^r\dive w_0+(\bar{b}_0\cdot\cnab)^2\Delta^{r-1}\dive w_0\nonumber \\
&+\sum_{l=2}^{r+1}\underbrace{b_0^{i_1}\cdots b_0^{i_{2l}}(\nabla^{2r+1}w_0)}_{<2^{l}\text{ terms}}+\RR_{2r-1}(w_0,b_0,P_0),\label{M odd}\\
k=2r,~~\MM_{2r}(w_0,b_0,P_0)=&~|b_0|^2\Delta^{r+1} P_0-(\bar{b}_0\cdot\cnab)^2\Delta^{r}P_0+\RR_{2r}(w_0,b_0,P_0),\nonumber\\
&+\sum_{l=2}^{r+1}\underbrace{(\bar{b}_0\cdot\cnab)^{r+2}(\nab^rb_0) b_0^{i_1}\cdots b_0^{i_{2l}}+(\bar{b}_0\cdot\cnab)^{2}(\nab^{2r} P_0) b_0^{j_1}\cdots b_0^{j_{2l}}}_{<2^{l}\text{ terms }}\label{M even};
\end{align}and the term $\RR_k$, where every top-order term has $(k+1)$-th order derivative, has the following form
\[
\RR_k(w_0,b_0,P_0)=P_k(b_0)\left(C^{k}_{i_1\cdots i_m,j_1\cdots j_n, k_1\cdots k_l}(\nab^{i_1} w_0)\cdots(\nab^{i_m} w_0)(\nab^{j_1} b_0)\cdots(\nab^{j_n} b_0)(\nab^{k_1} P_0)\cdots(\nab^{k_l} P_0)\right),
\]where $P_k(\cdot)$ is a polynomial of its arguments and the lowest power is 4 and the indices above satisfy
\begin{align*}
1\leq i_1,\cdots, i_m,j_1,\cdots, j_n\leq k+1, 0\leq k_1,\cdots, k_l \leq k+1,\\
i_1+\cdots+i_m+j_1+\cdots+j_n+k_1+\cdots+k_l=k+1.
\end{align*}
The term $\NN_k(w_0,b_0,P_0)$ has the following form
\begin{align*}
\NN_k(w_0,b_0,P_0)=&~P_{k,1}(b_0)(\nab^{1+2\lfloor\frac{k}{2}\rfloor}w_0)(\nab w_0)+P_{k,2}(b_0)(\nab^{2\lceil \frac{k}{2}\rceil} P_0)(\nab w_0)+P_{k,0}(b_0)(\nab^{k+1}b_0)(\nab w_0)\\
&+P'_k(b_0)D^{k}_{i_1\cdots i_m,j_1\cdots j_n, k_1\cdots k_l}\left((\nab^{i_1} w_0)\cdots(\nab^{i_m} w_0)(\nab^{j_1} b_0)\cdots(\nab^{j_n} b_0)(\nab^{k_1} P_0)\cdots(\nab^{k_l} P_0)\right),
\end{align*}where $P_{k,1}(\cdot), P_{k,2}(\cdot),P_k'(\cdot)$ are polynomials of their arguments and $P_{k,0}(\cdot)$ is a polynomial of its arguments and the lowest power is 2. The indices above satisfy
\begin{align*}
1\leq i_1,\cdots, i_m,j_1,\cdots, j_n\leq k, 0\leq k_1,\cdots, k_l \leq k,\\
i_1+\cdots+i_m+j_1+\cdots+j_n+k_1+\cdots+k_l=k+1.
\end{align*}
So the $k$-th compatibility condition can be equivalently written as
\begin{align}
\label{data order odd} k=2r+1,~~\Delta^r\dive w_0=&\sum_{j=0}^{r}\Delta^j(\MM_{2r-1-2j}(w_0,b_0,P_0)+\NN_{2r-1-2j}(w_0,b_0,P_0))\text{ on }\Gamma,\\
\label{data order even} k=2r,~~-\Delta^r P_0=&\sum_{j=0}^{r-1}\Delta^j(\MM_{2r-2-2j}(w_0,b_0,P_0)+\NN_{2r-2-2j}(w_0,b_0,P_0))\text{ on }\Gamma,
\end{align}where $\MM_{-1}(w_0,b_0):=|b_0|^2\dive w_0-(b_0\cdot\nab)w_0\cdot b_0$ and $\NN_{-1}:=0$.

\subsection{Regularization of the given data and recovery of compatibility conditions}

To construct a smooth data satisfying the compatibility conditions up to infinite order, the first step is to regularize the given data such that we get smooth functions. By the standard approximation of Sobolev function, we know for any given $\eps>0$, there exists $(w_0^\eps,b_0^\eps,P_0^\eps)\in C^{\infty}(\Omega)$ such that
\[
\|w_0^\eps-w_0,b_0^\eps-b_0,P_0^\eps-P_0\|_s<\eps.
\]

However, such smooth approximation does not preserve the boundary conditions, even for the vanishing boundary conditions for $P_0$ and $b_0^3$. So we need to recover the compatibility conditions up to the same order as the given data.

From now on, we assume
\begin{itemize}
\item $m=8$, that is, the given data satisfies the compatibility conditions \eqref{data order odd}-\eqref{data order even} up to 7-th order. This corresponds to the minimal requirement in Theorem \ref{CMHDEE}.
\item $\|b_0\|_{L^{\infty}(\bar\Omega)}<\delta_0<1$ where $\delta_0$ is a suitably small number to be determined: to absorb the terms containing $(k+2)$-th order derivative arising in $\MM_k$. Note that we do not need $\|b_0\|_{L^{\infty}(\bar\Omega)}$ to be arbitrarily small in the proof.
\end{itemize}

\subsubsection{Recovering the initial constraints}
The new data should also satisfy the initial constraints: divergence-free condition of magnetic field, vanishing normal component of magnetic field on the boundary and the Rayleigh-Taylor sign condition. The Rayleigh-Taylor sign condition still holds for $P_0^\eps$, as $-\p_3P_0^\eps$ is just a small perturbation of $-\p_3 P_0$. We then modify $b_0^\eps$. First, we introduce $\tilde{b}_0^\eps$ defined by
\begin{align}
\tilde{b}_0^{\eps,1}=b_0^{\eps,1},~~\tilde{b}_0^{\eps,2}=b_0^{\eps,2};~~-\Delta\tilde{b}_0^{\eps,3}=-\Delta b_0^{\eps,3}\text{ in }\Omega,~\tilde{b}_0^{\eps,3}=0\text{ on }\Gamma,
\end{align}and then $\tilde{b}_0^\eps\in C^{\infty}(\Omega)$ and the elliptic estimates imply $\|\tilde{b}_0^\eps-b_0\|_s\leq\|b_0^\eps-b_0\|_s+|0-0|_{s-0.5}=O(\eps)$. Next, we recover the divergence-free condition by introducing $\bz^\eps:=\tilde{b}_0^\eps+\nab\varphi$ with $\varphi$ determined by
\begin{align}
-\Delta\varphi=\dive \tilde{b}_0^\eps\text{ in }\Omega,~\p_3\varphi=0\text{ on }\Gamma.
\end{align}
With this modification, we now have $\dive \bz^\eps=\dive\tilde{b}_0^\eps+\Delta\varphi=0$ in $\Omega$, and $\bz^3|_{\Gamma}=0$ still holds thanks to the Neumann boundary condition $\p_3\varphi=0\text{ on }\Gamma$. So, $\bz^\eps$ is the desired magnetic field that we need, and it still satisfies a smallness assumption $\|\bz\|_{L^{\infty}(\bar{\Omega})}<2\delta_0$. We'll drop $\eps$ in $\bz$ for the sake of clean notations.

\subsubsection{Recovering the compatibility conditions up to $(m-1)$-th order}
Next we focus on the modification of $w_0^\eps,P_0^\eps$. After the regularization, we don't even know if $P_0^\eps=0$ on $\Gamma$ holds or not. So the first step is to recover the 0-th order compatibility condition $P_0|_\Gamma=0$. We define $\pp^{(1)}$ by
\begin{align}
-\Delta \pp^{(1)}=-\Delta P_0^\eps\text{ in }\Omega,~~\pp^{(1)}=0\text{ on }\Gamma.
\end{align}Since $P_0|_{\Gamma}=0$, we know
\[
\|\pp^{(1)}-P_0\|_s\leq\|P_0^\eps-P_0\|_s+|0-0|_{s-0.5}=O(\eps).
\]

Next we define $\ww^{(1)}$ to be the following function such that $(\ww^{(1)},\bz,\pp^{(1)})$ satisfies the compatibility condition up to first order: $\ww^{(1),1}=w_0^{\eps, 1},~\ww^{(1),2}=w_0^{\eps,2}$ and $\ww^{(1),3}$ is determined by the bi-harmonic system
\begin{equation}
\begin{cases}
\Delta^2\ww^{(1),3}=\Delta^2w_0^{\eps,3}~~&\text{ in }\Omega,\\
\ww^{(1),3}=w_0^{\eps,3},~~\p_3\ww^{(1),3}=-\TP_1 w_0^{\eps,1}-\TP_2 w_0^{\eps,2}+\MM_{-1}(\ww^{(1)},\bz)~~&\text{ on }\Gamma,
\end{cases}
\end{equation}where $\MM_{-1}(\ww^{(1)},\bz^{(1)})$ is given by \eqref{data order 1}. Note that the second boundary contidition only involves $\p_3\ww^{(1)}$ because the tangential components are the same of $w_0^\eps$. So the elliptic estimates give us
\begin{equation}
\begin{aligned}
\|\ww^{(1)}-w_0\|_s\lesssim&~\|\Delta^2w_0^\eps-\Delta^2 w_0\|_{s-4}+|w_0^\eps-w_0|_{s-0.5}+|\p_3\ww^{(1)}-\p_3 w_0|_{s-1.5}\\
\lesssim&~O(\eps)+|b_0|_{L^{\infty}}^2|\nab\ww^{(1)}-\nab w_0|_{s-1.5},
\end{aligned}
\end{equation}where the last term can be absorbed by the left side if we pick $\delta_0$ sufficiently small. Therefore, by the second boundary condition, we know $(\ww^{(1)},\bz,\pp^{(1)})$ satisfies the compatibility condition up to first order.

Again, we construct $\pp^{(2)}$ such that $(\ww^{(1)},\bz,\pp^{(2)})$ satisfies the compatibility condition \eqref{data order even} up to 2nd order. The new pressure is defined by the poly-harmonic system
\begin{equation}
\begin{cases}
-\Delta^3 \pp^{(2)}=-\Delta^3 \pp^{(1)}~~&\text{ in }\Omega,\\
\pp^{(2)}=\pp^{(1)}=0,~~\p_3\pp^{(2)}=\p_3\pp^{(1)}~~&\text{ on }\Gamma,\\
-\Delta\pp^{(2)}=\MM_0(\ww^{(1)},\bz,\PP^{(1)})+\NN_0(\ww^{(1)},\bz)~~&\text{ on }\Gamma,
\end{cases}
\end{equation}and thus
\begin{equation}
\begin{aligned}
\|\pp^{(2)}-P_0\|_s\lesssim&~\|\Delta^3\pp^{(1)}-\Delta^3 P_0\|_{s-6}+|\p_3\pp^{(1)}-\p_3 P_0|_{s-1.5}\\
&+|\MM_0(\ww^{(1)},\bz,\|\QQ_{(m+7)}^{(m-1)}\|_0+\|\QQ_{(m+6)}^{(m-1)}\|_1^{(1)})+\MM_0(\ww^{(1)},\bz)-\MM_0(w_0,b_0,P_0)-\NN_0(w_0,b_0)|_{s-2.5}\\
\lesssim&~O(\eps)+|b_0|_{L^{\infty}}^2|\p_3^2(\pp^{(2)}-P_0)|_{s-2.5},
\end{aligned}
\end{equation}where the last term is again absorbed by the left side if we choose $|b_0|\leq \delta_0$ to be suitably small. It should also be noted that, $\MM_{k}$ also has other terms containing $(k+2)$-th order derivative, but there are at least two derivatives appearing as $(\bar{b}_0\cdot\cnab)$, and thus we can replace $\pp^{(2)}$ with $\pp^{(1)}$ using the remaining boundary conditions.

Next we construct $\ww^{(2)}$ via the following system such that $(\ww^{(2)},\bz,\pp^{(2)})$ satisfies the compatibility condition \eqref{data order odd} up to 3rd order.
\begin{equation}
\begin{cases}
\Delta^4\ww^{(2),3}=\Delta^4\ww^{(1),3}~~&\text{ in }\Omega,\\
\p_3^j\ww^{(2),3}=\p_3^j\ww^{(1),3},(0\leq j\leq 2)~~&\text{ on }\Gamma\\
\Delta\dive\ww^{(2)}=\MM_1(\ww^{(2)},\bz,\pp^{(2)})+\NN_1(\ww^{(2)},\bz,\pp^{(2)})+\Delta\MM_{-1}(\ww^{(1)},\bz)~~&\text{ on }\Gamma,
\end{cases}
\end{equation}and similarly as above we can get
\begin{equation}
\begin{aligned}
\|\ww^{(2)}-w_0\|_s\lesssim&~\|\Delta^4\ww^{(1)}-\Delta^4 w_0\|_{s-8}+\sum_{j=0}^2|\p_3^j\ww^{(1)}-\p_3^jw_0|_{s-j-0.5}\\
&+|(\MM_1+\NN_1+\Delta\MM_{-1})(\ww^{(1)},\bz,\pp^{(2)})-(\MM_1+\NN_1+\Delta\MM_{-1})(w_0,b_0,P_0)|_{s-3.5}\\
\lesssim&~O(\eps)+|b_0|_{L^{\infty}}^2|\p_3^3(\ww^{(2)}-w_0)|_{s-3.5},
\end{aligned}
\end{equation}where the last term is again absorbed by the left side if we choose $|b_0|\leq \delta_0$ to be suitably small.

So, we can repeat the above procedures such that $\pp^{(m)}$ is determined by the poly-harmonic equation $\Delta^{2m-1}\pp^{(m)}=\Delta^{2m-1}\pp^{(m-1)}$ in $\Omega$ equipped with the boundary conditions $\p_3^j\pp^{(m)}=\p_3^j\pp^{(m-1)}$ on $\Gamma$ for $0\leq j\leq 2m-3$ and the compatibility condition \eqref{data order even} for the case $k=2m-2$ with $(w_0,b_0,P_0)$ replaced by $(\ww^{(m-1)},\bz,\pp^{(m)})$.

Similarly, $\ww^{(m)}$ is determined by $\ww^{(m),1,2}=\ww^{(m-1),1,2}$ and $\Delta^{2m}\ww^{(m),3}=\Delta^{2m}\ww^{(m-1),3}$ in $\Omega$ equipped with the boundary conditions $\p_3^j\ww^{(m),3}=\p_3^j\ww^{(m-1),3}$ on $\Gamma$ for $0\leq j\leq 2m-2$ and the compatibility condition \eqref{data order odd} for the case $k=2m-1$ with $(w_0,b_0,P_0)$ replaced by $(\ww^{(m)},\bz,\pp^{(m)})$.

Since the given rough data $(w_0,b_0,P_0)$ satisfies the compatibility conditions up to 7-th order, we stop the above procedure after we get $(\ww^{(4)},\bz,\pp^{(4)})$ which is a smooth data and also satisfies the compatibility conditions up to 7-th order. We rename this smooth data to be $(\vv,\bz,\qq)$. For any given $\eps>0$, we construct a smooth data $(\vv,\bz,\qq)$ that satisfies the compatibility conditions up to the same order as the given rough data $(w_0,b_0,P_0)$ and has the following approximation
\begin{equation}
\|\vv-w_0\|_8+\|\bz-b_0\|_8+\|\qq-P_0\|_8=O(\eps).
\end{equation}

\subsection{Extend the compatibility conditions up to infinite order}\label{appendix data inf}
\subsubsection{Formal constructions}

We then try to extend the initial data such that the compatibility conditions are fulfilled up to infinite order. First we briefly state some formal construction. Recall in section \ref{appendix data}, for a given data $(w_0,b_0,P_0)$, the corresonding solution satisfies the wave equation
\[
\p_t^2P-\Delta_b P=\MM_0(v,b,P)+\NN_0(v,b),~~\Delta_b:=(1+|b|^2)\Delta-(b\cdot\p)^2,
\]and $\MM_0(v,b,P):=\bp^2b\cdot b+\RR(v,b)$ where $\RR$ only contains the first-order derivatives of $b,v$. So if we start with an irrotational velocity $w_0=\nab\psi$ and define $P_{(-1)}:=-\psi$, then since $P=p+\frac12|b|^2$ we have
\[
\p_t P=\p_tp+b\cdot\p_t b=-\dive w+b\cdot((b\cdot\p) w-b\dive w)=-(1+|b|^2)\dive w+(b\cdot\p) w\cdot b,
\]which then gives, after restricting it to $\{t=0\}$
\[
-\Delta_bP_{(-1)}=-P_{(1)}+\bp \nab\psi\cdot b_0,
\]where the right side only depends on the given data of velocity and magnetic field. Taking more time derivatives and setting $t=0$ yields an infinite elliptic system of the form
\[
-\Delta_{\bz}P_{(k)}=-P_{(k+2)}+\NN_{k}'(P_{(-1)},\cdots, P_{(k-1)}),~~k\geq -1,
\]where $\NN_k'$ is a functional that only depends on the derivatives of its arguments and $b_0$ up to a certain order. This system has similar structure as in \cite[Lemma 16.1]{Lindblad2005LWP2} and thus can be solved in a similar manner. The only difference comes from the appearance of magnetic field, but $\bp$ is a tangential derivatiev and we can pick suitable $b_0$ such that its normal component vanishes in a neighborhood of the boundary.

\subsubsection{Full construction procedure}
For specific calculations, we now define the desired smooth data $(\vv^{\infty},\bz^\infty,\qq^\infty)$ by
\begin{equation}
\vv^{\infty}:=\vv-\nab\QQ_{(-1)}^{\infty},~~\bz^\infty:=\bz.
\end{equation}And after a long and tedious calculation, we find $\qq^\infty$ is determined by the following infinite elliptic system in $\Omega$, where $\lambda:=1+|\bz|^2$.
\begin{align}
\label{QQ-1 eq}-\lambda\Delta\QQ_{-1}^{\infty}=& -(\QQ_{(1)}^{\infty}-\QQ_{(1)})-\bzp^2 \QQ_{-1}^{\infty}+\NN'_{-1}(\bz,\QQ_{-1}^{\infty})\\
\label{QQ0 eq}-\lambda\Delta\QQ_0^{\infty}=&-\QQ_{(2)}^{\infty}-\bzp^2 \QQ_{0}^{\infty}+(\bz\cdot\nab)^2\bz\cdot\bz+\NN'_{0}(\bz,\vv^{\infty},\QQ_{-1}^{\infty})\\
\label{QQ1 eq}-\lambda\Delta\QQ_{(1)}^{\infty}=&-\QQ_{(3)}^{\infty}-\bzp^2 \QQ_{1}^{\infty}+(\bz\cdot\nab)^3\vv^{\infty}\cdot\bz-|\bz|^2(\bz\cdot\nab)^2(\nab\cdot\vv^{\infty})+\NN'_{1}(\bz,\vv^{\infty},\QQ_{-1}^{\infty},\QQ_0^{\infty}),\\
\label{QQ2 eq}-\lambda\Delta\QQ_{(2)}^{\infty}=&-\QQ_{(4)}^{\infty}-\bzp^2\Delta \QQ_{0}^{\infty}+\NN'_{2}(\bz,\vv^{\infty},\QQ_{-1}^{\infty},\QQ_0^{\infty},\QQ_{(1)}^{\infty})
\end{align}and for $k\geq 2$
\begin{align}
\label{QQk eq}-\lambda\Delta\QQ_{(k)}^{\infty}=&-\QQ_{(k+2)}^{\infty}-\bzp^2\Delta \QQ_{(k-2)}^{\infty}+\NN'_{k}(\bz,\vv^{\infty},\QQ_{-1}^{\infty},\QQ_0^{\infty},\QQ_{(1)}^{\infty},\cdots,\QQ_{(k-1)}^{\infty}),
\end{align}with vanishing boundary conditions for each $\QQ_{(k)}$. These $\NN_k'$'s have the following structure
\begin{equation}
\begin{aligned}
&~~~~\NN'_k(\vv^\infty,\bz,\QQ^\infty_{(-1)},\cdots,\QQ^{(\infty)}_{(k-1)})\\
&=P(\bz)C^{k;m_1\cdots m_r}_{i_1\cdots i_m,j_1\cdots j_n, k_1\cdots k_r}\left((\nab^{i_1} \vv^{\infty})\cdots(\nab^{i_m} \vv^{\infty})(\nab^{j_1} \bz)\cdots(\nab^{j_n} \bz)(\nab^{k_1} \QQ_{(m_1)})\cdots(\nab^{k_r} \QQ_{(m_l)})\right),
\end{aligned}
\end{equation}where the indices satisfy
\begin{align*}
i_1+\cdots+i_m+j_1+\cdots+j_n+(k_1+m_1)+\cdots+(k_r+m_r)=k+2,\\
1\leq i_1,\cdots, i_m,j_1,\cdots,j_m,k_1,\cdots,k_r\leq k+1,\\
-1\leq m_1,\cdots, m_r\leq k-1,~1\leq k_1+m_1, \cdots, k_r+m_r\leq k+1.
\end{align*}
\begin{rmk}
Note that \eqref{QQ2 eq} is derived from taking two time derivatives in \eqref{QQ0 eq}. On can also In fact, taking two time derivatives in \eqref{QQ0 eq}, we know the right side has top-order terms $\bzp^2 \QQ_{2}-(\bz\cdot\nab)^2\mathbf{b}_2\cdot\bz$. For the latter term $(\bz\cdot\nab)^2\mathbf{b}_2\cdot\bz$, we recall that $\QQ=\mathbf{q}+\frac12|\bz|^2$. Taking one time derivative and using continuity equation, we get $\QQ_{(1)}=-\dive\vv+\mathbf{b}_1\cdot\bz$. Taking one more time derivative and using the momentum equation $\mathbf{v}_1=\bzp\bz-\nab\QQ_0$, we get $\dive\mathbf{v}_1=\dive(-\nab \QQ_0+\bzp\bz)$. Using $\dive \bz=0$ we know $\dive\bzp\bz$ is of lower order. So we have $\QQ_{(2)}\sim \Delta\QQ_0+\mathbf{b}_2\cdot\bz$, and thus
\[
(\bz\cdot\nab)^2 \QQ_{2}-(\bz\cdot\nab)^2\mathbf{b}_2\cdot\bz=(\bz\cdot\nab)^2\Delta\QQ_0+\text{ lower order terms.}
\]

We choose to write in this form because it makes equations shorter. Alternatively one can differentiate \eqref{QQ1 eq} in time variable again and again to get the form $-\Delta_{\bz}\QQ_{(k)}^\infty=-\QQ_{(k+2)}^\infty+\MM_k'(\bz^\infty,\QQ_{(-1)},\cdots,\QQ_{(k-1)})+\NN'_k(\bz^\infty,\QQ_{(-1)},\cdots,\QQ_{(k-1)})$ where $\Delta_{\bz}:=(1+|\bz|^2)\Delta-\bzp^2$, and $\MM_k'$ denotes the terms containing $(k+2)$-th order derivative.
\end{rmk}

This elliptic system has a parallel structure as \cite[(16.11)]{Lindblad2005LWP2}.  Following \cite[Lemma 16.2]{Lindblad2005LWP2}, we impose the system with boundary conditions
\[
\QQ_{(k)}^\infty|_{\Gamma}=\QQ_{0,k},~~-\nab_N\QQ_{(k)}^\infty|_{\Gamma}=\QQ_{1,k},~~k\geq -1.
\]Then the system has a formal power series in the distance to the boundary
\[
\overline{\QQ}_{(k)}(r,\omega)\sim\sum \QQ_{n,k}(\omega)\frac{(1-r)^n}{n!},
\]where $r$ is the distance to the boundary and $\omega$ is the angular variable. Let $0\leq \chi(\cdot)\leq 1$ be a smooth bump function on $\R$ that equals to 1 in $[-1,1]$ and vanishes outside $[-2,2]$. Then, by \cite[Lemma 16.2]{Lindblad2005LWP2}, there exist $\eps_{k,n}$ such that
\[
\overline{\QQ}_{(k)}(r,\omega)=\sum\chi\left(\frac{1-r}{\eps_{k,n}}\right)\QQ_{n,k}(\omega)\frac{(1-r)^n}{n!},
\] such that the above elliptic system holds to infinite order on the boundary. Note that $\bzp$ is tangential on the boundary and $\bz$ has smallness assumption, so the extra terms involving $\bz$ will not affect the convergence of the power series.

Now let $(\tilde{\vv},\bz,\tilde{\qq})$ are functions that vanish to infinite order on the boundary. Define $$\ww^{\infty}:=\tilde{\vv}-\nab\overline{\QQ}_{(-1)},~\pp^\infty:=\tilde{\qq}+\overline{\QQ}_0,~\mathbf{P}_{(1)}:=-(1+|\bz|^2)(\dive\tilde{\vv}+\Delta\overline{\QQ}_{(-1)})+\bzp\ww^{\infty}\cdot\bz,$$ where $\overline{\QQ}_0,\overline{\QQ}_{(-1)}$ are given by the above construction. Then inductively one can show that $\mathbf{P}_{(k)}^\infty=\tilde{\qq}_{(k)}+\overline{\QQ}_{(k)}$, where $\overline{\QQ}_{(k)}$ are constructed above and $\tilde{\qq}_{(k)}$ vanishes to infinite order on the boundary. Hence, choosing boundary data such that $\QQ_{0,k}=0$ for $k\geq 0$ and $\QQ_{1,k}\geq c_0>0$, then the Rayleigh-Taylor sign condition for $\pp^\infty$ is fulfilled and also we have $\mathbf{P}^{\infty}_{(k)}|_{\Gamma}=0$ for all $k\geq 0$.

\end{appendix}

{\small

}
\end{document}